\def\H{{\cal H}}
\def\R{\mathbb{R}}
\def\H2{H^2(\R^N)}
\def\L2{L^2(\R^N)}
\def\to{\rightarrow}
\newcommand{\dx}{\,\mathrm{d}x}
\def\H{{\cal H}}
\def\H1{H^1(\R)}
 \newcommand{\Del}[1]{}
\numberwithin{equation}{section}
\newtheorem{thm}{Theorem}[section]
\newtheorem{cor}[thm]{Corollary}
\newtheorem{lem}[thm]{Lemma}
\newtheorem{prop}[thm]{Proposition}
\newtheorem{definition}[thm]{Definition}
\theoremstyle{remark}
\newtheorem*{exam*}{Examples}
\begin{document}

\setcounter{page}{1}

\title[Instability of solitary waves]{Instability of the solitary waves for the generalized Boussinesq equations}

\author{Bing Li}
\address{Center for Applied Mathematics\\
Tianjin University\\
Tianjin 300072, China}
\email{binglimath@gmail.com}
\thanks{Funding: The work of the first and third authors was partially supported by National
Natural Science Foundation of China grants 11771325 and 11571118.
The work of the second author
was partially supported by JSPS KAKENHI grant 15K04968.
The work of the third author was also partially supported by National Youth Topnotch Talent Support Program in China.
The work of the fourth author
was partially supported by Research Council of Norway (No. 250070).}

\author{Masahito Ohta}
\address{Department of Mathematics\\
Tokyo University of Science\\
Tokyo 162-8601, Japan}
\email{mohta@rs.tus.ac.jp}
\thanks{}

\author{Yifei Wu}
\address{Center for Applied Mathematics\\
Tianjin University\\
Tianjin 300072, China}
\email{yerfmath@gmail.com}
\thanks{* Corresponding author}

\author{Jun Xue*}
\address{Department of Mathematical Sciences\\
Norwegian University of Science and Technology\\
Trondheim 7491, Norway}
\email{jxuemath@hotmail.com}
\thanks{\href{https://doi.org/10.1137/18M1199198}{SIAM Journal on Mathematical Analysis, Volume 52, Issue 4, Pages 3192--3221}. This paper is the final version.}

\subjclass[2010]{Primary  35B35; Secondary 35L70}


\keywords{generalized Boussinesq equation, instability, critical frequency, traveling wave}

\maketitle

\begin{abstract}\noindent
In this work, we consider the generalized Boussinesq equation
\begin{align*}
\partial_{t}^2u-\partial_{x}^2u+\partial_{x}^2(\partial_{x}^2u+|u|^{p}u)=0,\qquad (t,x)\in\R\times \R,
\end{align*}
with $0<p<\infty$. This equation has the traveling wave solutions $\phi_\omega(x-\omega t)$, with the frequency $\omega\in (-1,1)$ and $\phi_\omega$ satisfying
\begin{align*}
-\partial_{xx}{\phi}_{\omega}+(1-{\omega^2}){\phi}_{\omega}-{\phi}_{\omega}^{p+1}=0.
\end{align*}
Bona and Sachs \cite{bosach-CMP-88} proved that the traveling wave $\phi_\omega(x-\omega t)$ is orbitally stable when $0<p<4,$ $\frac p4<\omega^2<1$. Liu \cite{Liu-JDDE-93} proved the orbital instability under the conditions $0<p<4,$ $\omega^2<\frac p4$ or $p\ge 4,$ $\omega^2<1$. In this paper, we prove the orbital instability  in the degenerate case $0<p<4,\omega^2=\frac p4$ .
\end{abstract}

\section{Introduction}
\vskip 0.2cm
In this paper, we consider the stability theory of the generalized Boussinesq equation
\begin{equation}\label{eq:GBQ}
\begin{split}
&\partial_{t}^2u-\partial_{x}^2u+\partial_{x}^2(\partial_{x}^2u+|u|^{p}u)=0,\qquad (t,x)\in\R\times \R,
\end{split}
\end{equation}
with the initial data
\begin{align}\label{eq:initialdata}
u(0,x) =u_0(x),\quad u_t(0,x) =u_1(x).
\end{align}
Here $0<p<\infty$.

The Boussinesq equation was originally derived by Boussinesq \cite{bous-JMPA-1872}. It arises from studying an approximation
to the evolution of the free surface of a water wave. 

Equation \eqref{eq:GBQ} has the solitary wave solution $u(x,t)=\phi_\omega(x-\omega t)$, where $\phi_\omega$ is the ground state solution of the following elliptic equation
\begin{align}\label{elliptic}
-\partial_{xx}{\phi}_{\omega}+(1-{\omega^2}){\phi}_{\omega}-{\phi}_{\omega}^{p+1}=0, \qquad |\omega|< 1.
\end{align}
The ground state solution $\phi_\omega$ is an even function and it has the property of exponential decay, that is, $|\phi_\omega|\leq C_1e^{-C_2|x|}$ for some $C_1,C_2>0$ and $|\partial_x\phi_\omega|\leq C_3e^{-C_4|x|}$ for some $C_3,C_4>0$.

Equation (\ref{eq:GBQ}) has the equivalent system form
 \begin{equation}\label{eq:ut}
   \left\{ \aligned
    &u_t=v_x,
    \\
    &v_t=(-u_{xx}+u-|u|^pu)_x.
   \endaligned
  \right.
 \end{equation}
Then the system \eqref{eq:ut} has the following solitary wave solution
\begin{equation*}
\left(\begin{array}{c}
u\\
v\\
\end{array}\right)
(t,x)=\left(\begin{array}{c}
{\phi}_{\omega}(x-{\omega t})\\
-{\omega}{\phi}_{\omega}(x-{\omega t})\\
\end{array}\right).
\end{equation*}
For the $H^1\times L^2$-solution $(u,v)^T$ of (\ref{eq:GBQ})--(\ref{eq:initialdata}), the momentum Q and the energy E are conserved under the flow, where
\begin{align}
Q\left(\begin{array}{c}
u\\
v\\
\end{array}\right)&=\int_{\R} uv\dx;\label{Momentum}\\
E\left(\begin{array}{c}
u\\
v\\
\end{array}\right)&=\frac{1}{2}\int_{\R} (|u_x|^2+|u|^2+|v|^2)\dx
-\frac{1}{p+2}\int_{\R} |u|^{p+2}\dx.\label{Energy}
\end{align}

There are several related results for the generalized Boussinesq equation. For a local existence result, Liu \cite{Liu-JDDE-93} proved that the system \eqref{eq:ut} is locally well-posed in $H^1(\mathbb R)\times L^2(\mathbb R)$. For the stability theories, Bona and Sachs \cite{bosach-CMP-88} proved that when $0<p<4,$ $\frac p4<\omega^2<1,$ the solitary wave solution is orbitally stable. Liu \cite{Liu-JDDE-93} proved the orbital instability if $0<p<4$ and $\omega^2<\frac p4$ or $p\ge 4$ and $\omega^2<1$. Liu \cite{Liu-SJMA-95} proved that when the wave speed $\omega=0$, the solitary wave solution is strongly unstable by blow-up.
Liu, Ohta, and Todorova \cite{LiuOhtaTod-AIPANL-07} showed that when $0<p<\infty$ and $0 < 2(p+2)\omega^2 <p,$ the solitary wave solution is strongly unstable by blow-up. For the abstract Hamiltonian systems, we refer the readers to Grillakis, Shatah, and Strauss \cite{GrShStr-87,GrShStr-90} for the stability/instability theories, in which the Vakhitov-Kolokolov stability criteria of the solitary waves were confirmed except the degenerate cases. In the degenerate cases, it was also proved by Comech and Pelinovsky \cite{CoPe-CPAM-03} (see also \cite{Ohta-JFA-11}) that the solitary wave solution is orbitally unstable under some regularity restrictions in the nonlinearity (for example, $p$ should be suitably large in our cases). In this paper, we consider the stability theory on the solitary wave solutions of the generalized Boussinesq equation and aim to show its instability in the degenerate cases without any regularity restriction.
It is worth noting that none of the above two frameworks of Grillakis, Shatah and Strauss \cite{GrShStr-87, GrShStr-90} and Comech and Pelinovsky \cite{CoPe-CPAM-03} are available in our cases, either because of the degeneration or because of insufficient regularity of the nonlinearity.

Before starting our theorem, we give some definitions. Let $v_0=\int_{-\infty}^x u_1(y)\,\mathrm dy$, $\vec u=(u,v)^T$, $\vec u_0=(u_0,v_0)^T$,  and  $\overrightarrow{\Phi_\omega}=(\phi_\omega,-\omega \phi_\omega)^T$.
For $\varepsilon>0$, we denote the set $ U_\varepsilon\big(\overrightarrow{\Phi_\omega}\big)$ as
\begin{align}\label{set:UPhi}
U_\varepsilon \big(\overrightarrow{\Phi_\omega}\big)=\{\vec u\in H^1(\R)\times L^2(\R):\inf_{y\in \R}\| \vec u-\overrightarrow{\Phi_\omega}(\cdot-y)\|_{H^1\times L^2}<\varepsilon\}.
\end{align}
\begin{definition}\label{def:stability}
We say that the solitary wave solution $\phi_\omega(x-\omega t)$ of \eqref{eq:GBQ} is orbitally stable
if for any $\varepsilon >0,$ there exists $\delta >0$ such that if $\|\vec u_0-\overrightarrow{\Phi_\omega}\|_{H^1\times L^2}< \delta$,
then the solution $\vec u(t)$ of \eqref{eq:GBQ} with $\vec u(0)=\vec u_0$ exists for all $t\in \mathbb R$,
and $\vec u(t)\in U_\varepsilon\big(\overrightarrow{\Phi_\omega}\big)$ for all $t\in \mathbb R$.
Otherwise, $\phi_\omega(x-\omega t)$ is said to be orbitally unstable.
\end{definition}
Then the main result in the present paper is the following.
\begin{thm}\label{thm:main}
Let $0<p<4$, $\omega\in (-1,1)$ and $\phi_\omega$ be the solution of \eqref{elliptic}. If $|\omega|=\sqrt{\frac{p}4}$,
then the solitary wave solution $\phi_\omega(x-\omega t)$ is orbitally unstable.
\end{thm}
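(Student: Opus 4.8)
The plan is to set the problem in the Hamiltonian/variational framework underlying Grillakis--Shatah--Strauss, to identify $\omega_c^2=p/4$ as a \emph{degenerate but non-flat} frequency (the second derivative of the action vanishes while the third does not), and then to run a contradiction argument combining modulation, the conservation laws, and a monotonicity (virial-type) functional tuned to the surviving third-order term.

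\emph{Step 1: the action function $d$.} I write \eqref{eq:ut} as $\partial_t\vec u=JE'(\vec u)$ with $J=\left(\begin{smallmatrix}0&\partial_x\\ \partial_x&0\end{smallmatrix}\right)$, set $\Phi_\omega=(\phi_\omega,-\omega\phi_\omega)^T$, and check $S_\omega'(\Phi_\omega)=0$ for $S_\omega=E+\omega Q$, so that $d(\omega):=S_\omega(\Phi_\omega)$ satisfies $d'(\omega)=Q(\Phi_\omega)$. Using the scaling $\phi_\omega(x)=(1-\omega^2)^{1/p}\phi_1(\sqrt{1-\omega^2}\,x)$ I get $d'(\omega)=-\omega(1-\omega^2)^{\frac2p-\frac12}\|\phi_1\|_{L^2}^2$, hence $d''(\omega)=-\|\phi_1\|_{L^2}^2(1-\omega^2)^{\frac2p-\frac32}\big(1-\tfrac4p\omega^2\big)$, which vanishes exactly at $\omega^2=p/4$ and reproduces the Bona--Sachs/Liu dichotomy away from it. Differentiating once more, $d'''(\omega_c)=\tfrac8p\,\omega_c(1-\omega_c^2)^{\frac2p-\frac32}\|\phi_1\|_{L^2}^2\neq0$, and by the reflection symmetry $x\mapsto-x$ (which sends $\omega\mapsto-\omega$) I may assume $\omega_c>0$, so $d'''(\omega_c)>0$.

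\emph{Step 2: spectral structure.} The linearization is $L_{\omega_c}=S_{\omega_c}''(\Phi_{\omega_c})=\left(\begin{smallmatrix}-\partial_{xx}+1-(p+1)\phi_{\omega_c}^p&\omega_c\\ \omega_c&1\end{smallmatrix}\right)$. Minimizing its quadratic form over the second component reduces it to the scalar operator $-\partial_{xx}+(1-\omega_c^2)-(p+1)\phi_{\omega_c}^p$, whose ground-state theory gives one negative eigenvalue, kernel $\mathrm{span}\{\phi_{\omega_c}'\}$, and a positive gap. Thus $L_{\omega_c}$ has a single negative eigenvalue and $\ker L_{\omega_c}=\mathrm{span}\{\partial_x\Phi_{\omega_c}\}$. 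Differentiating the soliton identities gives the Jordan chain $JL_{\omega_c}\partial_x\Phi_{\omega_c}=0$ and $JL_{\omega_c}\partial_\omega\Phi_{\omega_c}=-\partial_x\Phi_{\omega_c}$; crucially, the next link $JL_{\omega_c}\psi=-\partial_\omega\Phi_{\omega_c}$ is solvable iff $\langle Q'(\Phi_{\omega_c}),\partial_\omega\Phi_{\omega_c}\rangle=d''(\omega_c)=0$, i.e. precisely at the degenerate frequency. This extra generalized eigenvector $\psi$ encodes the (purely nonlinear) instability.

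\emph{Step 3: contradiction via modulation and monotonicity.} Suppose $\Phi_{\omega_c}$ is stable, so solutions with close data stay in $U_\varepsilon(\Phi_{\omega_c})$, and decompose $\vec u(t)=\Phi_{\omega(t)}(\cdot-y(t))+\vec\eta(t)$ with orthogonality conditions (e.g. $\langle\vec\eta,\partial_x\Phi_{\omega(t)}(\cdot-y)\rangle=\langle\vec\eta,Q'(\Phi_{\omega(t)})(\cdot-y)\rangle=0$) fixing $\omega(t),y(t)$ and keeping $\|\vec\eta\|$ small. Conservation of $Q$ and $S_{\omega_c}$, the expansions $Q(\Phi_\omega)=d'(\omega_c)+\tfrac12d'''(\omega_c)(\omega-\omega_c)^2+\cdots$ and $S_{\omega_c}(\Phi_\omega)=S_{\omega_c}(\Phi_{\omega_c})-\tfrac13d'''(\omega_c)(\omega-\omega_c)^3+\cdots$, and the (degenerate) coercivity of $\langle L_{\omega_c}\vec\eta,\vec\eta\rangle$ on the complement of the generalized kernel tie $\omega(t)-\omega_c$ to $\|\vec\eta\|$. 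I then choose data along $+\partial_\omega\Phi_{\omega_c}$ and build a virial/Lyapunov functional $\mathcal A(t)$ from $\psi$ (morally $\mathcal A(t)=\langle\vec\eta(t),L_{\omega_c}\psi\rangle$, or a localized momentum $\int\zeta(\cdot-y)\,uv\,dx$) whose derivative is, to leading order, a fixed-sign multiple of $(\omega(t)-\omega_c)$ driven by $d'''(\omega_c)>0$, with the remaining terms $O(\|\vec\eta\|^2)=o(|\omega(t)-\omega_c|)$. Monotonicity of $\mathcal A$ forces $\omega(t)$ to drift away from $\omega_c$, so $\inf_y\|\vec u(t)-\Phi_{\omega_c}(\cdot-y)\|\gtrsim|\omega(t)-\omega_c|$ eventually exceeds $\varepsilon$, contradicting stability.

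\emph{Main obstacle.} The crux is the monotonicity in Step 3. Because $d''(\omega_c)=0$, there is no exponentially growing linear mode and the driving term is genuinely cubic, so one must show the $d'''(\omega_c)$-term strictly dominates the quadratic errors. This needs a sharp coercivity estimate for $L_{\omega_c}$ after removing the negative direction \emph{and} the null direction $\partial_\omega\Phi_{\omega_c}$, a modulation choice that renders the dangerous quadratic terms orthogonal to the test vector $\psi$, and careful control of the nonlocal antiderivative $\partial_x^{-1}$ hidden in $J^{-1}$/the weight (handled via the exponential decay of $\phi_{\omega_c}$ and localization). Keeping the scales separated so that $|\omega(t)-\omega_c|\gg\|\vec\eta\|^2$ persists in time is the delicate point that makes the degenerate instability go through.
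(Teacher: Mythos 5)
Your Steps 1--2 are correct and correspond to the paper's preliminaries: your computation of $d''(\omega_c)=0$ is exactly Lemma \ref{lem:partialQ}, and the kernel, the single negative eigenvalue, and the vector solving $S_\omega''\big(\overrightarrow{\Phi_\omega}\big)\vec\psi_\omega=\overrightarrow{\Psi_\omega}$ are Lemmas \ref{lem:Ker}, \ref{lem:negative} and \ref{lem:S''=Psi}. The gap is in Step 3, in two concrete places. First, your choice of initial data ``along $+\partial_\omega\Phi_{\omega_c}$'' is wrong: up to an $O(a^2)$ error such data \emph{is} the nearby soliton $\overrightarrow{\Phi_{\omega_c+a}}$, which (since $d'''(\omega_c)>0$) lies on the \emph{stable} side of the family, $d''(\omega)>0$ for $\omega>\omega_c$, and whose orbit stays within $O(a)$ of the orbit of $\overrightarrow{\Phi_{\omega_c}}$; perturbing along the soliton family does not produce escape from the tube, so no contradiction with stability can come from it. The paper instead perturbs transversally to the family by dilation, $\vec u_0=(1+a)\overrightarrow{\Phi_\omega}$, and it is precisely this choice that makes the virial derivative contain the strictly positive constant $\rho(\vec u_0)\ge C_1a$ of Lemma \ref{lem:finalestimate}.

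Second, the monotonicity mechanism you propose --- $\mathcal A'(t)$ equal to a fixed-sign multiple of $\omega(t)-\omega_c$ up to errors $O(\|\vec\eta\|^2)=o(|\omega(t)-\omega_c|)$ --- is unsubstantiated, and the scale separation $\|\vec\eta\|^2\ll|\omega(t)-\omega_c|$ that you yourself flag as ``the delicate point'' is exactly what cannot be guaranteed: the conservation/coercivity bound (Lemma \ref{lem:uppercontrol}) only yields $\|\vec\eta\|_{H^1\times L^2}^2\lesssim a|\lambda-\omega|+a^2+o\big((\lambda-\omega)^2\big)$, so at any time when $|\omega(t)-\omega_c|\lesssim a^2$ your error terms are of the same size as your driving term, the sign of $\mathcal A'$ is lost, and nothing prevents $\omega(t)$ from returning to $\omega_c$; moreover a term linear in $\omega(t)-\omega_c$ has no fixed sign without a sign-persistence bootstrap that you do not supply. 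The paper's proof is structured to avoid both issues: the non-standard orthogonality conditions $\big\langle\vec\eta,\overrightarrow{\Gamma_\lambda}\big\rangle=\big\langle\vec\eta,\overrightarrow{\Psi_\lambda}\big\rangle=0$ (not the standard kernel/$Q'$ conditions you use) give the refined estimate of $\dot y-\lambda$ in Proposition \ref{prop:y-lambda} in terms of conserved momentum differences; with it, all first-order terms in $\vec\eta$ and in $\lambda-\omega$ cancel identically in $I'(t)$ (in particular $h_1'(\omega)=0$ in the proof of Lemma \ref{lem:finalestimate}), leaving $I'(t)\ge C_1a+C_2(\lambda-\omega)^2+O\big(a|\lambda-\omega|+a^2\big)+o\big((\lambda-\omega)^2\big)+O(1/R)$, which Young's inequality turns into the uniform bound $I'(t)\ge\tfrac14C_1a$ \emph{regardless of the sign or size of} $\lambda-\omega$. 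The driving term is thus a constant fixed by the initial data, not a linear term in $\omega(t)-\omega_c$, and no scale-separation or sign-persistence argument is needed. As written, your Step 3 does not close.
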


The main method that we use in the present paper is from \cite{Yifei-PRE}, in which the instability of the standing wave solutions of the Klein-Gordon equation in the degenerate cases was proved. Instead of construction of the Lyapunov functional, the argument in  \cite{Yifei-PRE} is to use the monotonicity of the virial quantity to control the modulations. However, the details of this argument depend sensitively on the problem, and the key ingredients of our proof are the following.

(1) The nonstandard modulation and coercivity properties are given. More precisely, define the functional $S_\omega$ as
\begin{align*}
S_\omega(\vec u)=E(\vec u)+\omega Q(\vec u).
\end{align*}
Inspired by \cite{MePa-InvenMath-04, MePa-Annal-05, Weinstein-CPAM-86}, we establish the following nonstandard coercivity properties.   We prove the existence of suitable directions $\overrightarrow{\Gamma_\omega},\overrightarrow{\Psi_\omega}\in H^1(\mathbb R)\times L^2(\mathbb R)$ such that the following coercivity properties hold. Suppose that $\vec \eta \in H^1(\R)\times L^2(\R)$ satisfies
\begin{align*}
\Big\langle\vec \eta, \overrightarrow{\Gamma_\omega}\Big\rangle
=\Big\langle\vec \eta ,\overrightarrow{\Psi_\omega}\Big\rangle=0;
\end{align*}
then
\begin{align*}
\Big\langle S_\omega''\big(\overrightarrow{\Phi_\omega}\big)\vec \eta, \vec \eta\Big\rangle
\gtrsim \big\|\vec \eta\big\|_{H^1\times L^2}^2.
\end{align*}
The choices of  $\overrightarrow{\Gamma_\omega},\overrightarrow{\Psi_\omega}$ play important roles in our estimation. $\overrightarrow{\Psi_\omega}$ can be regarded as the negative direction, which satisfies $\big\langle S_\omega''\big(\overrightarrow{\Phi_\omega}\big)\overrightarrow{\Psi_\omega}, \overrightarrow{\Psi_\omega}\big\rangle<0$. However, we remark that $\overrightarrow{\Gamma_\omega} \notin Ker(S_{\omega}''(\overrightarrow{\Phi_\omega}))$, which is much different from the standard.
Moreover, by suitably setting the translation and scaling parameters $y,\lambda$, we can establish the modulation by writing
$$
\vec u=\Big(\vec \eta+\overrightarrow{\Phi_{\lambda(t)}}\Big)(\cdot-y(t))
$$
such that $\vec \eta$ verifies similar orthogonal conditions above (by replacing $\overrightarrow{\Gamma_\omega}, \overrightarrow{\Psi_\omega}$ with $\overrightarrow{\Gamma_\lambda}, \overrightarrow{\Psi_\lambda}$, respectively).

(2) A subtle control on the modulated translation parameters is obtained.
Instead of the rough control of the modulation parameter $y$ as $\dot y-\lambda=O(\|\vec\eta\|_{H^1\times L^2})$,
we obtain the following finer estimate:
\begin{align*}
\dot y-\lambda=\|\phi_\lambda\|_{L^2}^{-2}\Big[Q\big(\overrightarrow{\Phi_\lambda}\big)-Q\big(\overrightarrow{\Phi_\omega}\big)\Big]
-\|\phi_\lambda\|_{L^2}^{-2}\Big[Q(\vec u_0)-Q\big(\overrightarrow{\Phi_\omega}\big)\Big]+O(\|\vec\eta\|_{H^1\times L^2}^2).
\end{align*}
The subtle estimate benefits from the choices of  $\overrightarrow{\Gamma_\omega},\overrightarrow{\Psi_\omega}$ in the first step and the dynamic of the solution.
This estimate has great effects when we set up the structure of virial identity $I'(t)$ in the following.

(3) The monotonicity of the virial quantity is constructed.  The key ingredient here is to suitably define a quantity $I(t)$ and obtain its monotonicity. To this end, the crucial issue is to prove the following structure of $I'(t)$ as
\begin{align*}
I'(t)=\rho(\vec u_0)+h(\lambda)+R(\vec u),
\end{align*}
where
\begin{align*}
&\qquad\rho(\vec u_0)\geq C_1a,\quad C_1>0;\\
h(\lambda)\geq C_2&(\lambda-\omega)^2+C_3a(\lambda-\omega)^2+o(\lambda-\omega)^2,\quad C_2>0,C_3>0,
\end{align*}
and $R(\vec u)$ is an easy remainder term which can be dominated by $\rho$ and $h$. Here $a$ is the difference between the initial data and the soliton. The obstacles in the proof come from nonconservation terms among $I'(t)$ and how to eliminate the first-order terms about $\vec\eta$ and $\lambda$. These make much technical complexity.
By a delicate analysis and the utilization of the  estimates above, we overcome all difficulties and  finally  obtain the monotonicity of $I(t)$.

The rest of the paper is organized as follows. In Section \ref{sec:preliminary}, we give some preliminaries. In Section \ref{sec:coercivity}, we show the coercivity property of the Hessian $S_\omega''\big(\overrightarrow{\Phi_\omega}\big)$. In Section \ref{sec:modulation}, we show the existence of modulation parameters. In Section \ref{sec:dynamic}, we control the modulation parameters obtained in Section \ref{sec:modulation}. In Section \ref{sec:virialid}, we show the localized virial identities. Finally, we prove the main theorem in Section \ref{sec:mainthm}.
\vskip 2cm

\section{Preliminary}\label{sec:preliminary}

\vskip 0.2cm

\subsection{Notations}

For $f,g\in L^2(\mathbb{R})=L^2(\mathbb{R,R})$, we define
$$\langle f,g\rangle=\int_{\mathbb{R}}f(x)g(x)\dx$$
and regard $L^2(\mathbb{R})$ as a real Hilbert space. Similarly, for $\vec f,\vec g\in \big(L^2(\mathbb{R})\big)^2=\big(L^2(\mathbb{R,R})\big)^2$, we define
$$\langle \vec f,\vec g\rangle=\int_{\mathbb{R}}\vec f(x)^T\cdot {\vec g(x)}\dx.$$

For a function $f(x)$, its $L^{q}$-norm $\|f\|_{L^q}=\Big(\displaystyle\int_{\mathbb{R}} |f(x)|^{q}\dx\Big)^{\frac{1}{q}}$
and its $H^1$-norm $\|f\|_{H^1}=(\|f\|^2_{L^2}+\|\partial_x f\|^2_{L^2})^{\frac{1}{2}}$. For $\vec f=(f,g)^T$, its $H^1\times L^2$-norm $\|\vec f\|_{H^1\times L^2}=(\|f\|^2_{H^1}+\|g\|^2_{L^2})^{\frac 12}$.

Further, we write $X \lesssim Y$ or $Y \gtrsim X$ to indicate $X \leq CY$ for
some constant $C>0$.  We use the notation $X \sim Y$ to denote $X
\lesssim Y \lesssim X$. We also use $O(Y)$ to denote any quantity $X$ such
that $|X| \lesssim Y$ and use $o(Y)$ to denote any quantity $X$ such
that $X/Y\to 0$ if $ Y\to 0$.
Throughout the whole paper, the letter $C$ will denote various positive constants
which are of no importance in our analysis.

\subsection{Some basic definitions and properties}
In the rest of this paper, we consider the case of $0<p<4$, and $\omega_c=\sqrt{\frac p4}$, $\omega=\pm \omega_c$.
Let $\vec u=(u,v)^T$, $\overrightarrow{\Phi_\omega}=(\phi_\omega,-\omega\phi_\omega)^T$. Recall the conserved equalities,
\begin{align*}
Q(\vec u)&=\int_\mathbb R uv\dx,\\
E(\vec u)&=\frac12(\|u\|_{L^2}^2+\| u_{x}\|_{L^2}^2+\|v\|_{L^2}^2)-\frac1{p+2}\|u\|_{L^{p+2}}^{p+2}.
\end{align*}
First, we give some basic properties on the momentum and energy.

\begin{lem}\label{lem:partialQ} Let $|\omega|=\sqrt \frac p4$; then the following equality holds:
\begin{align*}
\partial_\lambda Q\big(\overrightarrow{\Phi_\lambda}\big)\Big|_{\lambda=\omega}=0.
\end{align*}
\end{lem}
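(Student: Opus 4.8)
The plan is to reduce the statement to an explicit one-variable computation. First, from the definition $Q(\vec u)=\int_\R uv\dx$ together with $\overrightarrow{\Phi_\lambda}=(\phi_\lambda,-\lambda\phi_\lambda)^T$, one immediately has
\[
Q\big(\overrightarrow{\Phi_\lambda}\big)=-\lambda\,\|\phi_\lambda\|_{L^2}^2,
\]
so the claim amounts to determining the $\lambda$-dependence of $\|\phi_\lambda\|_{L^2}^2$ and showing that $\partial_\lambda\big[\lambda\|\phi_\lambda\|_{L^2}^2\big]$ vanishes at $\lambda=\omega$.

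To obtain that dependence in closed form, I would exploit the scaling structure of \eqref{elliptic}. Let $\phi$ denote the (unique, even, positive) ground state of $-\phi''+\phi-\phi^{p+1}=0$. A direct substitution shows that
\[
\phi_\lambda(x)=(1-\lambda^2)^{1/p}\,\phi\big(\sqrt{1-\lambda^2}\,x\big)
\]
solves \eqref{elliptic}, and by uniqueness of the ground state (up to translation) this is exactly $\phi_\lambda$. Changing variables $y=\sqrt{1-\lambda^2}\,x$ then gives
\[
\|\phi_\lambda\|_{L^2}^2=(1-\lambda^2)^{\frac2p-\frac12}\,\|\phi\|_{L^2}^2.
\]

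It remains to differentiate. Writing $\alpha=\tfrac2p-\tfrac12$ and $c_0=\|\phi\|_{L^2}^2$, we have $Q\big(\overrightarrow{\Phi_\lambda}\big)=-c_0\,\lambda(1-\lambda^2)^{\alpha}$, hence
\[
\partial_\lambda Q\big(\overrightarrow{\Phi_\lambda}\big)=-c_0(1-\lambda^2)^{\alpha-1}\big[\,1-(1+2\alpha)\lambda^2\,\big].
\]
Since $1+2\alpha=\tfrac4p$, the bracket equals $1-\tfrac4p\lambda^2$, which vanishes precisely when $\lambda^2=\tfrac p4$, i.e. at $\lambda=\omega$. This yields the claim.

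The computation itself is routine; the only point deserving care is the justification of the scaling identity, which rests on the uniqueness up to translation of the ground state of \eqref{elliptic}, so that the explicitly rescaled profile must coincide with $\phi_\lambda$. I would also emphasize that this computation pins down $\omega_c=\sqrt{p/4}$ as exactly the threshold at which the Vakhitov--Kolokolov quantity $\partial_\lambda Q\big(\overrightarrow{\Phi_\lambda}\big)$ changes sign, which is the precise origin of the degeneracy analyzed in the remainder of the paper.
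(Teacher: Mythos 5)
Your proposal is correct and follows essentially the same route as the paper: reduce $Q\big(\overrightarrow{\Phi_\lambda}\big)$ to $-\lambda\|\phi_\lambda\|_{L^2}^2$, invoke the scaling identity \eqref{rescaling} to get the explicit power $(1-\lambda^2)^{\frac2p-\frac12}$, differentiate, and observe the factor $1-\frac4p\lambda^2$ vanishes at $\lambda^2=\frac p4$. Your added remark that the scaling identity rests on uniqueness of the ground state up to translation is a sensible point of care that the paper leaves implicit, but it does not change the argument.
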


\begin{proof} Note that for $\lambda \in (-1,1)$, we have
\begin{align}
Q\big(\overrightarrow{\Phi_\lambda}\big)=-\lambda\|\phi_\lambda\|_{L^2}^2.\label{Qphi}
\end{align}
By rescaling, we find
\begin{align}\label{rescaling}
\phi_\lambda(x)=(1-\lambda^2)^{\frac{1}{p}}\phi_0\left(\sqrt{1-\lambda^2}x\right).
\end{align}
This implies that
\begin{align*}
Q\big(\overrightarrow{\Phi_\lambda}\big)=-\lambda(1-\lambda^2)^{\frac{2}{p}-\frac{1}{2}}\|\phi_0\|_{L^2}^2.
\end{align*}
By a straightforward computation, we have
\begin{align*}
\partial_\lambda Q\big(\overrightarrow{\Phi_\lambda}\big)=-(1-\lambda^2)^{\frac{2}{p}-\frac{3}{2}}\Big(1-\frac{4}{p}\lambda^2\Big)\|\phi_0\|_{L^2}^2.
\end{align*}
Finally, we substitute $\lambda^2=\frac p4$ into the equality above and thus complete the proof.
\end{proof}

Now we define the functional $S_\omega$ as
\begin{align}
S_\omega(\vec u)=E(\vec u)+\omega Q(\vec u).\label{eq:Su}
\end{align}
Then we have
\begin{align}
Q'(\vec u)&=\left(\begin{array}{c}
v\\
u
\end{array}\right),\label{eq:Q'u}\\
E'(\vec u)&=\left(\begin{array}{c}
-\partial_{xx}u+u-|u|^pu\\
v
\end{array}\right),\label{eq:E'u}\\
S_{\omega}'(\vec u)&=\left(\begin{array}{c}
-u_{xx}+u-|u|^pu+{\omega}v\\
v+{\omega}u\nonumber\\
\end{array}\right).
\end{align}
Note that $S_\omega'\big(\overrightarrow{\Phi_\omega}\big)=\vec0$. Moreover, for the real-valued vector $\vec f=(f,g)^T$, a direct computation shows
\begin{align}\label{eq:S''Phi}
S_\omega''\big(\overrightarrow{\Phi_\omega}\big)\vec f=\left(
                    \begin{array}{c}
                      -\partial_{xx}f+f-(p+1)\phi_\omega^pf+\omega g \\
                      g+\omega f \\
                    \end{array}
                  \right),
\end{align}
and for any vector $\vec \xi$, $\vec \eta$,
\begin{align*}
\Big\langle S_\omega''\big(\overrightarrow{\Phi_\omega}\big)\vec \xi,\vec \eta\Big\rangle=\Big\langle S_\omega''\big(\overrightarrow{\Phi_\omega}\big)\vec \eta,\vec \xi\Big\rangle.
\end{align*}
Moreover, taking the derivative of $S_\omega'\big(\overrightarrow{\Phi_\omega}\big)=\vec0$ with respect to $\omega$ gives
\begin{align}\label{eq:S''Phi-partialomega}
S_\omega''\big(\overrightarrow{\Phi_\omega}\big)\partial_\omega\overrightarrow{\Phi_\omega}=-Q'\big(\overrightarrow{\Phi_\omega}\big).
\end{align}
Then a consequence of Lemma \ref{lem:partialQ} is

\begin{cor}\label{SS}
 Let $\lambda\in (-1,1)$, $|\omega|=\omega_c$; then
\begin{align*}
S_{\lambda}\big(\overrightarrow{\Phi_{\lambda}}\big)-S_{\lambda}\big(\overrightarrow{\Phi_{\omega}}\big)=o\big((\lambda-\omega)^2\big).
\end{align*}
\end{cor}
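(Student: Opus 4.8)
The plan is to expand $S_\lambda(\overrightarrow{\Phi_\lambda})-S_\lambda(\overrightarrow{\Phi_\omega})$ as a function of $\lambda$ near $\lambda=\omega$ and show that its Taylor expansion vanishes up to and including the second order, which is exactly the content of $o((\lambda-\omega)^2)$. Since $\overrightarrow{\Phi_\omega}$ is a critical point of $S_\omega$, the natural first move is to differentiate the whole expression with respect to $\lambda$ and evaluate at $\lambda=\omega$. I would introduce the smooth function $g(\lambda):=S_\lambda(\overrightarrow{\Phi_\lambda})-S_\lambda(\overrightarrow{\Phi_\omega})$ and aim to prove $g(\omega)=0$, $g'(\omega)=0$, and $g''(\omega)=0$, from which Taylor's theorem with the appropriate remainder yields the claim.

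The first identity $g(\omega)=0$ is immediate. For the first derivative, differentiating using the chain rule gives
\begin{align*}
g'(\lambda)=\big\langle S_\lambda'(\overrightarrow{\Phi_\lambda}),\partial_\lambda\overrightarrow{\Phi_\lambda}\big\rangle
+Q(\overrightarrow{\Phi_\lambda})-Q(\overrightarrow{\Phi_\omega}),
\end{align*}
where I have used that $\partial_\lambda S_\lambda(\vec u)=Q(\vec u)$ from the definition \eqref{eq:Su}. At $\lambda=\omega$ the first term vanishes because $S_\omega'(\overrightarrow{\Phi_\omega})=\vec 0$, and the second term vanishes since both copies of $Q$ agree; hence $g'(\omega)=0$. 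The key structural input enters at second order. Differentiating once more and evaluating at $\lambda=\omega$, the terms involving $S_\omega'(\overrightarrow{\Phi_\omega})$ drop out, and what survives is essentially $\partial_\lambda Q(\overrightarrow{\Phi_\lambda})\big|_{\lambda=\omega}$ together with a symmetric Hessian contribution $\langle S_\omega''(\overrightarrow{\Phi_\omega})\partial_\omega\overrightarrow{\Phi_\omega},\partial_\omega\overrightarrow{\Phi_\omega}\rangle$. Using \eqref{eq:S''Phi-partialomega}, the latter equals $-\langle Q'(\overrightarrow{\Phi_\omega}),\partial_\omega\overrightarrow{\Phi_\omega}\rangle=-\partial_\lambda Q(\overrightarrow{\Phi_\lambda})\big|_{\lambda=\omega}$, so these contributions are proportional to $\partial_\lambda Q(\overrightarrow{\Phi_\lambda})\big|_{\lambda=\omega}$, which is precisely zero by Lemma \ref{lem:partialQ} at the critical frequency $|\omega|=\omega_c$. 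This is the heart of the degeneracy: it is the vanishing of $\partial_\lambda Q$ that forces the second-order term to disappear.

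I expect the main obstacle to be the careful bookkeeping of the second derivative, namely correctly separating the genuine second-order contribution from lower-order pieces that are killed by $S_\omega'(\overrightarrow{\Phi_\omega})=\vec 0$, and confirming that every surviving term reduces to a multiple of $\partial_\lambda Q(\overrightarrow{\Phi_\lambda})|_{\lambda=\omega}$. One must be attentive that the remainder is genuinely $o((\lambda-\omega)^2)$ rather than merely $O((\lambda-\omega)^2)$; this is why one shows the second derivative vanishes \emph{at} $\lambda=\omega$ and invokes continuity of $g''$ (equivalently, the smooth dependence of $\phi_\lambda$ on $\lambda$ via the rescaling \eqref{rescaling}), rather than trying to bound $g''$ uniformly. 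An alternative and perhaps cleaner route would bypass the direct Hessian computation: one can write $S_\lambda(\overrightarrow{\Phi_\lambda})-S_\lambda(\overrightarrow{\Phi_\omega})=[S_\lambda(\overrightarrow{\Phi_\lambda})-S_\omega(\overrightarrow{\Phi_\omega})]-[S_\lambda(\overrightarrow{\Phi_\omega})-S_\omega(\overrightarrow{\Phi_\omega})]$ and compute each bracket using $\tfrac{d}{d\lambda}S_\lambda(\overrightarrow{\Phi_\lambda})=Q(\overrightarrow{\Phi_\lambda})$ (from the criticality of $\overrightarrow{\Phi_\lambda}$ for $S_\lambda$) and $\partial_\lambda S_\lambda=Q$; both brackets then become integrals of $Q(\overrightarrow{\Phi_\cdot})$ or $Q(\overrightarrow{\Phi_\omega})$ over $[\omega,\lambda]$, and the difference is a second-order-vanishing quantity precisely because $\partial_\lambda Q(\overrightarrow{\Phi_\lambda})|_{\lambda=\omega}=0$.
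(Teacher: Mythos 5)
Your proposal is correct, but it takes a genuinely different route from the paper. The paper performs a Taylor expansion in the \emph{function-space} variable: it writes
$S_\lambda\big(\overrightarrow{\Phi_\lambda}\big)-S_\lambda\big(\overrightarrow{\Phi_\omega}\big)
= S_\omega\big(\overrightarrow{\Phi_\lambda}\big)-S_\omega\big(\overrightarrow{\Phi_\omega}\big)
+(\lambda-\omega)\Big(Q\big(\overrightarrow{\Phi_\lambda}\big)-Q\big(\overrightarrow{\Phi_\omega}\big)\Big)$,
expands $S_\omega$ to second order about its critical point $\overrightarrow{\Phi_\omega}$, substitutes
$\overrightarrow{\Phi_\lambda}-\overrightarrow{\Phi_\omega}=(\lambda-\omega)\partial_\omega\overrightarrow{\Phi_\omega}+o(\lambda-\omega)$,
and then kills the Hessian term via \eqref{eq:S''Phi-partialomega} and Lemma \ref{lem:partialQ}. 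You instead run a \emph{scalar} Taylor expansion of $g(\lambda)=S_\lambda\big(\overrightarrow{\Phi_\lambda}\big)-S_\lambda\big(\overrightarrow{\Phi_\omega}\big)$ at $\lambda=\omega$, and your bookkeeping for the second derivative does close up exactly as you anticipated: one finds
$g''(\omega)=\Big\langle S_\omega''\big(\overrightarrow{\Phi_\omega}\big)\partial_\omega\overrightarrow{\Phi_\omega},\partial_\omega\overrightarrow{\Phi_\omega}\Big\rangle+\Big\langle Q'\big(\overrightarrow{\Phi_\omega}\big),\partial_\omega\overrightarrow{\Phi_\omega}\Big\rangle+\partial_\lambda Q\big(\overrightarrow{\Phi_\lambda}\big)\Big|_{\lambda=\omega}
=(-1+1+1)\,\partial_\lambda Q\big(\overrightarrow{\Phi_\lambda}\big)\Big|_{\lambda=\omega}=0$,
so every surviving term is indeed a multiple of $\partial_\lambda Q\big(\overrightarrow{\Phi_\lambda}\big)\big|_{\lambda=\omega}$. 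Your closing alternative is in fact the cleanest of the three arguments: since $\overrightarrow{\Phi_s}$ is critical for $S_s$ for \emph{every} $s\in(-1,1)$, the first term in your formula for $g'$ vanishes identically, giving the exact identity $g'(s)=Q\big(\overrightarrow{\Phi_s}\big)-Q\big(\overrightarrow{\Phi_\omega}\big)$, whence
$g(\lambda)=\int_\omega^\lambda\Big[Q\big(\overrightarrow{\Phi_s}\big)-Q\big(\overrightarrow{\Phi_\omega}\big)\Big]\,\mathrm{d}s=\int_\omega^\lambda o(s-\omega)\,\mathrm{d}s=o\big((\lambda-\omega)^2\big)$,
using only Lemma \ref{lem:partialQ} and not \eqref{eq:S''Phi-partialomega} at all. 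As for what each approach buys: yours stays in one-variable calculus and makes the degeneracy transparent ($g''(\lambda)$ is essentially $\partial_\lambda Q\big(\overrightarrow{\Phi_\lambda}\big)$, which vanishes precisely at $|\omega|=\omega_c$), at the price of requiring $C^2$ dependence of $\lambda\mapsto\overrightarrow{\Phi_\lambda}$, which is available here through the explicit rescaling \eqref{rescaling}; the paper's expansion needs only first-order differentiability of $\lambda\mapsto\overrightarrow{\Phi_\lambda}$, and it keeps the Hessian $S_\omega''\big(\overrightarrow{\Phi_\omega}\big)$ at center stage, which is the object the rest of the paper (coercivity, modulation) is built around.
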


\begin{proof}
From the definition of $S_\omega(\vec u)$ in \eqref{eq:Su}, we have
\begin{align*} 
S_{\lambda}\big(\overrightarrow{\Phi_{\lambda}}\big)&-S_{\lambda}\big(\overrightarrow{\Phi_{\omega}}\big)
=S_{\omega}\big(\overrightarrow{\Phi_{\lambda}}\big)-S_{\omega}\big(\overrightarrow{\Phi_{\omega}}\big)
+(\lambda-\omega) \Big(Q\big(\overrightarrow{\Phi_{\lambda}}\big)-Q\big(\overrightarrow{\Phi_{\omega}}\big)\Big).
\end{align*}
Recall that $S_\omega'\big(\overrightarrow{\Phi_\omega}\big)=\vec0$;
then we use Taylor's expansion 
to calculate
\begin{align}
S_{\lambda}\big(\overrightarrow{\Phi_{\lambda}}\big)&-S_{\lambda}\big(\overrightarrow{\Phi_{\omega}}\big)\nonumber\\
=&
\frac12\Big\langle S_{\omega}''\big(\overrightarrow{\Phi_{\omega}}\big)\Big(\overrightarrow{\Phi_{\lambda}}-\overrightarrow{\Phi_{\omega}}\Big),
\Big(\overrightarrow{\Phi_{\lambda}}-\overrightarrow{\Phi_{\omega}}\Big)\Big\rangle\nonumber\\
&\hspace{1cm}+(\lambda-\omega) \Big(Q\big(\overrightarrow{\Phi_{\lambda}}\big)-Q\big(\overrightarrow{\Phi_{\omega}}\big)\Big)+o\big((\lambda-\omega)^2\big).\label{S-S}
\end{align}
Note that
$$
\overrightarrow{\Phi_{\lambda}}-\overrightarrow{\Phi_{\omega}}=(\lambda-\omega) \partial_\omega\overrightarrow{\Phi_\omega}+o(\lambda-\omega);
$$
then we find
\begin{align*}
\Big\langle S_{\omega}''\big(\overrightarrow{\Phi_{\omega}}\big)&\Big(\overrightarrow{\Phi_{\lambda}}-\overrightarrow{\Phi_{\omega}}\Big),
\Big(\overrightarrow{\Phi_{\lambda}}-\overrightarrow{\Phi_{\omega}}\Big)\Big\rangle\\
= &
(\lambda-\omega)^2\Big\langle S_{\omega}''\big(\overrightarrow{\Phi_{\omega}}\big)\partial_\omega\overrightarrow{\Phi_\omega},
\partial_\omega\overrightarrow{\Phi_\omega}\Big\rangle+o\big((\lambda-\omega)^2\big)\\
= &
-(\lambda-\omega)^2\Big\langle Q'\big(\overrightarrow{\Phi_\omega}\big),
\partial_\omega\overrightarrow{\Phi_\omega}\Big\rangle+o\big((\lambda-\omega)^2\big)\\
= &
-(\lambda-\omega)^2\partial_\lambda Q\big(\overrightarrow{\Phi_{\lambda}}\big)\Big|_{\lambda=\omega}+o\big((\lambda-\omega)^2\big),
\end{align*}
where we have used equality \eqref{eq:S''Phi-partialomega} in the second step.
Using Lemma \ref{lem:partialQ}, we have
$$
\partial_\lambda Q\big(\overrightarrow{\Phi_{\lambda}}\big)\Big|_{\lambda=\omega}=0.
$$
Hence,
$$
Q\big(\overrightarrow{\Phi_{\lambda}}\big)-Q\big(\overrightarrow{\Phi_{\omega}}\big)=o\big(\lambda-\omega\big),
$$
and
$$
\Big\langle S_{\omega}''\big(\overrightarrow{\Phi_{\omega}}\big)\Big(\overrightarrow{\Phi_{\lambda}}-\overrightarrow{\Phi_{\omega}}\Big),
\Big(\overrightarrow{\Phi_{\lambda}}-\overrightarrow{\Phi_{\omega}}\Big)\Big\rangle=o\big((\lambda-\omega)^2\big).
$$
Taking these two results into \eqref{S-S}, we obtain the desired estimate.
\end{proof}
\vskip 2cm

\section{Coercivity}\label{sec:coercivity}
\vskip 0.2cm

In this section, we prove a coercivity property on the Hessian of the action $S_{\omega}''\big(\overrightarrow{\Phi_\omega}\big)$.
First, we study 
the kernel of $S_{\omega}''\big(\overrightarrow{\Phi_\omega}\big)$ in the following lemma.
The proof is standard, and 
it is a consequence of the result from \cite{Weinstein-SJMA-85}.

\begin{lem}\label{lem:Ker} The kernel of $S_{\omega}''\big(\overrightarrow{\Phi_\omega}\big)$ satisfies that
\begin{align*}
Ker\Big(S_{\omega}''\big(\overrightarrow{\Phi_\omega}\big)\Big)=\big\{C\partial_x\overrightarrow{\Phi_\omega}:C\in \R \big\}. 
\end{align*}
\end{lem}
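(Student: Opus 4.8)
The plan is to compute the kernel of the operator
$S_\omega''(\overrightarrow{\Phi_\omega})$ explicitly by exploiting its block structure visible in \eqref{eq:S''Phi}. The key observation is that the equation $S_\omega''(\overrightarrow{\Phi_\omega})\vec f=\vec 0$ for $\vec f=(f,g)^T$ decouples in a useful way: the second component gives $g+\omega f=0$, i.e. $g=-\omega f$, and substituting this into the first component yields
\begin{align*}
-f_{xx}+f-(p+1)\phi_\omega^p f+\omega g
=-f_{xx}+(1-\omega^2)f-(p+1)\phi_\omega^p f=0.
\end{align*}
Thus the kernel is entirely determined by the scalar Schr\"odinger operator
$L_+ := -\partial_{xx}+(1-\omega^2)-(p+1)\phi_\omega^p$ acting on $f$, with $g$ then read off as $g=-\omega f$. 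This reduces the vector problem to identifying $\mathrm{Ker}(L_+)$ in $H^1(\R)$.

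First I would record this reduction precisely, noting that $\vec f\in \mathrm{Ker}(S_\omega''(\overrightarrow{\Phi_\omega}))$ if and only if $f\in\mathrm{Ker}(L_+)$ and $g=-\omega f$. Next I would invoke the classical nondegeneracy result for the ground state of the scalar elliptic equation \eqref{elliptic}: since $\phi_\omega$ satisfies $-\partial_{xx}\phi_\omega+(1-\omega^2)\phi_\omega-\phi_\omega^{p+1}=0$, differentiating in $x$ shows $L_+(\partial_x\phi_\omega)=0$, so $\partial_x\phi_\omega\in\mathrm{Ker}(L_+)$. The content of Weinstein's result \cite{Weinstein-SJMA-85} is the converse: $\mathrm{Ker}(L_+)=\{C\,\partial_x\phi_\omega : C\in\R\}$ is one-dimensional, spanned by the translation mode. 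This is exactly the nondegeneracy of the linearization about the ground state, which holds in one space dimension for this power nonlinearity.

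Combining the two facts gives the claim: any $\vec f$ in the kernel has $f=C\,\partial_x\phi_\omega$ and $g=-\omega f=-C\,\omega\,\partial_x\phi_\omega=C\,\partial_x(-\omega\phi_\omega)$, so
\begin{align*}
\vec f=C\,(\partial_x\phi_\omega,\,-\omega\,\partial_x\phi_\omega)^T=C\,\partial_x\overrightarrow{\Phi_\omega},
\end{align*}
which is precisely the asserted kernel. Conversely $\partial_x\overrightarrow{\Phi_\omega}$ lies in the kernel since differentiating $S_\omega'(\overrightarrow{\Phi_\omega})=\vec 0$ in $x$ (translation invariance) gives $S_\omega''(\overrightarrow{\Phi_\omega})\partial_x\overrightarrow{\Phi_\omega}=\vec 0$. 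The only genuinely nontrivial input is the scalar nondegeneracy $\dim\mathrm{Ker}(L_+)=1$, which is where I would lean entirely on \cite{Weinstein-SJMA-85}; the rest is the elementary algebra of the block reduction. Since the authors flag the proof as standard and cite Weinstein, I expect the main obstacle to be purely expository — making the reduction to $L_+$ clean and citing the nondegeneracy in the correct normalization — rather than any analytic difficulty.
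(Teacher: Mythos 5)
Your proposal is correct and follows essentially the same route as the paper: both reduce the kernel equation via the second component $g=-\omega f$ to the scalar operator $-\partial_{xx}+(1-\omega^2)-(p+1)\phi_\omega^p$, invoke Weinstein's nondegeneracy result \cite{Weinstein-SJMA-85} to conclude $f=C\,\partial_x\phi_\omega$, and verify the reverse inclusion by differentiating the elliptic equation \eqref{elliptic} in $x$ (your appeal to translation invariance is the same computation phrased abstractly).
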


\begin{proof}
First, we need to show the relationship ``$\supset$". For any $\vec f \in \big\{C\partial_x\overrightarrow{\Phi_\omega}:C\in \R\big\}$, 
using \eqref{elliptic}, we have
\begin{align}
S_{\omega}''\big(\overrightarrow{\Phi_\omega}\big)\vec f=S_{\omega}''\big(\overrightarrow{\Phi_\omega}\big)\big(C\partial_x\overrightarrow{\Phi_\omega}\big)=C\left(\begin{array}{c}
\partial_x\big(-\partial_{xx}{\phi}_{\omega}+(1-{\omega}^2){\phi}_{\omega}-{\phi}_{\omega}^{p+1}\big)\\
-{\omega}{\phi}_{\omega}'+{\omega}{\phi}_{\omega}'\\
\end{array}\right)=\vec 0.\label{ker1}
\end{align}
Then \eqref{ker1} implies that $\vec f$ is in the kernel of $S_{\omega}''\big(\overrightarrow{\Phi_\omega}\big)$, and we have the conclusion
$$
Ker\Big(S_{\omega}''\big(\overrightarrow{\Phi_\omega}\big)\Big)\supset\big\{C\partial_x\overrightarrow{\Phi_\omega}:C\in \R \big\}. 
$$

Second, we prove the reverse relationship ``$\subset$". For any $\vec f \in Ker\Big(S_{\omega}''(\overrightarrow{\Phi_\omega})\Big)$,  by the expression of $S_{\omega}''\big(\overrightarrow{\Phi_\omega}\big)$ in \eqref{eq:S''Phi}, we have
\begin{align}\label{Kereq}
 \left\{\begin{aligned}
           -\partial_{xx}f+(1-{\omega}^2)f-(p+1){\phi}_{\omega}^pf&=0,\\
           g+{\omega}f&=0.
        \end{aligned}
 \right.
\end{align}
By the work of Weinstein \cite{Weinstein-SJMA-85}, the only solutions to (\ref{Kereq}) are
\begin{align*}
\left\{\begin{aligned}
           f&=C\partial_x{\phi}_{\omega},\\
           g&=-C{\omega}\partial_x{\phi}_{\omega},
        \end{aligned} \qquad C\in \R. 
 \right.
 \end{align*}
This implies that $\vec f \in \big\{C\partial_x\overrightarrow{\Phi_\omega}:C\in \R\big\}$, 
and we have
$$
Ker\Big(S_{\omega}''\big(\overrightarrow{\Phi_\omega}\big)\Big)\subset\big\{C\partial_x\overrightarrow{\Phi_\omega}:C\in \R \big\}. 
$$
Finally, combining the two relationship gives us
\begin{align*}
Ker\Big(S_{\omega}''\big(\overrightarrow{\Phi_\omega}\big)\Big)=\big\{C\partial_x\overrightarrow{\Phi_\omega}:C\in \R \big\}. 
\end{align*}
This gives the proof of the lemma.
\end{proof}

The second lemma is
the uniqueness of the negative eigenvalue of $S_{\omega}''\big(\overrightarrow{\Phi_\omega}\big)$.
\begin{lem}\label{lem:negative} $S_{\omega}''(\overrightarrow{\Phi_\omega})$ exists only one negative eigenvalue.
\end{lem}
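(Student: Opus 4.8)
The plan is to reduce the spectral count for the matrix operator $S_\omega''\big(\overrightarrow{\Phi_\omega}\big)$ to that of a scalar Schr\"odinger operator by completing the square in the second component. Writing $\vec f=(f,g)^T$ and using \eqref{eq:S''Phi}, a direct computation gives
\[
\Big\langle S_\omega''\big(\overrightarrow{\Phi_\omega}\big)\vec f,\vec f\Big\rangle
=\int_{\R}\Big(|\partial_x f|^2+f^2-(p+1)\phi_\omega^p f^2+g^2+2\omega fg\Big)\dx .
\]
Completing the square in $g$, i.e. using $\int_\R(g^2+2\omega fg)\dx=\|g+\omega f\|_{L^2}^2-\omega^2\|f\|_{L^2}^2$, this becomes
\[
\Big\langle S_\omega''\big(\overrightarrow{\Phi_\omega}\big)\vec f,\vec f\Big\rangle=\langle Lf,f\rangle+\|g+\omega f\|_{L^2}^2,\qquad L:=-\partial_{xx}+(1-\omega^2)-(p+1)\phi_\omega^p,
\]
where $L$ is exactly the scalar operator appearing in \eqref{Kereq}. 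By the result of Weinstein \cite{Weinstein-SJMA-85} (together with standard Sturm--Liouville theory, using that $\partial_x\phi_\omega$ spans $\ker L$ and has a single zero, so $0$ is its second eigenvalue), $L$ has precisely one negative eigenvalue, which is simple, and the remainder of its spectrum is nonnegative.

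Second, I would record that the negative spectrum of $S_\omega''\big(\overrightarrow{\Phi_\omega}\big)$ is finite and discrete, so that counting negative eigenvalues is equivalent to computing the maximal dimension of a subspace on which the quadratic form above is negative definite. Since $\phi_\omega^p$ decays exponentially, $S_\omega''\big(\overrightarrow{\Phi_\omega}\big)$ is a relatively compact perturbation of its constant-coefficient limit, whose symbol $\big(\begin{smallmatrix}\xi^2+1 & \omega\\ \omega & 1\end{smallmatrix}\big)$ has smallest eigenvalue bounded below by $1-|\omega|>0$; hence the essential spectrum is contained in $(0,\infty)$, and everything negative is an eigenvalue of finite multiplicity. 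The number of negative eigenvalues then equals the Morse index of the form, by the min--max principle.

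Finally, I would compare Morse indices through the identity above. For the lower bound, taking $f=\chi$ the ground state of $L$ and $g=-\omega\chi$ gives form value $\langle L\chi,\chi\rangle<0$, so there is at least one negative eigenvalue. For the upper bound, suppose the form is negative definite on a subspace $W\subset H^1(\R)\times L^2(\R)$. The bound $\langle Lf,f\rangle\le\big\langle S_\omega''\big(\overrightarrow{\Phi_\omega}\big)\vec f,\vec f\big\rangle<0$ forces $\langle Lf,f\rangle<0$, in particular $f\neq0$, for every nonzero $(f,g)\in W$; moreover the projection $(f,g)\mapsto f$ is injective on $W$, since a nonzero element of the form $(0,g)\in W$ would have form value $\|g\|_{L^2}^2>0$, contradicting negativity. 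Thus $L$ is negative definite on a subspace of $H^1(\R)$ of dimension $\dim W$, whence $\dim W\le 1$. Combining the two bounds shows $S_\omega''\big(\overrightarrow{\Phi_\omega}\big)$ has exactly one negative eigenvalue. I expect the main subtlety to be the second step: the substitution $g\mapsto g+\omega f$ is not orthogonal, so the equality of negative eigenvalue counts cannot be read off from unitary equivalence and must instead be argued variationally, which in turn requires knowing a priori that the essential spectrum stays bounded away from zero.
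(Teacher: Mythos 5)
Your proposal is correct, but it takes a genuinely different route from the paper's. The paper argues at the level of the eigenvalue equation: writing $S_{\omega}''\big(\overrightarrow{\Phi_\omega}\big)\vec\eta_0=\mu_0\vec\eta_0$ componentwise, it solves the second (purely algebraic) equation for $\eta_0=-\tfrac{\omega}{1-\mu_0}\xi_0$, substitutes into the first to obtain the scalar problem $L\xi_0=\mu_0\big(\tfrac{\omega^2}{1-\mu_0}+1\big)\xi_0$ with $L=-\partial_{xx}+(1-\omega^2)-(p+1)\phi_\omega^p$, identifies this scalar eigenvalue with Weinstein's unique negative eigenvalue $\lambda_{-1}$ (and $\xi_0$ with its eigenfunction $\zeta$), and then solves the resulting quadratic in $\mu_0$, producing the explicit eigenpair \eqref{lambda-lambda0}; existence is seeded by the test vector $\overrightarrow{\Phi_\omega}$, on which the form is negative. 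You instead argue at the level of the quadratic form: completing the square exhibits $S_{\omega}''\big(\overrightarrow{\Phi_\omega}\big)$ as congruent (not unitarily equivalent) to $\operatorname{diag}(L,\,I)$, and you convert the eigenvalue count into a Morse-index count via min--max, bounding the index by $1$ through the injectivity of the projection $(f,g)\mapsto f$ on any negative-definite subspace. Both proofs rest on the same key input, Weinstein's count for the scalar operator $L$. What the paper's computation buys is the explicit pair $(\mu_0,\vec\eta_0)$, which reappears in the spectral decomposition of Proposition \ref{prop:orth1}; what yours buys is softness and completeness: simplicity of the eigenvalue comes for free since the Morse index counts multiplicity, no quadratic equation needs to be solved, and you make explicit the positivity of the essential spectrum --- a fact the paper only establishes in Step 1 of Proposition \ref{prop:orth1}, yet already needs implicitly in this lemma to pass from $\big\langle S_{\omega}''\big(\overrightarrow{\Phi_\omega}\big)\overrightarrow{\Phi_\omega},\overrightarrow{\Phi_\omega}\big\rangle<0$ to the existence of a negative \emph{eigenvalue}. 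Your closing remark is also on target: since the change of variables $g\mapsto g+\omega f$ is not orthogonal, the reduction preserves inertia but not spectra, so the variational (rather than unitary-equivalence) formulation of the count is indeed the step that requires care.
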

\begin{proof}
It is known that the operator $-\partial_{xx}+(1-{\omega}^2)-(p+1){\phi}_{\omega}^p$ has only one negative eigenvalue (see \cite{Weinstein-SJMA-85}), and we denote it by $\lambda_{-1}$. Then there exists a unique associated eigenvector $\zeta \in H^1(\R)$ such that
\begin{align}\label{negativeeq}
-\partial_{xx}{\zeta}+(1-{\omega}^2){\zeta}-(p+1){\phi}_{\omega}^p{\zeta}=\lambda_{-1}\zeta.
\end{align}
Using the expression of $S_{\omega}''(\overrightarrow{\Phi_\omega})$ in \eqref{eq:S''Phi}, we have
\begin{align*}
\Big\langle S_{\omega}''(\overrightarrow{\Phi_\omega})&\overrightarrow{\Phi_\omega},\overrightarrow{\Phi_\omega}\Big\rangle\\
=&\int_\R (-\partial_{xx}\phi_\omega+\phi_\omega-(p+1){\phi_\omega}^{p+1}-\omega^2 \phi_\omega,-\omega\phi_\omega+\omega\phi_\omega)
\cdot \left(\begin{array}{c}
{\phi_\omega}\\
-{\omega}{\phi_\omega}
\end{array}\right)\dx\\
=&-p\|{\phi}_{\omega}\|_{L^{p+2}}^{p+2}<0.
\end{align*}
This implies that $S_{\omega}''(\overrightarrow{\Phi_\omega})$ has at least one negative eigenvalue, say, $\mu_0$.
Assume its associated eigenvector ${\vec \eta_0}=(\xi_0,\eta_0)^T$, that is,
\begin{align*}
S_{\omega}''(\overrightarrow{\Phi_\omega})\vec \eta_0=\mu_0 \vec \eta_0.
\end{align*}
Using the expression of $S_{\omega}''(\overrightarrow{\Phi_\omega})$ in \eqref{eq:S''Phi} again, the last equality yields
\begin{align*}
 \left\{\begin{aligned}
           -\partial_{xx}{\xi_0}+{\xi_0}-(p+1){\phi}_{\omega}^p\xi_0+{\omega}{\eta_0}&=\mu_0{\xi_0},\\
           {\eta_0}+{\omega}{\xi_0}&=\mu_0{\eta_0}.
        \end{aligned}
 \right.
\end{align*}
From the second equality, we have ${\eta_0}=-\frac{\omega}{1-\mu_0}\xi_0$. Then we substitute it into the first equality to get
\begin{align*}
           -\partial_{xx}{\xi_0}+(1-{\omega}^2){\xi_0}-(p+1){\phi}_{\omega}^p{\xi_0}&=\mu_0\Big(\frac{{\omega}^2}{1-\mu_0}+1\Big)\xi_0.
\end{align*}
Hence, by \eqref{negativeeq}, $(\mu_0,\vec \eta_0)$ is exactly the  pair satisfying
\begin{align}\label{lambda-lambda0}
\mu_0=\frac 12\Bigg(\lambda_{-1}+\omega^2+1-\sqrt{\lambda_{-1}^2+2(\omega^2-1)\lambda_{-1}+(\omega^2+1)^2}\Bigg),\qquad   \vec \eta_0=\begin{pmatrix} \zeta\\ \displaystyle\frac{\omega \zeta}{\mu_0-1} \end{pmatrix}.
\end{align}
This implies that $S_{\omega}''(\overrightarrow{\Phi_\omega})$ has exactly one simple negative eigenvalue.
This completes the proof of Lemma \ref{lem:negative}.
\end{proof}

The next lemma gives one of the negative direction of $S_{\omega}''\big(\overrightarrow{\Phi_\omega}\big)$.
\begin{lem}\label{lem:S''=Psi} Let
\begin{align*}
\vec \psi_\omega=\frac{1}{2\omega}\left(\begin{array}{c}
\partial_\omega \phi_\omega\\
-\omega\partial_\omega\phi_\omega
\end{array}\right), \qquad\overrightarrow{\Psi_\omega}=\left(\begin{array}{c}
\phi_\omega\\
0
\end{array}\right).
\end{align*}
Then
\begin{align}\label{S''Psi}
S_{\omega}''\big(\overrightarrow{\Phi_\omega}\big)\vec \psi_\omega=\overrightarrow{\Psi_\omega}.
\end{align}
Moreover,  we have
\begin{align*}
\Big\langle S_{\omega}''(\overrightarrow{\Phi_\omega})\vec \psi_\omega,\vec \psi_\omega\Big\rangle<0.
\end{align*}
\end{lem}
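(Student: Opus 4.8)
The plan is to handle the two assertions separately: first verify the explicit identity \eqref{S''Psi}, and then use it together with Lemma \ref{lem:partialQ} to read off the sign of the quadratic form.

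For the identity, I would simply apply the formula \eqref{eq:S''Phi} for $S_\omega''\big(\overrightarrow{\Phi_\omega}\big)$ to the vector $\vec\psi_\omega=\frac{1}{2\omega}(\partial_\omega\phi_\omega,-\omega\partial_\omega\phi_\omega)^T$, writing $f=\frac{1}{2\omega}\partial_\omega\phi_\omega$ and $g=-\frac12\partial_\omega\phi_\omega$. The second component is immediate, since $g+\omega f=0$. For the first component the only input needed is the $\omega$-derivative of the elliptic equation \eqref{elliptic}, namely $-\partial_{xx}\partial_\omega\phi_\omega+(1-\omega^2)\partial_\omega\phi_\omega-2\omega\phi_\omega-(p+1)\phi_\omega^p\partial_\omega\phi_\omega=0$; substituting this relation into $-\partial_{xx}f+f-(p+1)\phi_\omega^pf+\omega g$ collapses the expression to $\phi_\omega$, which is exactly the first entry of $\overrightarrow{\Psi_\omega}$. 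Equivalently, one could decompose $\vec\psi_\omega=\frac{1}{2\omega}\partial_\omega\overrightarrow{\Phi_\omega}+\frac{1}{2\omega}(0,\phi_\omega)^T$ and combine \eqref{eq:S''Phi-partialomega} with a direct evaluation of $S_\omega''\big(\overrightarrow{\Phi_\omega}\big)(0,\phi_\omega)^T$; both routes produce \eqref{S''Psi}.

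For the sign, I would exploit \eqref{S''Psi} to replace the Hessian acting on $\vec\psi_\omega$ by $\overrightarrow{\Psi_\omega}$, so that $\big\langle S_\omega''\big(\overrightarrow{\Phi_\omega}\big)\vec\psi_\omega,\vec\psi_\omega\big\rangle=\big\langle\overrightarrow{\Psi_\omega},\vec\psi_\omega\big\rangle$. Because $\overrightarrow{\Psi_\omega}=(\phi_\omega,0)^T$, the second components drop out and this pairing equals $\frac{1}{2\omega}\int_\R\phi_\omega\,\partial_\omega\phi_\omega\dx=\frac{1}{4\omega}\partial_\omega\|\phi_\omega\|_{L^2}^2$. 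At this point the degeneracy condition $|\omega|=\omega_c$ enters: differentiating $Q\big(\overrightarrow{\Phi_\lambda}\big)=-\lambda\|\phi_\lambda\|_{L^2}^2$ from \eqref{Qphi} gives $\partial_\lambda Q\big(\overrightarrow{\Phi_\lambda}\big)=-\|\phi_\lambda\|_{L^2}^2-\lambda\,\partial_\lambda\|\phi_\lambda\|_{L^2}^2$, and evaluating at $\lambda=\omega$, where Lemma \ref{lem:partialQ} forces $\partial_\lambda Q\big(\overrightarrow{\Phi_\lambda}\big)\big|_{\lambda=\omega}=0$, yields $\partial_\omega\|\phi_\omega\|_{L^2}^2=-\omega^{-1}\|\phi_\omega\|_{L^2}^2$. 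Hence $\big\langle S_\omega''\big(\overrightarrow{\Phi_\omega}\big)\vec\psi_\omega,\vec\psi_\omega\big\rangle=-\frac{1}{4\omega^2}\|\phi_\omega\|_{L^2}^2<0$, as claimed.

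The computations are short, and the only genuine step is recognizing that \eqref{S''Psi} turns the quadratic form into a single $L^2$ pairing, after which the sign is dictated entirely by the degeneracy identity of Lemma \ref{lem:partialQ}. I do not anticipate a real obstacle; one could alternatively obtain the sign by inserting the scaling \eqref{rescaling} directly, which shows the pairing is negative for every $\omega\neq0$ with $0<p<4$, but the route through Lemma \ref{lem:partialQ} is cleaner and makes transparent the role of the critical frequency $\omega_c$.
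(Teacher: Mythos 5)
Your proposal is correct and follows essentially the same route as the paper: the identity \eqref{S''Psi} is obtained by differentiating the elliptic equation \eqref{elliptic} in $\omega$ and applying the Hessian formula \eqref{eq:S''Phi}, and the sign is read off from $\big\langle\overrightarrow{\Psi_\omega},\vec\psi_\omega\big\rangle=\frac{1}{4\omega}\partial_\omega\|\phi_\omega\|_{L^2}^2$ combined with \eqref{Qphi} and Lemma \ref{lem:partialQ}. The only difference is cosmetic: you solve the vanishing of $\partial_\lambda Q\big(\overrightarrow{\Phi_\lambda}\big)\big|_{\lambda=\omega}$ for $\partial_\omega\|\phi_\omega\|_{L^2}^2$ and substitute, whereas the paper rewrites the pairing as $-\frac{1}{4\omega^2}\partial_\omega Q\big(\overrightarrow{\Phi_\omega}\big)-\frac{1}{4\omega^2}\|\phi_\omega\|_{L^2}^2$ before invoking the same lemma.
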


\begin{proof}
Taking the derivative of \eqref{elliptic} with respect to $\omega$,  
we have
\begin{align}
-\partial_{xx}(\partial_\omega\phi_\omega)+(1-\omega^2)\partial_\omega\phi_\omega-
(p+1)\phi_\omega^p\partial_\omega\phi_\omega=2\omega\phi_\omega.\label{partial-omega}
\end{align}
Using the expression of $S_{\omega}''\big(\overrightarrow{\Phi_\omega}\big)$ in \eqref{eq:S''Phi}, we have
\begin{align*}
S_{\omega}''\big(\overrightarrow{\Phi_\omega}\big)\vec \psi_\omega&=
\frac{1}{2\omega}\left(\begin{array}{c}
-\partial_{xx}(\partial_\omega\phi_\omega)+(1-\omega^2)
\partial_\omega\phi_\omega-(p+1)\phi_\omega^p\partial_\omega\phi_\omega\\
0
\end{array}\right).
\end{align*}
This combined with \eqref{partial-omega} gives
\begin{align}
S_{\omega}''\big(\overrightarrow{\Phi_\omega}\big)\vec \psi_\omega=
\left(\begin{array}{c}
\phi_\omega\\
0
\end{array}\right)=\overrightarrow{\Psi_\omega}.\label{S''Psi1}
\end{align}

Now we show $\Big\langle S_{\omega}''\big(\overrightarrow{\Phi_\omega}\big)\vec \psi_\omega,\vec \psi_\omega\Big\rangle<0$. From \eqref{S''Psi1}, we have
\begin{align}
\Big\langle S_{\omega}''\big(\overrightarrow{\Phi_\omega}\big)\vec \psi_\omega,\vec \psi_\omega\Big\rangle &=\Big\langle\overrightarrow{\Psi_\omega},\vec \psi_\omega\Big\rangle
=\int_\R (\phi_\omega,0)
\cdot \frac{1}{2\omega}\left(\begin{array}{c}
\partial_\omega \phi_\omega\\
-\omega\partial_\omega\phi_\omega
\end{array}\right)\dx\nonumber\\
&=\frac{1}{2\omega}\int_\R \phi_\omega\,\partial_\omega\phi_\omega\dx
=\frac{1}{4\omega}\partial_\omega\|\phi_\omega\|_{L^2}^2.\label{S''-psi-psi}
\end{align}
Note that, by \eqref{rescaling},
\begin{align*}
  \|\phi_\omega\|_{L^2}^2=(1-\omega^2)^{\frac 2p-\frac 12}\|\phi_0\|_{L^2}^2.
\end{align*}
Hence,
\begin{align*}
\partial_\omega \|\phi_\omega\|_{L^2}^2=-\big(\frac 4p-1 \big)\frac{\omega}{1-\omega^2}\|\phi_\omega\|_{L^2}^2<0.
\end{align*}
This completes the proof.
\end{proof}

Now we prove the following coercivity property.

\begin{prop}\label{prop:orth1}
Let $|\omega|<1$. Suppose that $\vec \eta = (\xi,\eta)^T \in H^1(\R)\times L^2(\R)$ satisfies
\begin{align}
\Big\langle\vec \eta, \partial_x\overrightarrow{\Phi_\omega}\Big\rangle
=\Big\langle\vec \eta ,\overrightarrow{\Psi_\omega}\Big\rangle=0,\label{ortheta}
\end{align}
where 
$\overrightarrow{\Psi_\omega}=(\phi_\omega,0)^T$.
Then
$$
\Big\langle S_\omega''\big(\overrightarrow{\Phi_\omega}\big)\vec \eta, \vec \eta\Big\rangle
\gtrsim \|\vec \eta\|_{H^1\times L^2}^2.
$$
\end{prop}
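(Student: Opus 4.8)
The plan is to identify the self-adjoint operator $L:=S_\omega''\big(\overrightarrow{\Phi_\omega}\big)$ from its spectral data, which the preceding lemmas supply, and then run a Grillakis--Shatah--Strauss type spectral decomposition argument tailored to the two orthogonality directions appearing in \eqref{ortheta}. The algebraic starting point is the identity obtained by completing the square in \eqref{eq:S''Phi}: for $\vec\eta=(\xi,\eta)^T$,
\[
\Big\langle L\vec\eta,\vec\eta\Big\rangle=\big\langle \mathcal L_+\xi,\xi\big\rangle+\big\|\eta+\omega\xi\big\|_{L^2}^2,\qquad \mathcal L_+:=-\partial_{xx}+(1-\omega^2)-(p+1)\phi_\omega^p.
\]
This identity serves two purposes. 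First, since $\phi_\omega$ decays, the Fourier symbol of $L$ at spatial infinity is $\big(\begin{smallmatrix}\tau^2+1 & \omega\\ \omega & 1\end{smallmatrix}\big)$, whose smaller eigenvalue is minimized at $\tau=0$ with value $1-|\omega|>0$; hence the essential spectrum of $L$ lies in $[1-|\omega|,\infty)$. Combined with Lemma \ref{lem:Ker} (the kernel is exactly $\R\,\partial_x\overrightarrow{\Phi_\omega}$) and Lemma \ref{lem:negative} (there is exactly one simple negative eigenvalue $\mu_0<0$, with eigenvector $\vec\eta_0$), this shows $0$ is an isolated eigenvalue and there is a genuine spectral gap $\delta>0$ with $\langle L\vec f,\vec f\rangle\ge\delta\|\vec f\|_{L^2\times L^2}^2$ on the positive spectral subspace $N_+$, in the $L^2$-orthogonal splitting $L^2\times L^2=\R\vec\eta_0\oplus\R\,\partial_x\overrightarrow{\Phi_\omega}\oplus N_+$.

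Next I would establish the $L^2$ version, $\langle L\vec\eta,\vec\eta\rangle\gtrsim\|\vec\eta\|_{L^2\times L^2}^2$, under \eqref{ortheta}. The second condition $\langle\vec\eta,\partial_x\overrightarrow{\Phi_\omega}\rangle=0$ removes the kernel component, so I write $\vec\eta=a\vec\eta_0+\vec\zeta$ with $\vec\zeta\in N_+$, giving $\langle L\vec\eta,\vec\eta\rangle=\mu_0 a^2\|\vec\eta_0\|_{L^2}^2+\langle L\vec\zeta,\vec\zeta\rangle$. Everything then hinges on dominating the negative term $\mu_0 a^2\|\vec\eta_0\|_{L^2}^2$ by a strict fraction of $\langle L\vec\zeta,\vec\zeta\rangle$. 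This is where the first condition in \eqref{ortheta} enters through Lemma \ref{lem:S''=Psi}: since $\overrightarrow{\Psi_\omega}=L\vec\psi_\omega$, I decompose $\vec\psi_\omega=b\vec\eta_0+(\text{kernel})+\vec\psi_+$, whence $\overrightarrow{\Psi_\omega}=\mu_0 b\,\vec\eta_0+L\vec\psi_+$, and $\langle\vec\eta,\overrightarrow{\Psi_\omega}\rangle=0$ reduces (after killing all cross terms between mutually orthogonal spectral subspaces) to $\mu_0 a b\,\|\vec\eta_0\|_{L^2}^2=-\langle\vec\zeta,L\vec\psi_+\rangle$. Applying Cauchy--Schwarz in the positive-definite form $\langle L\,\cdot\,,\cdot\rangle$ on $N_+$, namely $\langle\vec\zeta,L\vec\psi_+\rangle^2\le\langle L\vec\zeta,\vec\zeta\rangle\,\langle L\vec\psi_+,\vec\psi_+\rangle$, yields
\[
\mu_0 a^2\|\vec\eta_0\|_{L^2}^2\ge-\theta\,\langle L\vec\zeta,\vec\zeta\rangle,\qquad \theta:=\frac{\langle L\vec\psi_+,\vec\psi_+\rangle}{|\mu_0|\,b^2\|\vec\eta_0\|_{L^2}^2}.
\]
The decisive point is that $\theta\in[0,1)$: indeed $\langle L\vec\psi_\omega,\vec\psi_\omega\rangle=\mu_0 b^2\|\vec\eta_0\|_{L^2}^2+\langle L\vec\psi_+,\vec\psi_+\rangle$, so the sign $\langle L\vec\psi_\omega,\vec\psi_\omega\rangle<0$ from Lemma \ref{lem:S''=Psi} is exactly the statement $\theta<1$ (and it also forces $b\ne0$, legitimizing the step above). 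Consequently $\langle L\vec\eta,\vec\eta\rangle\ge(1-\theta)\langle L\vec\zeta,\vec\zeta\rangle\ge(1-\theta)\delta\|\vec\zeta\|_{L^2}^2$; since the same chain of inequalities bounds $a^2\|\vec\eta_0\|_{L^2}^2$ by a multiple of $\langle L\vec\zeta,\vec\zeta\rangle$, I obtain $\|\vec\eta\|_{L^2\times L^2}^2\lesssim\langle L\vec\zeta,\vec\zeta\rangle\lesssim\langle L\vec\eta,\vec\eta\rangle$.

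Finally I would upgrade to the $H^1\times L^2$ norm using the same completing-the-square identity: solving for the derivative term gives
\[
\|\partial_x\xi\|_{L^2}^2=\langle L\vec\eta,\vec\eta\rangle-\|\eta+\omega\xi\|_{L^2}^2-(1-\omega^2)\|\xi\|_{L^2}^2+(p+1)\langle\phi_\omega^p\xi,\xi\rangle\le\langle L\vec\eta,\vec\eta\rangle+C\|\xi\|_{L^2}^2,
\]
where $C$ depends only on $\|\phi_\omega\|_{L^\infty}$; combining this with the $L^2$-coercivity just proved (which controls $\|\xi\|_{L^2}^2+\|\eta\|_{L^2}^2$) bounds $\|\vec\eta\|_{H^1\times L^2}^2$ by $\langle L\vec\eta,\vec\eta\rangle$, as required. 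I expect the main obstacle to be the middle step, and specifically the bookkeeping that converts the single scalar sign $\langle L\vec\psi_\omega,\vec\psi_\omega\rangle<0$ into the strict inequality $\theta<1$ and thence a positive coercivity constant. This is precisely where the non-standard choice of directions is essential: $\overrightarrow{\Psi_\omega}$ is \emph{not} a kernel element but lies in the range of $L$ with a negative ``$L^{-1}$-pairing'' (a Vakhitov--Kolokolov-type sign), and it is this sign, rather than any crude gap estimate, that defeats the negative eigenvalue. A secondary technical point, needed to make the decomposition rigorous, is justifying the orthogonal splitting with a true gap $\delta>0$, for which the positivity of the essential spectrum read off from the completing-the-square identity is exactly what is required.
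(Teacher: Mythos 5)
Your proposal is correct and follows essentially the same route as the paper's proof: spectral splitting of $\vec\eta$ and $\vec\psi_\omega$ along the negative eigenvector, kernel, and positive subspace; the identity $S_\omega''(\overrightarrow{\Phi_\omega})\vec\psi_\omega=\overrightarrow{\Psi_\omega}$ together with Cauchy--Schwarz in the form $\langle S_\omega''\,\cdot\,,\cdot\rangle$ on the positive subspace; the sign $\langle S_\omega''\vec\psi_\omega,\vec\psi_\omega\rangle<0$ to beat the negative mode; and the same final $H^1$ upgrade via $\|\partial_x\xi\|_{L^2}^2\lesssim\langle S_\omega''\vec\eta,\vec\eta\rangle+\|\vec\eta\|_{L^2\times L^2}^2$. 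Indeed your coercivity factor $1-\theta$ is exactly the paper's constant $\delta_0/\big(\langle S_\omega''\vec g,\vec g\rangle+\delta_0\big)$, so the two arguments coincide up to notation (your explicit remark that $b\neq 0$ is a slightly more careful justification of the division step than in the paper).
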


\begin{proof}
From the expression of $S_\omega''\big(\overrightarrow{\Phi_\omega}\big)$ in \eqref{eq:S''Phi}, we can write $S_\omega''\big(\overrightarrow{\Phi_\omega}\big)$ as
\begin{align*}
S_\omega''\big(\overrightarrow{\Phi_\omega}\big)=L+V,
\end{align*}
where
$L=\left(\begin{matrix}
-\partial_{xx}+1 & \omega\\
\omega & 1
\end{matrix}\right)$,
and $V=\left(\begin{matrix}
-(p+1)\phi_\omega^p & 0\\
0 & 0
\end{matrix}\right)$. Hence $V$ is a compact perturbation of the self-adjoint operator $L$.

\noindent\emph{Step 1. Analyse the spectrum of} $S_\omega''\big(\overrightarrow{\Phi_\omega}\big).$

We first compute the essential spectrum of $L$. Note that for any $\vec f=(f,g)^T\in H^1(\R)\times L^2(\R)$,
\begin{align}
\langle L\vec f,\vec f \rangle&= \Big\langle \left(\begin{matrix}
-\partial_{xx}+1 & \omega\\\nonumber
\omega & 1
\end{matrix}\right)\left(\begin{array}{c}
f\\
g\\
\end{array}\right),\left(\begin{array}{c}
f\\
g\\
\end{array}\right) \Big\rangle\\\nonumber
&=\int_\mathbb R (-\partial_{xx}f+f+\omega g,\omega f+g)\cdot\left(\begin{array}{c}
f\\
g\\
\end{array}\right)\dx\\
&=\|\partial_xf\|_{L^2}^2+\|f\|_{L^2}^2+2\omega \langle f,g \rangle+\|g\|_{L^2}^2\nonumber\\
&= \|\vec f\|_{H^1\times L^2}^2+2\omega \langle f,g \rangle.\label{Lf}
\end{align}
For the term $2\omega \langle f,g \rangle$, applying H\"older's and Young's inequalities, we have
\begin{align*}
|2\omega \langle f,g \rangle| \leq |\omega| \|\vec f\|_{H^1\times L^2}^2.
\end{align*}
Taking this estimate into \eqref{Lf}, we have
\begin{align*}
\langle L\vec f,\vec f \rangle\ge (1-|\omega|)\|\vec f\|_{H^1\times L^2}^2.
\end{align*}
Since $|\omega|<1$, we get
\begin{align*}
\langle L\vec f,\vec f \rangle \gtrsim \|\vec f\|_{H^1\times L^2}^2.
\end{align*}
This means that there exists  $\delta >0$ such that  the essential spectrum of $L$ is $[\delta,+\infty)$. By Weyl's Theorem, $S_\omega''\big(\overrightarrow{\Phi_\omega}\big)$ and $L$ share the same essential spectrum. So we obtain the essential spectrum of $S_\omega''\big(\overrightarrow{\Phi_\omega}\big)$. Recall that we have obtained the only one negative eigenvalue $\mu_0$ of $S_\omega''\big(\overrightarrow{\Phi_\omega}\big)$ in Lemma \ref{lem:negative} and the kernel of $S_\omega''\big(\overrightarrow{\Phi_\omega}\big)$ in Lemma \ref{lem:Ker}. So the discrete spectrum of $S_\omega''\big(\overrightarrow{\Phi_\omega}\big)$ is $\mu_0$, $0$, and the essential spectrum is $[\delta,+\infty)$.

\noindent\emph{Step 2. Positivity}.

The argument here is inspired by \cite{BeGhLecoz-CPDE-14,LecozYifei-PRE}. By Lemma \ref{lem:negative}, we have the unique negative eigenvalue $\mu_0$ and eigenvector $\vec \eta_0$ of $S_\omega''\big(\overrightarrow{\Phi_\omega}\big)$. For convenience, we normalize the eigenvector $\vec \eta_0$  such that $\|\vec \eta_0\|_{L^2\times L^2}=1$. Hence, for vector $\vec\eta\in H^1(\R)\times L^2(\R)$, by the spectral decomposition theorem we can write the decomposition of $\vec\eta$ along the spectrum of $S_\omega''\big(\overrightarrow{\Phi_\omega}\big)$,
\begin{align*}
\vec\eta=a_{\eta}\vec\eta_0+b_{\eta}\partial_x\overrightarrow{\Phi_\omega}+\vec g_\eta,
\end{align*}
where $a_{\eta},$ $b_{\eta}\in \mathbb R$, $\partial_x\overrightarrow{\Phi_\omega}\in Ker\Big(S_\omega''\big(\overrightarrow{\Phi_\omega}\big)\Big)$ and $\vec g_\eta$ lies in the positive eigenspace of $S_\omega''\big(\overrightarrow{\Phi_\omega}\big)$, that is, $\vec g_\eta$ satisfies
$$
\langle \vec g_\eta, \vec \eta_0\rangle=\langle \vec g_\eta, \partial_x\overrightarrow{\Phi_\omega}\rangle=0,
$$
and there exists an absolute constant $\sigma>0$ such that
\begin{align}
\Big\langle S_\omega''\big(\overrightarrow{\Phi_\omega}\big)\vec g_\eta, \vec g_\eta\Big\rangle
\ge \sigma \|\vec g_\eta\|_{L^2\times L^2}^2.\label{geta}
\end{align}
Since $\vec \eta$ satisfies the orthogonality condition $\Big\langle\vec \eta, \partial_x\overrightarrow{\Phi_\omega}\Big\rangle=0$ in \eqref{ortheta}  and $\big\langle \vec \eta_0, \partial_x\overrightarrow{\Phi_\omega}\big\rangle=0$, we have $b_\eta=0$, and thus
\begin{align}
\vec\eta=a_\eta\vec\eta_0+\vec g_\eta.\label{expressioneta}
\end{align}
Substituting \eqref{expressioneta} into $\Big\langle S_\omega''\big(\overrightarrow{\Phi_\omega}\big)\vec \eta, \vec \eta\Big\rangle$, we get
\begin{align*}
\Big\langle S_\omega''\big(\overrightarrow{\Phi_\omega}\big)\vec \eta, \vec \eta\Big\rangle&=\Big\langle S_\omega''\big(\overrightarrow{\Phi_\omega}\big)
(a_\eta\vec\eta_0+\vec g_\eta),a_\eta\vec\eta_0+\vec g_\eta\Big\rangle\\
&=a_\eta^2\Big\langle S_\omega''\big(\overrightarrow{\Phi_\omega}\big)\vec\eta_0,\vec\eta_0\Big\rangle
+2\mu_0 a_\eta\Big\langle \vec g_\eta,\vec\eta_0\Big\rangle
+\Big\langle S_\omega''\big(\overrightarrow{\Phi_\omega}\big)\vec g_{\eta},\vec g_{\eta}\Big\rangle.
\end{align*}
Due to the orthogonality property of eigenvector $\langle \vec g_\eta,\vec\eta_0\rangle=0$, we have
\begin{align}
\Big\langle S_\omega''\big(\overrightarrow{\Phi_\omega}\big)\vec \eta, \vec \eta\Big\rangle
&=a_\eta^2\Big\langle S_\omega''\big(\overrightarrow{\Phi_\omega}\big)\vec\eta_0,\vec\eta_0\Big\rangle
+\Big\langle S_\omega''\big(\overrightarrow{\Phi_\omega}\big)\vec g_{\eta},\vec g_{\eta}\Big\rangle \nonumber\\
&=\mu_0a_\eta^2\langle \vec \eta_0 , \vec \eta_0\rangle+\Big\langle S_\omega''\big(\overrightarrow{\Phi_\omega}\big)\vec g_{\eta},\vec g_{\eta}\Big\rangle \nonumber \\
&=\mu_0a_\eta^2+\Big\langle S_\omega''\big(\overrightarrow{\Phi_\omega}\big)\vec g_{\eta},\vec g_{\eta}\Big\rangle.\label{S''eta}
\end{align}
To $\vec\psi_\omega$, by spectral decomposition theorem again, we may write
\begin{align*}
\vec\psi_\omega=a\vec \eta_0+b\partial_x\overrightarrow{\Phi_\omega}+\vec g,
\end{align*}
where $a,b\in \R,$ and $\vec g$ lies in the positive eigenspace of $S_\omega''\big(\overrightarrow{\Phi_\omega}\big)$.
We note that $\Big\langle\vec\psi_\omega, \partial_x\overrightarrow{\Phi_\omega}\Big\rangle=0$.
Indeed, since $\phi_\omega$ is an even function, we have that  $\partial_\omega\phi_\omega$ is even and $\partial_x\phi_\omega$ is odd. Hence, we get
\begin{align*}
&\Big\langle\vec\psi_\omega, \partial_x\overrightarrow{\Phi_\omega}\Big\rangle
=\frac{1+\omega^2}{2\omega}\int_\mathbb R \partial_\omega\phi_\omega\>\partial_x\phi_\omega\,dx
=0.
\end{align*}
Then $b=0$, and thus
\begin{align*}
\vec\psi_\omega=a\vec \eta_0+\vec g.
\end{align*}
Therefore, a similar computation as above shows that
\begin{align*}
\Big\langle S_\omega''\big(\overrightarrow{\Phi_\omega}\big)\vec\psi_\omega, \vec\psi_\omega\Big\rangle
&=\Big\langle S_\omega''\big(\overrightarrow{\Phi_\omega}\big)(a\vec \eta_0+\vec g), a\vec \eta_0+\vec g\Big\rangle\\
&=\Big\langle S_\omega''\big(\overrightarrow{\Phi_\omega}\big)(a\vec\eta_0), a\vec\eta_0\Big\rangle
+\Big\langle S_\omega''\big(\overrightarrow{\Phi_\omega}\big)\vec g, \vec g\Big\rangle\\
&=\mu_0a^2 +\Big\langle S_\omega''\big(\overrightarrow{\Phi_\omega}\big)\vec g, \vec g\Big\rangle.
\end{align*}
For convenience, let $-\delta_0=\Big\langle S_\omega''\big(\overrightarrow{\Phi_\omega}\big)\vec\psi_\omega, \vec\psi_\omega\Big\rangle$. Then by Lemma \ref{lem:S''=Psi}, we know that $\delta_0 >0$. Moreover, we have
\begin{align}
-\delta_0=\mu_0 a^2 +\Big\langle S_\omega''\big(\overrightarrow{\Phi_\omega}\big)\vec g, \vec g\Big\rangle.\label{det0}
\end{align}
Using the orthogonality assumption $\Big\langle\vec \eta ,\overrightarrow{\Psi_\omega}\Big\rangle=0$ in (\ref{ortheta}) and \eqref{S''Psi}, we have
\begin{align*}
0&=\Big\langle\vec\eta,\overrightarrow{\Psi_\omega}\Big\rangle=\Big\langle a_\eta\vec\eta_0+\vec g_\eta,S_\omega''\big(\overrightarrow{\Phi_\omega}\big)\vec\psi_\omega\Big\rangle\\
&=\Big\langle a_\eta\vec\eta_0+\vec g_\eta,S_\omega''\big(\overrightarrow{\Phi_\omega}\big)(a\vec \eta_0+\vec g)\Big\rangle\\
&=\Big\langle a_\eta\vec \eta_0,S_\omega''\big(\overrightarrow{\Phi_\omega}\big)(a\vec \eta_0)\Big\rangle+\Big\langle \vec g_\eta,S_\omega''\big(\overrightarrow{\Phi_\omega}\big)\vec g\Big\rangle\\
&=\mu_0aa_\eta\langle \vec\eta_0,\vec \eta_0 \rangle +\Big\langle S_\omega''\big(\overrightarrow{\Phi_\omega}\big)\vec g,\vec g_\eta\Big\rangle\\
&=\mu_0a a_\eta+\Big\langle S_\omega''\big(\overrightarrow{\Phi_\omega}\big)\vec g,\vec g_\eta\Big\rangle.
\end{align*}
So we get the equality
\begin{align*}
0=\mu_0 aa_\eta+\Big\langle S_\omega''\big(\overrightarrow{\Phi_\omega}\big)\vec g,\vec g_\eta\Big\rangle.
\end{align*}
By the Cauchy-Schwarz inequality, we have
\begin{align*}
(\mu_0aa_\eta)^2&=\Big\langle S_\omega''\big(\overrightarrow{\Phi_\omega}\big)\vec {g}, \vec g_\eta\Big\rangle^2 \nonumber \\
&\leq \Big\langle S_\omega''\big(\overrightarrow{\Phi_\omega}\big)\vec {g}, \vec {g}\Big\rangle\Big\langle S_\omega''\big(\overrightarrow{\Phi_\omega}\big)\vec g_\eta, \vec g_\eta\Big\rangle.
\end{align*}
This gives
\begin{align}\label{lambda0}
(-\mu_0 a^2 )(-\mu_0 a_\eta^2)\leq \Big\langle S_\omega''\big(\overrightarrow{\Phi_\omega}\big)\vec {g}, \vec {g}\Big\rangle\Big\langle S_\omega''\big(\overrightarrow{\Phi_\omega}\big)\vec g_\eta, \vec g_\eta\Big\rangle.
\end{align}
The last equality combining with \eqref{det0} implies that
\begin{align*}
-\mu_0a_\eta^2\leq\frac{\Big\langle S_\omega''\big(\overrightarrow{\Phi_\omega}\big)\vec {g}, \vec {g}\Big\rangle\Big\langle S_\omega''\big(\overrightarrow{\Phi_\omega}\big)\vec g_\eta, \vec g_\eta\Big\rangle}{-\mu_0 a^2}
=\frac{\Big\langle S_\omega''\big(\overrightarrow{\Phi_\omega}\big)\vec {g}, \vec {g}\Big\rangle\Big\langle S_\omega''\big(\overrightarrow{\Phi_\omega}\big)\vec g_\eta, \vec g_\eta\Big\rangle}{\Big\langle S_\omega''\big(\overrightarrow{\Phi_\omega}\big)\vec {g}, \vec {g}\Big\rangle+\delta_0},
\end{align*}
that is,
\begin{align}\label{lambda0eta}
\mu_0a_\eta^2 \ge -\frac{\Big\langle S_\omega''\big(\overrightarrow{\Phi_\omega}\big)\vec {g}, \vec {g}\Big\rangle\Big\langle S_\omega''\big(\overrightarrow{\Phi_\omega}\big)\vec g_\eta, \vec g_\eta\Big\rangle}{\Big\langle S_\omega''\big(\overrightarrow{\Phi_\omega}\big)\vec {g}, \vec {g}\Big\rangle+\delta_0}.
\end{align}
Inserting \eqref{lambda0eta} into \eqref{S''eta}, we obtain
\begin{align*}
\Big\langle S_\omega''\big(\overrightarrow{\Phi_\omega}\big)\vec \eta, \vec \eta\Big\rangle
&\ge\Big(1-\frac{\Big\langle S_\omega''\big(\overrightarrow{\Phi_\omega}\big)\vec {g}, \vec {g}\Big\rangle}{\Big\langle S_\omega''\big(\overrightarrow{\Phi_\omega}\big)\vec {g}, \vec {g}\Big\rangle+\delta_0}\Big)\Big\langle S_\omega''\big(\overrightarrow{\Phi_\omega}\big)\vec g_\eta, \vec g_\eta\Big\rangle\\
&=\frac{\delta_0}{\Big\langle S_\omega''\big(\overrightarrow{\Phi_\omega}\big)\vec {g}, \vec {g}\Big\rangle+\delta_0}
\Big\langle S_\omega''\big(\overrightarrow{\Phi_\omega}\big)\vec g_\eta, \vec g_\eta\Big\rangle.
\end{align*}
Recalling that $\vec g_\eta$ satisfies \eqref{geta}, we have
\begin{align}
\Big\langle S_\omega''\big(\overrightarrow{\Phi_\omega}\big)\vec \eta, \vec \eta\Big\rangle
\ge\frac{\delta_0\sigma}{\Big\langle S_\omega''\big(\overrightarrow{\Phi_\omega}\big)\vec {g}, \vec {g}\Big\rangle+\delta_0}\|\vec g_\eta\|_{L^2\times L^2}^2, \quad \sigma>0.\label{S omega eta L2}
\end{align}
From the expression of $\vec \eta$ in \eqref{expressioneta} and the inequality \eqref{lambda0eta}, we have
\begin{align*}
\|\vec \eta\|_{L^2\times L^2}^2&=\|a_\eta\vec\eta_0+\vec g_\eta\|_{L^2\times L^2}^2
=a_\eta^2+\|\vec g_\eta\|_{L^2\times L^2}^2\\
&\leq   -\frac{\Big\langle S_\omega''\big(\overrightarrow{\Phi_\omega}\big)\vec {g}, \vec {g}\Big\rangle}{\mu_0\delta_0}\Big\langle S_\omega''\big(\overrightarrow{\Phi_\omega}\big)\vec \eta, \vec \eta\Big\rangle+\|\vec g_\eta\|_{L^2\times L^2}^2\\
&\lesssim \Big\langle S_\omega''\big(\overrightarrow{\Phi_\omega}\big)\vec \eta,\vec \eta\Big\rangle.
\end{align*}
Therefore, this  gives
\begin{align}
\Big\langle S_\omega''\big(\overrightarrow{\Phi_\omega}\big)\vec \eta, \vec \eta\Big\rangle
\gtrsim \big\|\vec \eta\big\|_{L^2\times L^2}^2.\label{L2L2}
\end{align}

To obtain the final conclusion, we still need to estimate
$$
\Big\langle S_\omega''\big(\overrightarrow{\Phi_\omega}\big)\vec \eta, \vec \eta\Big\rangle
\gtrsim \|\vec \eta\|_{H^1\times L^2}^2.
$$
Using the expression of $S_\omega''\big(\overrightarrow{\Phi_\omega})$ in \eqref{eq:S''Phi}, we have
\begin{align*}
\langle S_\omega''\big(\overrightarrow{\Phi_\omega}\big)\vec \eta, \vec \eta\rangle
&=\int_\mathbb R (-\partial_{xx}\xi+\xi-(p+1)\phi_\omega^p\xi+\omega \eta,\eta+\omega \xi) \cdot \left(\begin{array}{c}
\xi\\
\eta
\end{array}\right) \dx\\
&\hspace{1cm}=\|\partial_x\xi\|_{L^2}^2+
\|\vec \eta \|_{L^2\times L^2}^2+2\omega\int_\mathbb R\xi\eta\dx-(p+1)\int_\mathbb R|\phi_{\omega}|^p\xi^2\dx.
\end{align*}
Thus by H\"older's and Young's inequalities and \eqref{L2L2}, we get
\begin{align}
\|\partial_x\xi\|_{L^2}^2&=\Big\langle S_\omega''\big(\overrightarrow{\Phi_\omega}\big)\vec \eta, \vec \eta\Big\rangle-2\omega\int_\mathbb R\xi\eta\dx+(p+1)\int_\mathbb R|\phi_{\omega}|^p\xi^2\dx-\|\vec \eta \|_{L^2\times L^2}^2\nonumber\\
&\leq\Big\langle S_\omega''\big(\overrightarrow{\Phi_\omega}\big)\vec \eta, \vec \eta\Big\rangle+2|\omega|\|\xi\|_{L^2}\|\eta\|_{L^2}+(p+1)\|\phi_\omega\|_{L^\infty}^p\|\xi\|_{L^2}^2\nonumber\\
&\leq\Big\langle S_\omega''\big(\overrightarrow{\Phi_\omega}\big)\vec \eta, \vec \eta\Big\rangle+\Big(|\omega|+(p+1)\|\phi_\omega\|^p_{L^\infty}\Big)\|\vec \eta \|_{L^2\times L^2}^2\nonumber\\
&\lesssim\Big\langle S_\omega''\big(\overrightarrow{\Phi_\omega}\big)\vec \eta, \vec \eta\Big\rangle+\|\vec \eta \|_{L^2\times L^2}^2\lesssim \Big\langle S_\omega''\big(\overrightarrow{\Phi_\omega}\big)\vec \eta, \vec \eta\Big\rangle.\label{partial-x-xi}
\end{align}
Therefore, together \eqref{L2L2} and \eqref{partial-x-xi}, we obtain
\begin{align*}
\|\vec \eta\|_{H^1\times L^2}^2
=\|\partial_x\xi\|_{L^2}^2+\|\vec \eta \|_{L^2\times L^2}^2\lesssim \Big\langle S_\omega''\big(\overrightarrow{\Phi_\omega}\big)\vec \eta, \vec \eta\Big\rangle.
\end{align*}
Thus we obtain the desired result.
\end{proof}

Applying Proposition \ref{prop:orth1}, we obtain the following corollary, which is the nonstandard coercivity property and one of the key ingredients in our proof. Corollary \ref{cor:orth2} shows that we can replace the element $\partial_x\overrightarrow{\Phi_\omega}$ in the orthogonal condition \eqref{orthxi} by a suitably defined vector $\overrightarrow{\Gamma_\omega}$. The new orthogonal condition $\Big\langle\vec \eta, \overrightarrow{\Gamma_\omega}\Big\rangle=0$ has an  essential effect on the estimates of the translation parameter $y$ and $\lambda$ in Section \ref{sec:dynamic}.
\begin{cor}\label{cor:orth2}
Let $|\omega|<1$. Suppose that $\vec \eta \in H^1(\R)\times L^2(\R)$ satisfies
\begin{align}\label{orthxi}
\Big\langle\vec \eta, \overrightarrow{\Gamma_\omega}\Big\rangle
=\Big\langle\vec \eta ,\overrightarrow{\Psi_\omega}\Big\rangle=0,
\end{align}
where $\overrightarrow{\Gamma_\omega}\in H^1(\mathbb R)\times L^2(\mathbb R)$ and $\partial_x\overrightarrow{\Gamma_\omega}=\overrightarrow{\Psi_\omega}=(\phi_\omega,0)^T$. Then
\begin{align}
\Big\langle S_\omega''\big(\overrightarrow{\Phi_\omega}\big)\vec \eta, \vec \eta\Big\rangle
\gtrsim \big\|\vec \eta\big\|_{H^1\times L^2}^2.\label{S''xi}
\end{align}
\end{cor}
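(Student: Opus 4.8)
The plan is to reduce Corollary \ref{cor:orth2} to the already-established Proposition \ref{prop:orth1} by correcting $\vec\eta$ along the kernel direction $\partial_x\overrightarrow{\Phi_\omega}$. The two statements share the second orthogonality condition $\langle\vec\eta,\overrightarrow{\Psi_\omega}\rangle=0$; only the first condition differs, being posed against $\overrightarrow{\Gamma_\omega}$ here rather than against $\partial_x\overrightarrow{\Phi_\omega}$ as in the proposition. Since $\partial_x\overrightarrow{\Phi_\omega}$ spans $Ker\big(S_\omega''(\overrightarrow{\Phi_\omega})\big)$ by Lemma \ref{lem:Ker}, subtracting a multiple of it alters neither the quadratic form $\langle S_\omega''(\overrightarrow{\Phi_\omega})\cdot,\cdot\rangle$ nor the norm up to equivalence, as long as the coefficient is controlled.

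Concretely, first I set $\vec w=\vec\eta-c\,\partial_x\overrightarrow{\Phi_\omega}$ with $c=\langle\vec\eta,\partial_x\overrightarrow{\Phi_\omega}\rangle\big/\|\partial_x\overrightarrow{\Phi_\omega}\|_{L^2\times L^2}^2$, so that $\langle\vec w,\partial_x\overrightarrow{\Phi_\omega}\rangle=0$ by construction. The second orthogonality is preserved for free: since $\partial_x\overrightarrow{\Phi_\omega}=(\phi_\omega',-\omega\phi_\omega')^T$ and $\overrightarrow{\Psi_\omega}=(\phi_\omega,0)^T$, one has $\langle\partial_x\overrightarrow{\Phi_\omega},\overrightarrow{\Psi_\omega}\rangle=\int_\R\phi_\omega'\phi_\omega\dx=\tfrac12\int_\R(\phi_\omega^2)'\dx=0$ by the decay of $\phi_\omega$, whence $\langle\vec w,\overrightarrow{\Psi_\omega}\rangle=\langle\vec\eta,\overrightarrow{\Psi_\omega}\rangle=0$. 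Thus $\vec w$ satisfies both hypotheses of Proposition \ref{prop:orth1}, giving $\langle S_\omega''(\overrightarrow{\Phi_\omega})\vec w,\vec w\rangle\gtrsim\|\vec w\|_{H^1\times L^2}^2$. Because $\partial_x\overrightarrow{\Phi_\omega}$ lies in the kernel, all cross terms vanish and $\langle S_\omega''(\overrightarrow{\Phi_\omega})\vec\eta,\vec\eta\rangle=\langle S_\omega''(\overrightarrow{\Phi_\omega})\vec w,\vec w\rangle$.

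The remaining and decisive point is to bound the correction coefficient $c$ by $\|\vec w\|$, and this is exactly where the orthogonality against $\overrightarrow{\Gamma_\omega}$ enters. Inserting $\vec\eta=\vec w+c\,\partial_x\overrightarrow{\Phi_\omega}$ into the hypothesis $\langle\vec\eta,\overrightarrow{\Gamma_\omega}\rangle=0$ gives $0=\langle\vec w,\overrightarrow{\Gamma_\omega}\rangle+c\,\langle\partial_x\overrightarrow{\Phi_\omega},\overrightarrow{\Gamma_\omega}\rangle$. Integrating by parts and using $\partial_x\overrightarrow{\Gamma_\omega}=\overrightarrow{\Psi_\omega}$, I compute the pivotal pairing $\langle\partial_x\overrightarrow{\Phi_\omega},\overrightarrow{\Gamma_\omega}\rangle=-\langle\overrightarrow{\Phi_\omega},\overrightarrow{\Psi_\omega}\rangle=-\|\phi_\omega\|_{L^2}^2\neq0$. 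The non-degeneracy of this coefficient is the crux: it lets me solve $c=\langle\vec w,\overrightarrow{\Gamma_\omega}\rangle\big/\|\phi_\omega\|_{L^2}^2$ and conclude $|c|\lesssim\|\vec w\|_{L^2\times L^2}\lesssim\|\vec w\|_{H^1\times L^2}$, since $\overrightarrow{\Gamma_\omega}\in H^1(\R)\times L^2(\R)$ is fixed.

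Combining these, $\|\vec\eta\|_{H^1\times L^2}\leq\|\vec w\|_{H^1\times L^2}+|c|\,\|\partial_x\overrightarrow{\Phi_\omega}\|_{H^1\times L^2}\lesssim\|\vec w\|_{H^1\times L^2}$, and therefore $\langle S_\omega''(\overrightarrow{\Phi_\omega})\vec\eta,\vec\eta\rangle=\langle S_\omega''(\overrightarrow{\Phi_\omega})\vec w,\vec w\rangle\gtrsim\|\vec w\|_{H^1\times L^2}^2\gtrsim\|\vec\eta\|_{H^1\times L^2}^2$, which is exactly \eqref{S''xi}. I expect the only genuine obstacle to be verifying the two inner-product identities — that $\langle\partial_x\overrightarrow{\Phi_\omega},\overrightarrow{\Psi_\omega}\rangle=0$, so the second orthogonality survives the correction, and that $\langle\partial_x\overrightarrow{\Phi_\omega},\overrightarrow{\Gamma_\omega}\rangle=-\|\phi_\omega\|_{L^2}^2$ is nonzero, so $c$ is controllable; everything else is routine bilinearity and kernel bookkeeping.
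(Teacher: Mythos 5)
Your proof is correct and takes essentially the same approach as the paper: you shift $\vec\eta$ by a multiple of the kernel direction $\partial_x\overrightarrow{\Phi_\omega}$ so that Proposition \ref{prop:orth1} applies, note the quadratic form is unchanged since that direction lies in $Ker\big(S_\omega''(\overrightarrow{\Phi_\omega})\big)$, and control the correction coefficient via the identity $\big\langle\partial_x\overrightarrow{\Phi_\omega},\overrightarrow{\Gamma_\omega}\big\rangle=-\|\phi_\omega\|_{L^2}^2$ obtained from $\partial_x\overrightarrow{\Gamma_\omega}=\overrightarrow{\Psi_\omega}$, which is precisely the paper's argument (your $c$ is the paper's $-b$). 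No gaps.
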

\begin{proof}
We define
$$\vec\xi=\vec\eta+b\partial_x\overrightarrow{\Phi_\omega},\qquad\vec\xi\in H^1(\R)\times L^2(\R).$$
If we choose
$$b=-\frac{\Big\langle\vec\eta,\partial_x\overrightarrow{\Phi_\omega}\Big\rangle}{\|\partial_x\overrightarrow{\Phi_\omega}\|_{L^2\times L^2}^2},$$
then
\begin{align*}
\Big\langle \vec \xi, \partial_x\overrightarrow{\Phi_\omega}\Big\rangle=0.
\end{align*}
Moreover, by \eqref{orthxi}, we have
\begin{align}
\Big\langle\vec \xi ,\overrightarrow{\Psi_\omega}\Big\rangle =\Big\langle\vec\eta+b\partial_x\overrightarrow{\Phi_\omega},\overrightarrow{\Psi_\omega}\Big\rangle
=\Big\langle\vec \eta, \overrightarrow{\Psi_\omega}\Big\rangle+b\Big\langle \partial_x\overrightarrow{\Phi_\omega},\overrightarrow{\Psi_\omega}\Big\rangle. \label{etaPsi}
\end{align}
Note that
\begin{align*}
b\Big\langle \partial_x\overrightarrow{\Phi_\omega},\overrightarrow{\Psi_\omega}\Big\rangle
=b\int_\mathbb R \Big(\partial_x\phi_\omega, (-\omega)\partial_x\phi_\omega \Big)\cdot \left(\begin{array}{c}
\phi_\omega\\
0
\end{array}\right)\dx=b \int_\mathbb R \partial_x\phi_\omega \phi_\omega\dx=0.
\end{align*}
Hence, $\Big\langle\vec \xi ,\overrightarrow{\Psi_\omega}\Big\rangle=0$. Therefore, $\vec \xi$ satisfies the orthogonality condition \eqref{ortheta} in Proposition \ref{prop:orth1}.
Then using the conclusion of Proposition \ref{prop:orth1} and $S_\omega''(\overrightarrow{\Phi_\omega})\partial_x \overrightarrow{\Phi_\omega}=\vec 0$, we get
\begin{align*}
\Big\langle S_\omega''(\overrightarrow{\Phi_\omega})\vec \eta, \vec \eta\Big\rangle &=\Big\langle S_\omega''(\overrightarrow{\Phi_\omega})\Big(\vec \xi-b\partial_x\overrightarrow{\Phi_\omega}\Big),\Big(\vec \xi-b\partial_x\overrightarrow{\Phi_\omega}\Big)\Big\rangle\\
&=\Big\langle S_\omega''(\overrightarrow{\Phi_\omega})\vec \xi, \vec \xi\Big\rangle-2b\Big\langle S_\omega''(\overrightarrow{\Phi_\omega}) \partial_x\overrightarrow{\Phi_\omega}, \vec \xi\Big\rangle+b^2\Big\langle S_\omega''(\overrightarrow{\Phi_\omega}) \partial_x\overrightarrow{\Phi_\omega},\partial_x\overrightarrow{\Phi_\omega}\Big\rangle\\
&=\Big\langle S_\omega''(\overrightarrow{\Phi_\omega})\vec \xi, \vec \xi\Big\rangle\gtrsim \|\vec \xi\|_{H^1\times L^2}^2,
\end{align*}
where we have used the self-adjoint property of the operator $S_\omega''(\overrightarrow{\Phi_\omega}) $ in the second step.

Now we claim that $  \|\vec \xi\|_{H^1\times L^2}^2   \gtrsim  \|\vec \eta\|_{H^1\times L^2}^2$. Indeed, using the orthogonality assumption \eqref{orthxi}, we have
\begin{align*}
\Big\langle\vec \xi, \overrightarrow{\Gamma_\omega}\Big\rangle
=\Big\langle \vec\eta+b\partial_x\overrightarrow{\Phi_\omega},\overrightarrow{\Gamma_\omega}\Big\rangle
=-b\int_\mathbb R (\phi_\omega, -\omega \phi_\omega)\cdot \left(\begin{array}{c}
\phi_\omega\\
0
\end{array}\right)
=-b\|\phi_\omega\|_{L^2}^2.
\end{align*}
Thus, by H\"older's inequality, we have
\begin{align}\label{b}
|b|=\frac{\Big|\Big\langle\vec \xi, \overrightarrow{\Gamma_\omega}\Big\rangle\Big|}{\|\phi_\omega\|_{L^2}^2}\lesssim\|\vec \xi\|_{H^1\times L^2}.
\end{align}
Now from (\ref{b}),
\begin{align*}
\|\vec \eta\|_{H^1\times L^2}&=\Big\|\vec \xi-b\partial_x\overrightarrow{\Phi_\omega}\Big\|_{H^1\times L^2}
\leq\|\vec \xi\|_{H^1\times L^2}+|b| \Big\|\partial_x\overrightarrow{\Phi_\omega}\Big\|_{H^1\times L^2}
\lesssim \|\vec \xi\|_{H^1\times L^2}.
\end{align*}
This completes the proof.
\end{proof}
\vskip 2cm

\section{Modulation}\label{sec:modulation}
\vskip 0.2cm
We now suppose for contradiction that the solitary wave solution is stable; that is, for any $\varepsilon >0$, there exists $\delta >0$ such that when
\begin{align*}
\| \vec u_0-\overrightarrow{\Phi_\omega}\|_{H^1\times L^2}<\delta,
\end{align*}
we have
\begin{align}\label{stable1}
\vec u \in U_\varepsilon \big(\overrightarrow{\Phi_\omega}\big).
\end{align}
Then the modulation theory shows that by choosing suitable parameters, the orthogonality conditions in Corollary \ref{cor:orth2} can be verified. The modulation is obtained via the standard implicit function theorem.
\begin{prop}\label{prop:modulation}
(Modulation). Let $|\omega|=\omega_c$. There exists $\varepsilon_0>0$ such that for any $\varepsilon\in(0,\varepsilon_0)$, $\vec u\in U_\varepsilon \big(\overrightarrow{\Phi_\omega}\big)$, the following properties are verified. There exist $C^1$-functions
$$
y:\R\rightarrow \R,\quad
\lambda: \R\rightarrow \R^+
$$
such that if we define $\vec\eta$ by
\begin{align}\label{change}
\vec\eta(t)=\vec u\big(t,\cdot+y(t)\big)-\overrightarrow{\Phi_{\lambda(t)}},
\end{align}
then $\vec \eta$ satisfies the following orthogonality conditions for any $t\in\R$:
\begin{align}
\Big\langle\vec\eta,\overrightarrow{\Gamma_{\lambda(t)}}\Big\rangle
=\Big\langle\vec\eta,\overrightarrow{\Psi_{\lambda(t)}}\Big\rangle=0,\label{orthlambda}
\end{align}
where $\overrightarrow{\Gamma_\lambda}\in H^1(\mathbb R)\times L^2(\mathbb R)$ and $\partial_x\overrightarrow{\Gamma_\lambda}=\overrightarrow{\Psi_\lambda}=\left(\begin{array}{c}
\phi_\lambda\\
0
\end{array}\right).$
Moreover, the following estimate verifies that
\begin{align}\label{moreover}
 \|\vec\eta\|_{H^1\times L^2}+|\lambda-\omega|\lesssim \varepsilon.
\end{align}
\end{prop}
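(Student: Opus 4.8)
The plan is to prove the modulation proposition via the Implicit Function Theorem, treating the orthogonality conditions \eqref{orthlambda} as a system of two equations in the two unknowns $y$ and $\lambda$. First I would set up the map. For a given $\vec u\in U_\varepsilon\big(\overrightarrow{\Phi_\omega}\big)$ and parameters $(y,\lambda)$, define $\vec\eta(y,\lambda)=\vec u(\cdot+y)-\overrightarrow{\Phi_\lambda}$ and consider
\begin{align*}
F(\vec u,y,\lambda)=\left(\begin{array}{c}
\big\langle \vec u(\cdot+y)-\overrightarrow{\Phi_\lambda},\overrightarrow{\Gamma_\lambda}\big\rangle\\[2pt]
\big\langle \vec u(\cdot+y)-\overrightarrow{\Phi_\lambda},\overrightarrow{\Psi_\lambda}\big\rangle
\end{array}\right).
\end{align*}
At the base point $\vec u=\overrightarrow{\Phi_\omega}$, $y=0$, $\lambda=\omega$, we have $\vec\eta=\vec 0$, so both components of $F$ vanish; this is the reference solution around which the implicit function theorem is applied.

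The main computation is the invertibility of the Jacobian $\partial_{(y,\lambda)}F$ at the base point. Differentiating in $y$ pulls a $\partial_x$ onto $\vec u(\cdot+y)$, and at the base point $\vec u(\cdot+y)=\overrightarrow{\Phi_\omega}$, so $\partial_y$ produces $\partial_x\overrightarrow{\Phi_\omega}$ paired against $\overrightarrow{\Gamma_\omega}$ and $\overrightarrow{\Psi_\omega}$; differentiating in $\lambda$ produces $-\partial_\omega\overrightarrow{\Phi_\omega}$ paired against the same test vectors, together with the $\lambda$-derivatives of $\overrightarrow{\Gamma_\lambda},\overrightarrow{\Psi_\lambda}$, but those latter terms are multiplied by $\vec\eta=\vec0$ at the base point and drop out. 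The key entries are therefore $\langle\partial_x\overrightarrow{\Phi_\omega},\overrightarrow{\Gamma_\omega}\rangle$, $\langle\partial_x\overrightarrow{\Phi_\omega},\overrightarrow{\Psi_\omega}\rangle$, $-\langle\partial_\omega\overrightarrow{\Phi_\omega},\overrightarrow{\Gamma_\omega}\rangle$, $-\langle\partial_\omega\overrightarrow{\Phi_\omega},\overrightarrow{\Psi_\omega}\rangle$. Using $\partial_x\overrightarrow{\Gamma_\omega}=\overrightarrow{\Psi_\omega}=(\phi_\omega,0)^T$ and integration by parts, together with the fact that $\phi_\omega$ is even (so $\partial_x\phi_\omega$ is odd), I expect the off-diagonal entry $\langle\partial_x\overrightarrow{\Phi_\omega},\overrightarrow{\Psi_\omega}\rangle=\int \partial_x\phi_\omega\,\phi_\omega\dx=0$ to vanish, making the Jacobian triangular. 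The diagonal entries $\langle\partial_x\overrightarrow{\Phi_\omega},\overrightarrow{\Gamma_\omega}\rangle=-\|\phi_\omega\|_{L^2}^2$ (after integrating by parts and using $\partial_x\overrightarrow{\Gamma_\omega}=\overrightarrow{\Psi_\omega}$) and $-\langle\partial_\omega\overrightarrow{\Phi_\omega},\overrightarrow{\Psi_\omega}\rangle=-\int\partial_\omega\phi_\omega\,\phi_\omega\dx=-\tfrac12\partial_\omega\|\phi_\omega\|_{L^2}^2$ are both nonzero; by the explicit scaling \eqref{rescaling} one checks $\partial_\omega\|\phi_\omega\|_{L^2}^2\neq0$ at $|\omega|=\omega_c$ (indeed \eqref{rescaling} gives $\|\phi_\omega\|_{L^2}^2=(1-\omega^2)^{\frac2p-\frac12}\|\phi_0\|_{L^2}^2$, whose derivative does not vanish there), so the determinant is nonzero.

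With the Jacobian invertible at the base point, the Implicit Function Theorem yields a neighborhood and unique $C^1$-functions $y=y(\vec u)$, $\lambda=\lambda(\vec u)$ solving $F(\vec u,y,\lambda)=0$, with $y(\overrightarrow{\Phi_\omega})=0$, $\lambda(\overrightarrow{\Phi_\omega})=\omega$. Because every $\vec u(t)$ stays in $U_\varepsilon\big(\overrightarrow{\Phi_\omega}\big)$ by the contradiction hypothesis \eqref{stable1}, and the flow $t\mapsto\vec u(t)$ is continuous in $H^1\times L^2$, composing with the $C^1$-maps gives $C^1$-functions $y(t),\lambda(t)$ and defines $\vec\eta(t)$ by \eqref{change} satisfying \eqref{orthlambda} for all $t$. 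Finally, the estimate \eqref{moreover} follows from the Lipschitz continuity of the implicit functions at the base point: since $F$ is $C^1$ and $y,\lambda$ depend continuously on $\vec u$ with $\|\vec u(\cdot+y)-\overrightarrow{\Phi_\omega}\|_{H^1\times L^2}$ controlled by $\inf_y\|\vec u-\overrightarrow{\Phi_\omega}(\cdot-y)\|_{H^1\times L^2}<\varepsilon$, we get $|\lambda-\omega|\lesssim\varepsilon$ and then $\|\vec\eta\|_{H^1\times L^2}\le\|\vec u(\cdot+y)-\overrightarrow{\Phi_\omega}\|_{H^1\times L^2}+\|\overrightarrow{\Phi_\omega}-\overrightarrow{\Phi_\lambda}\|_{H^1\times L^2}\lesssim\varepsilon$.

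I expect the main obstacle to be the careful verification that the Jacobian determinant is nonzero precisely at the degenerate frequency $|\omega|=\omega_c$. This is delicate because $\partial_\lambda Q\big(\overrightarrow{\Phi_\lambda}\big)\big|_{\lambda=\omega}=0$ there (Lemma \ref{lem:partialQ}), so the standard orthogonality setup using the kernel direction $\partial_\omega\overrightarrow{\Phi_\omega}$ would degenerate; the whole point of introducing $\overrightarrow{\Gamma_\omega}$ with $\partial_x\overrightarrow{\Gamma_\omega}=\overrightarrow{\Psi_\omega}$ (rather than pairing against a kernel element) is to produce a nondegenerate Jacobian even in this critical case. Confirming that $\partial_\omega\|\phi_\omega\|_{L^2}^2\neq0$ at $|\omega|=\omega_c$ via \eqref{rescaling}, and that the off-diagonal terms genuinely vanish by the evenness of $\phi_\omega$, is therefore the crux that makes the implicit function theorem applicable.
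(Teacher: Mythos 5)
Your proposal is correct and follows essentially the same route as the paper: the Implicit Function Theorem applied to the two functionals $\big\langle\vec\eta,\overrightarrow{\Gamma_\lambda}\big\rangle$ and $\big\langle\vec\eta,\overrightarrow{\Psi_\lambda}\big\rangle$, with the Jacobian made effectively triangular by the parity argument ($\int_\R\partial_x\phi_\omega\,\phi_\omega\dx=0$), the entry $\big\langle\partial_x\overrightarrow{\Phi_\omega},\overrightarrow{\Gamma_\omega}\big\rangle=-\|\phi_\omega\|_{L^2}^2$ obtained by integration by parts, and the nonvanishing of $\partial_\omega\|\phi_\omega\|_{L^2}^2$ at $|\omega|=\omega_c$ checked via the scaling \eqref{rescaling} (the paper gets $\tfrac{1}{2\omega}\|\phi_\omega\|_{L^2}^2$ for this entry, matching your value). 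Your closing remarks on why $\overrightarrow{\Gamma_\omega}$ replaces the kernel direction, and the final estimates for \eqref{moreover}, also coincide with the paper's argument.
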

\begin{proof}
We use the implicit function theorem to prove this proposition. Here we only give the important steps of the proof and refer the reader to \cite{Weinstein-SJMA-85,Weinstein-CPAM-86,MePa-InvenMath-04, MePa-Annal-05} for the similar argument.
Define
\begin{align*}
p=(\vec u;&\lambda,y), \qquad p_0=(\overrightarrow{\Phi_\omega};\omega,0).
\end{align*}
Let $\varepsilon$ be the parameter decided later, and define the functional pair $(F_1,F_2): U_\varepsilon \big(\overrightarrow{\Phi_\omega}\big)\times \R\times \R^+\rightarrow \R^2$ as
\begin{align*}
&F_1(p)=\Big\langle\vec \eta, \overrightarrow{\Gamma_\lambda}\Big\rangle,\quad
F_2(p)=\Big\langle\vec \eta, \overrightarrow{\Psi_\lambda}\Big\rangle.
\end{align*}
We claim that there exists $\varepsilon_0>0$, such that for any $\varepsilon\in (0,\varepsilon_0)$, there exists a unique $C^1$ map: $ U_\varepsilon \big(\overrightarrow{\Phi_\omega}\big)\rightarrow \R^+\times \R$ 
such that $(F_1(p), F_2(p))=0$.

Indeed, firstly we have
$$F_1(p_0)=F_2(p_0)=0.$$
Second, we prove that
\[
|J|=\left|\begin{array}{cc}
\partial_\lambda F_1 & \partial_y F_1\\
\partial_\lambda F_2 & \partial_y F_2
\end{array}\right|_{p=p_0}\not=0.
\]
Indeed, a direct calculation gives that
\begin{align*}
\partial_\lambda F_1(p) &
=\partial_\lambda \Big\langle\vec \eta, \overrightarrow{\Gamma_\lambda}\Big\rangle
=\partial_\lambda \Big\langle\vec u\big(t,x+y(t)\big)-\overrightarrow{\Phi_{\lambda(t)}}, \overrightarrow{\Gamma_\lambda}\Big\rangle\\
&=\Big\langle\vec u\big(t,x+y(t)\big)-\overrightarrow{\Phi_{\lambda(t)}}, \partial_\lambda\overrightarrow{\Gamma_\lambda}\Big\rangle
-\Big\langle\partial_\lambda\overrightarrow{\Phi_{\lambda(t)}}, \overrightarrow{\Gamma_\lambda}\Big\rangle.
\end{align*}
When $p=p_0$, we observe that $\vec u\big(t,x+y(t)\big)-\overrightarrow{\Phi_{\lambda(t)}}=0$, and the first term vanishes. For the second term, we note that $\overrightarrow{\Gamma_\lambda}$ is an odd vector and $\partial_\lambda\overrightarrow{\Phi_{\lambda(t)}}$ is an even vector, so we get
\begin{align*}
\partial_\lambda F_1(p)\Big|_{p=p_0}=0.
\end{align*}
A similar computation shows that
\begin{align*}
\partial_y F_1(p) \Big|_{p=p_0}&=\Big\langle\partial_x\vec u(x+y),\overrightarrow{\Gamma_\lambda}\Big\rangle\Big|_{p=p_0}=\Big\langle\partial_x\overrightarrow{\Phi_\lambda},\overrightarrow{\Gamma_\lambda}
\Big\rangle\Big|_{p=p_0}=-\|\phi_\omega\|_{L^2}^2;\\
\partial_\lambda F_2(p) \Big|_{p=p_0}&=-\Big\langle\partial_\lambda\overrightarrow{\Phi_\lambda},\overrightarrow{\Psi_\lambda}\Big\rangle\Big|_{p=p_0}=
-\Big\langle\partial_\lambda{\phi_\lambda},{\phi_\lambda}\Big\rangle\Big|_{p=p_0}=-\frac{1}{2}\partial_\lambda\|\phi_\lambda\|_{L^2}^2\Big|_{p=p_0}=
\frac{1}{2\omega}\|\phi_\omega\|_{L^2}^2;\\
\partial_y F_2(p) \Big|_{p=p_0}&=\Big\langle\partial_x\overrightarrow{\Phi_\lambda},\overrightarrow{\Psi_\lambda}
\Big\rangle\Big|_{p=p_0}
=\int_\mathbb R \partial_x \phi_\lambda \phi_\lambda \dx \Big|_{p=p_0}=0.
\end{align*}
Then we find that
\begin{align*}
\left|\begin{array}{cc}
\partial_\lambda F_1 & \partial_y F_1\\
\partial_\lambda F_2 & \partial_y F_2
\end{array}\right|_{p=p_0}=\frac{1}{2\omega}\|\phi_\omega\|_{L^2}^4\not=0.
\end{align*}
Therefore, the implicit function theorem implies that there exists $\varepsilon_0>0$
such that for any $\varepsilon\in(0,\varepsilon_0)$,
$\vec u\in U_\varepsilon \big(\overrightarrow{\Phi_\omega}\big)$, there exist unique $C^1$-functions
$$
y:U_\varepsilon \big(\overrightarrow{\Phi_\omega}\big)\rightarrow \R,\quad
\lambda: U_\varepsilon \big(\overrightarrow{\Phi_\omega}\big)\rightarrow \R^+,
$$
such that
\begin{align}\
\Big\langle\vec\eta,\overrightarrow{\Gamma_{\lambda}}\Big\rangle
=\Big\langle\vec\eta,\overrightarrow{\Psi_{\lambda}}\Big\rangle=0.
\end{align}
Furthermore,
\begin{align*}
\left(\begin{matrix}
\partial_u \lambda & \partial_v \lambda \\
\partial_u y & \partial_v y
\end{matrix}\right)=J^{-1}\left(\begin{matrix}
\partial_uF_1 & \partial_v F_1 \\
\partial_uF_2 & \partial_v F_2
\end{matrix}\right).
\end{align*}
This implies that
\begin{align*}
|\lambda-\omega|\lesssim\|\vec u-\overrightarrow{\Phi_\omega}\|_{H^1\times L^2}<\varepsilon.
\end{align*}
This finishes the proof of the proposition.
\end{proof}

\vskip 2cm
\section{Dynamic of the parameters}\label{sec:dynamic}
\vskip 0.2cm
In this section, we control the modulation parameters $y$ and $\lambda$. The effect of giving a precise control on modulation parameters is to obtain the structure of $I'(t)$ in Section \ref{sec:mainthm}. The main result is the following.
\begin{prop}\label{prop:y-lambda}
Let $\vec u=(u,v)^T$ be the solution of \eqref{eq:ut} with $\vec u\in U_\varepsilon \big(\overrightarrow{\Phi_\omega}\big)$, where $\varepsilon$ is obtained in Proposition \ref{prop:modulation}. Let $y,\lambda$, $ \vec\eta=(\xi,\eta)^T$ be the parameters and vector obtained in Proposition \ref{prop:modulation};
then
\begin{align*}
\dot y-\lambda=\|\phi_\lambda\|_{L^2}^{-2}\Big[Q\big(\overrightarrow{\Phi_\lambda}\big)-Q\big(\overrightarrow{\Phi_\omega}\big)\Big]
-\|\phi_\lambda\|_{L^2}^{-2}\Big[Q(\vec u_0)-Q\big(\overrightarrow{\Phi_\omega}\big)\Big]+O(\|\vec\eta\|_{H^1\times L^2}^2)
\end{align*}
and
\begin{align*}
\dot\lambda=O\big(\|\vec\eta\|_{H^1\times L^2}\big).
\end{align*}
\end{prop}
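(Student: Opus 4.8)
The plan is to pass to the frame moving with the soliton and to read off $\dot\lambda$ and $\dot y-\lambda$ from the time derivatives of the two orthogonality conditions \eqref{orthlambda}. Set $J=\begin{pmatrix}0&1\\1&0\end{pmatrix}$, so that \eqref{eq:ut} reads $\partial_t\vec u=\partial_x JE'(\vec u)$, and recall from \eqref{eq:Q'u} that $Q'(\vec u)=J\vec u$ and $Q''=J$. Since $S_\lambda'(\overrightarrow{\Phi_\lambda})=\vec0$ we have $E'(\overrightarrow{\Phi_\lambda})=-\lambda Q'(\overrightarrow{\Phi_\lambda})$, i.e. $JE'(\overrightarrow{\Phi_\lambda})=-\lambda\overrightarrow{\Phi_\lambda}$; moreover $E''=S_\lambda''-\lambda J$ gives $JE''(\overrightarrow{\Phi_\lambda})\vec\eta=JS_\lambda''(\overrightarrow{\Phi_\lambda})\vec\eta-\lambda\vec\eta$. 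Substituting \eqref{change} into $\partial_t\vec u=\partial_x JE'(\vec u)$ after the change of variables $x\mapsto x+y(t)$ and Taylor expanding $E'$ around $\overrightarrow{\Phi_\lambda}$, I obtain the evolution equation
\begin{align*}
\partial_t\vec\eta=-\dot\lambda\,\partial_\lambda\overrightarrow{\Phi_\lambda}+(\dot y-\lambda)\,\partial_x\overrightarrow{\Phi_\lambda}+\partial_x JE''(\overrightarrow{\Phi_\lambda})\vec\eta+\dot y\,\partial_x\vec\eta+\mathcal N,
\end{align*}
where $\mathcal N$ is the nonlinear remainder, quadratic in $\vec\eta$, so that after pairing and one integration by parts it contributes only $O(\|\vec\eta\|_{H^1\times L^2}^2)$.

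Next I differentiate \eqref{orthlambda} in $t$, which gives $\langle\partial_t\vec\eta,\overrightarrow{\Psi_\lambda}\rangle=-\dot\lambda\langle\vec\eta,\partial_\lambda\overrightarrow{\Psi_\lambda}\rangle$ and $\langle\partial_t\vec\eta,\overrightarrow{\Gamma_\lambda}\rangle=-\dot\lambda\langle\vec\eta,\partial_\lambda\overrightarrow{\Gamma_\lambda}\rangle$, both $O(|\dot\lambda|\,\|\vec\eta\|_{H^1\times L^2})$. Pairing the evolution equation against $\overrightarrow{\Psi_\lambda}$ and $\overrightarrow{\Gamma_\lambda}$ then yields two scalar equations for $(\dot\lambda,\dot y-\lambda)$. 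The bookkeeping is organized by parity: since $\phi_\lambda$ is even, the vectors $\overrightarrow{\Phi_\lambda},\partial_\lambda\overrightarrow{\Phi_\lambda},\overrightarrow{\Psi_\lambda}$ are even while $\partial_x\overrightarrow{\Phi_\lambda}$ and $\overrightarrow{\Gamma_\lambda}$ are odd.

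For $\dot\lambda$ I test against the even vector $\overrightarrow{\Psi_\lambda}$. The term $(\dot y-\lambda)\langle\partial_x\overrightarrow{\Phi_\lambda},\overrightarrow{\Psi_\lambda}\rangle$ vanishes by parity, the coefficient of $\dot\lambda$ is $-\langle\partial_\lambda\overrightarrow{\Phi_\lambda},\overrightarrow{\Psi_\lambda}\rangle=-\tfrac12\partial_\lambda\|\phi_\lambda\|_{L^2}^2$, which by the rescaling \eqref{rescaling} is nonzero and bounded away from $0$ for $\lambda$ near $\omega\neq0$, and the remaining terms $\langle\partial_x JE''(\overrightarrow{\Phi_\lambda})\vec\eta,\overrightarrow{\Psi_\lambda}\rangle$ and $\dot y\langle\partial_x\vec\eta,\overrightarrow{\Psi_\lambda}\rangle$ are $O(\|\vec\eta\|_{H^1\times L^2})$. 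Treating the two scalar equations as a near-diagonal $2\times2$ linear system in $(\dot\lambda,\dot y-\lambda)$, invertible for $\|\vec\eta\|_{H^1\times L^2}$ small, produces $\dot\lambda=O(\|\vec\eta\|_{H^1\times L^2})$.

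For the fine estimate I test against the odd vector $\overrightarrow{\Gamma_\lambda}$. The $\dot\lambda$–term drops by parity, and the main term is (integrating by parts) $(\dot y-\lambda)\langle\partial_x\overrightarrow{\Phi_\lambda},\overrightarrow{\Gamma_\lambda}\rangle=-(\dot y-\lambda)\|\phi_\lambda\|_{L^2}^2$, using $\partial_x\overrightarrow{\Gamma_\lambda}=\overrightarrow{\Psi_\lambda}$ and $\langle\overrightarrow{\Phi_\lambda},\overrightarrow{\Psi_\lambda}\rangle=\|\phi_\lambda\|_{L^2}^2$. The crucial cancellations are: (i) $\dot y\langle\partial_x\vec\eta,\overrightarrow{\Gamma_\lambda}\rangle=-\dot y\langle\vec\eta,\overrightarrow{\Psi_\lambda}\rangle=0$ after integration by parts (the boundary terms vanish since $\xi\in H^1$ decays), by the second condition in \eqref{orthlambda}; and (ii) the linear term reduces, via the relation $JE''(\overrightarrow{\Phi_\lambda})\vec\eta=JS_\lambda''(\overrightarrow{\Phi_\lambda})\vec\eta-\lambda\vec\eta$ above, self-adjointness of $S_\lambda''$, and the identity $S_\lambda''(\overrightarrow{\Phi_\lambda})J\overrightarrow{\Psi_\lambda}=Q'(\overrightarrow{\Phi_\lambda})+2\lambda\overrightarrow{\Psi_\lambda}$ (a direct computation from \eqref{eq:S''Phi}), to $\langle\partial_x JE''(\overrightarrow{\Phi_\lambda})\vec\eta,\overrightarrow{\Gamma_\lambda}\rangle=-\langle Q'(\overrightarrow{\Phi_\lambda}),\vec\eta\rangle$, the spurious $2\lambda\overrightarrow{\Psi_\lambda}$ and the $-\lambda\partial_x\vec\eta$ piece both being killed by $\langle\vec\eta,\overrightarrow{\Psi_\lambda}\rangle=0$. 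Finally, since $Q$ is quadratic and conserved, $Q(\vec u_0)=Q(\overrightarrow{\Phi_\lambda}+\vec\eta)=Q(\overrightarrow{\Phi_\lambda})+\langle Q'(\overrightarrow{\Phi_\lambda}),\vec\eta\rangle+Q(\vec\eta)$, whence $\langle Q'(\overrightarrow{\Phi_\lambda}),\vec\eta\rangle=\bigl[Q(\vec u_0)-Q(\overrightarrow{\Phi_\omega})\bigr]-\bigl[Q(\overrightarrow{\Phi_\lambda})-Q(\overrightarrow{\Phi_\omega})\bigr]+O(\|\vec\eta\|_{H^1\times L^2}^2)$. Inserting this and dividing by $\|\phi_\lambda\|_{L^2}^2$ yields the claimed identity for $\dot y-\lambda$. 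The main obstacle is precisely the two cancellations (i)–(ii): the design of $\overrightarrow{\Gamma_\lambda}$ with $\partial_x\overrightarrow{\Gamma_\lambda}=\overrightarrow{\Psi_\lambda}$ together with the orthogonality $\langle\vec\eta,\overrightarrow{\Psi_\lambda}\rangle=0$ is what removes all the $O(\|\vec\eta\|_{H^1\times L^2})$ contributions and leaves the single linear term $\langle Q'(\overrightarrow{\Phi_\lambda}),\vec\eta\rangle$, the source of the momentum differences; the residual errors, together with the off-diagonal coupling of the two scalar equations, are collected into $O(\|\vec\eta\|_{H^1\times L^2}^2+|\lambda-\omega|\,\|\vec\eta\|_{H^1\times L^2})$.
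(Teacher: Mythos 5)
Your proof is correct and is essentially the paper's own argument in vectorial packaging: because $\overrightarrow{\Psi_\lambda}$ and $\overrightarrow{\Gamma_\lambda}$ both have vanishing second component, pairing your evolution equation for $\vec\eta$ with them is exactly the paper's procedure of substituting the modulation ansatz \eqref{uv} into the first (exactly linear) equation $u_t=v_x$ and testing against $\gamma_\lambda$ and $\phi_\lambda$ (Lemma \ref{para}), followed by the same parity and orthogonality cancellations, the same near-diagonal $2\times 2$ inversion, and the same conversion of $\langle \eta,\phi_\lambda\rangle$ into momentum differences via conservation of $Q$ (your last display is precisely Lemma \ref{lem:doty-lambda}). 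The one imprecision worth flagging is the claim that $\mathcal N$ is quadratic in $\vec\eta$ --- for $p<1$ the Taylor remainder of $|u|^p u$ is only $O(|\xi|^{1+p})$ --- but it is harmless here: $\mathcal N$ has zero first component (the nonlinearity sits only in the first component of $E'$, hence in the second component of $\partial_x J E'$), while your test vectors have zero second component, so those pairings vanish identically, which is also why the paper, by working with the first equation of \eqref{eq:ut} only, never encounters a nonlinear term at all.
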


The proof of the proposition is split into the following two lemmas. The first lemma is
\begin{lem}\label{para}
Under the same assumption in Proposition \ref{prop:y-lambda}, we have
\begin{align*}
\dot y-\lambda=-\|\phi_\lambda\|_{L^2}^{-2}\langle\eta,\phi_\lambda\rangle+O\big(\|\vec\eta\|_{H^1\times L^2}^2\big),
\end{align*}
and
\begin{align*}
\dot\lambda=O\big(\|\vec\eta\|_{H^1\times L^2}\big).
\end{align*}
\end{lem}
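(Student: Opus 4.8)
The idea is to differentiate the two orthogonality conditions \eqref{orthlambda} in time and solve the resulting $2\times 2$ linear system for the unknowns $\dot y$ and $\dot\lambda$. The dynamics of $\vec\eta$ are governed by the equation for $\vec u$, so first I would write down the evolution equation satisfied by $\vec\eta$ after the change of variables \eqref{change}. Since $\vec u(t,\cdot+y(t))=\vec\eta+\overrightarrow{\Phi_{\lambda}}$, applying $\partial_t$ and using that $\vec u$ solves \eqref{eq:ut} (written abstractly as $\partial_t\vec u=JE'(\vec u)$ for the skew operator $J=\left(\begin{smallmatrix}0&\partial_x\\\partial_x&0\end{smallmatrix}\right)$) yields a relation of the schematic form
\begin{align*}
\partial_t\vec\eta=(\dot y-\lambda)\,\partial_x\big(\vec\eta+\overrightarrow{\Phi_\lambda}\big)-\dot\lambda\,\partial_\lambda\overrightarrow{\Phi_\lambda}+J S_\lambda''\big(\overrightarrow{\Phi_\lambda}\big)\vec\eta+(\text{nonlinear in }\vec\eta),
\end{align*}
where I have used $S_\lambda'(\overrightarrow{\Phi_\lambda})=\vec0$ to cancel the leading soliton term and grouped the $O(\|\vec\eta\|^2)$ Taylor remainder into the last piece.

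**Extracting the two scalar equations.**
Next I would differentiate $\langle\vec\eta,\overrightarrow{\Psi_\lambda}\rangle=0$ and $\langle\vec\eta,\overrightarrow{\Gamma_\lambda}\rangle=0$ in $t$, producing
\begin{align*}
\langle\partial_t\vec\eta,\overrightarrow{\Psi_\lambda}\rangle+\dot\lambda\langle\vec\eta,\partial_\lambda\overrightarrow{\Psi_\lambda}\rangle=0,\qquad
\langle\partial_t\vec\eta,\overrightarrow{\Gamma_\lambda}\rangle+\dot\lambda\langle\vec\eta,\partial_\lambda\overrightarrow{\Gamma_\lambda}\rangle=0.
\end{align*}
Substituting the expression for $\partial_t\vec\eta$ and keeping track of which inner products survive, the key structural facts are: the pairing $\langle\partial_x\overrightarrow{\Phi_\lambda},\overrightarrow{\Psi_\lambda}\rangle=\int\partial_x\phi_\lambda\,\phi_\lambda\,dx=0$ by evenness (this is exactly the nondegeneracy computed in Proposition \ref{prop:modulation}), while $\langle\partial_x\overrightarrow{\Phi_\lambda},\overrightarrow{\Gamma_\lambda}\rangle=-\|\phi_\lambda\|_{L^2}^2$. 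This decoupling is what makes the leading coefficient of $\dot y$ in the $\overrightarrow{\Gamma_\lambda}$-equation equal to $-\|\phi_\lambda\|_{L^2}^2$ and what forces the $\dot\lambda$ equation to have no zeroth-order-in-$\vec\eta$ source, giving $\dot\lambda=O(\|\vec\eta\|_{H^1\times L^2})$ directly. For the $\dot y$ equation, the term $\langle JS_\lambda''(\overrightarrow{\Phi_\lambda})\vec\eta,\overrightarrow{\Gamma_\lambda}\rangle$ must be reorganized: moving $J$ and $S_\lambda''$ onto $\overrightarrow{\Gamma_\lambda}$ (self-adjointness of $S_\lambda''$, skew-adjointness of $J$) and using $\partial_x\overrightarrow{\Gamma_\lambda}=\overrightarrow{\Psi_\lambda}=S_\lambda''(\overrightarrow{\Phi_\lambda})\vec\psi_\lambda$ from Lemma \ref{lem:S''=Psi}, one expects the linear-in-$\vec\eta$ contribution to collapse precisely to a multiple of $\langle\eta,\phi_\lambda\rangle$, yielding the claimed formula $\dot y-\lambda=-\|\phi_\lambda\|_{L^2}^{-2}\langle\eta,\phi_\lambda\rangle+O(\cdots)$.

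**The main obstacle.**
The delicate point is the bookkeeping of the cross terms of size $|\lambda-\omega|\|\vec\eta\|$ and the genuinely quadratic terms $\|\vec\eta\|^2$, and verifying that the linear-in-$\vec\eta$ terms arrange themselves into exactly $\langle\eta,\phi_\lambda\rangle$ with no leftover $\langle\xi,\cdot\rangle$ contribution. Concretely, the term $\langle JS_\lambda''(\overrightarrow{\Phi_\lambda})\vec\eta,\overrightarrow{\Gamma_\lambda}\rangle$ is the one to watch: after transferring operators one obtains $\langle\vec\eta,S_\lambda''(\overrightarrow{\Phi_\lambda})J^*\overrightarrow{\Gamma_\lambda}\rangle=-\langle\vec\eta,S_\lambda''(\overrightarrow{\Phi_\lambda})J\overrightarrow{\Gamma_\lambda}\rangle$, and computing $J\overrightarrow{\Gamma_\lambda}$ explicitly (it involves $\partial_x$ of the two components of $\overrightarrow{\Gamma_\lambda}$, one of which is $\phi_\lambda$) is where the identity $\partial_x\overrightarrow{\Gamma_\lambda}=\overrightarrow{\Psi_\lambda}$ pays off. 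The nonlinear remainder $|u|^pu-\phi_\lambda^{p+1}-(p+1)\phi_\lambda^p\xi$ is controlled in $L^2$ by $\|\vec\eta\|_{H^1\times L^2}^2$ via the exponential decay of $\phi_\lambda$ and Sobolev embedding, contributing only to the $O(\|\vec\eta\|^2)$ error. Throughout I would use the a priori smallness \eqref{moreover}, namely $\|\vec\eta\|_{H^1\times L^2}+|\lambda-\omega|\lesssim\varepsilon$, to justify that all error terms are of the stated orders, and the invertibility of the Gram-type $2\times2$ matrix (nonzero by the same Jacobian computation $\tfrac{1}{2\omega}\|\phi_\omega\|_{L^2}^4\ne0$ as in Proposition \ref{prop:modulation}) to solve uniquely for $\dot y$ and $\dot\lambda$.
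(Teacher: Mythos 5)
Your plan is, at its core, the same as the paper's proof: the paper derives exactly your two scalar relations (its \eqref{equality1} and \eqref{equality2}) by pairing the evolution equation against $\gamma_\lambda$ (the first component of $\overrightarrow{\Gamma_\lambda}$) and against $\phi_\lambda$, and then inverts the same $2\times 2$ matrix. Since $\overrightarrow{\Gamma_\lambda}=(\gamma_\lambda,0)^T$ and $\overrightarrow{\Psi_\lambda}=(\phi_\lambda,0)^T$ have vanishing second components, your vector pairings are literally those computations, and the structural facts you isolate are correct: $\langle\partial_x\overrightarrow{\Phi_\lambda},\overrightarrow{\Psi_\lambda}\rangle=0$ and $\langle\partial_\lambda\overrightarrow{\Phi_\lambda},\overrightarrow{\Gamma_\lambda}\rangle=0$ by parity, $\langle\partial_x\overrightarrow{\Phi_\lambda},\overrightarrow{\Gamma_\lambda}\rangle=-\|\phi_\lambda\|_{L^2}^2$, and the collapse $\langle JS_\lambda''(\overrightarrow{\Phi_\lambda})\vec\eta,\overrightarrow{\Gamma_\lambda}\rangle=-\langle\vec\eta,S_\lambda''(\overrightarrow{\Phi_\lambda})J\overrightarrow{\Gamma_\lambda}\rangle=-\langle\vec\eta,(\lambda\phi_\lambda,\phi_\lambda)^T\rangle=-\langle\eta,\phi_\lambda\rangle$, using $J\overrightarrow{\Gamma_\lambda}=(0,\phi_\lambda)^T$ and the orthogonality $\langle\xi,\phi_\lambda\rangle=0$.

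The one step that fails as written is your treatment of the nonlinear remainder. You propose to bound $\big\||u|^pu-\phi_\lambda^{p+1}-(p+1)\phi_\lambda^p\xi\big\|_{L^2}$ by $\|\vec\eta\|_{H^1\times L^2}^2$, but for $0<p<1$ (which is within the range of Theorem \ref{thm:main}) the map $s\mapsto|s|^ps$ is only $C^{1,p}$; in the region where $|\xi|\gtrsim\phi_\lambda$ (e.g.\ far from the soliton, where $\phi_\lambda$ is exponentially small) the remainder is genuinely of size $|\xi|^{1+p}$, so the best available bound is $O\big(\|\vec\eta\|_{H^1\times L^2}^{1+p}\big)$, and neither exponential decay of $\phi_\lambda$ nor Sobolev embedding improves this. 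That weaker error would propagate into the formula for $\dot y-\lambda$, hence into $\widetilde R(\vec u)$ in \eqref{Rtilde}, and the absorption at the end of Section \ref{sec:mainthm} (where errors must be dominated by $C_1a+C_2(\lambda-\omega)^2$ via Lemma \ref{lem:uppercontrol}) breaks down for small $p$. The rescue is structural, and it is precisely why the paper works only with the first equation $u_t=v_x$ of \eqref{eq:ut}: the nonlinearity occurs only in the first component of $E'(\vec u)$, so $JN(\vec\eta)$ has vanishing \emph{first} component, while both test vectors $\overrightarrow{\Gamma_\lambda},\overrightarrow{\Psi_\lambda}$ have vanishing \emph{second} component; hence $\langle JN(\vec\eta),\overrightarrow{\Gamma_\lambda}\rangle=\langle JN(\vec\eta),\overrightarrow{\Psi_\lambda}\rangle=0$ identically, and no estimate of the Taylor remainder is needed at all. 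With that observation inserted in place of your $O(\|\vec\eta\|^2)$ claim, your argument closes and reproduces the paper's proof; without it, your proof does not cover $0<p<1$.
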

\begin{proof}
Recall the definition $\vec\eta(t)=\vec u\big(t,\cdot+y(t)\big)-\overrightarrow{\Phi_{\lambda(t)}}$ in \eqref{change}, that is,
\begin{align}\label{uv}
 \left\{\begin{aligned}
            u(t,x)&=\phi_{\lambda}\big(x-y(t)\big)+\xi\big(t,x-y(t)\big),\\
            v(t,x)&=-\lambda\phi_{\lambda}\big(x-y(t)\big)+\eta\big(t,x-y(t)\big).
        \end{aligned}
 \right.
\end{align}
Using the first equation of the equivalent system \eqref{eq:ut}, we have
\begin{align}
 \dot\lambda\partial_\lambda\phi_\lambda          -(\dot y-\lambda) \partial_x \phi_\lambda=  -\dot\xi +(\dot y-\lambda)\partial_x \xi    +\lambda \partial_x\xi +\partial_x \eta.\label{substitute1}
\end{align}
We recall the definition of $\overrightarrow{\Gamma_\lambda}$ in Proposition \ref{prop:modulation} and denote $\gamma_\lambda$ as the first component of $\overrightarrow{\Gamma_\lambda}$. Now we multiply both sides of equality \eqref{substitute1} by $\gamma_\lambda$ and integrate to obtain
\begin{align}
\langle\dot\lambda\partial_\lambda\phi_\lambda,&\gamma_\lambda\rangle    -\langle(\dot y-\lambda)\partial_x\phi_\lambda,\gamma_\lambda\rangle\nonumber\\
&=\langle-\dot\xi,\gamma_\lambda\rangle   +(\dot y-\lambda)\langle\partial_x\xi,\gamma_\lambda\rangle   +\lambda\langle\partial_x\xi,\gamma_\lambda\rangle  +\langle\partial_x\eta,\gamma_\lambda\rangle.\label{simplify}
\end{align}
We know that $\phi_\lambda$ is an even function and $\gamma_\lambda$ is an odd function, so $\langle\dot\lambda\partial_\lambda\phi_\lambda,\gamma_\lambda\rangle=0$. By the orthogonality condition \eqref{orthlambda}, we have $$\langle\partial_x\xi,\gamma_\lambda\rangle=-\langle\vec\eta,\overrightarrow{\Psi_\lambda}\rangle=0,$$
so we get
$$\langle\dot\xi,\gamma_\lambda\rangle=\partial_t\langle \xi,\gamma_\lambda \rangle-\langle \xi,\partial_t\gamma_\lambda \rangle=\partial_t\langle \vec \eta,\overrightarrow{\Gamma_\lambda} \rangle-\langle \xi,\partial_t\gamma_\lambda \rangle = -\langle \xi,\partial_t\gamma_\lambda \rangle=- \dot \lambda\langle \xi,\partial_\lambda\gamma_\lambda \rangle.$$
Thus, we simplify equality \eqref{simplify} to obtain
\begin{align}
(\dot y-\lambda)\|\phi_\lambda\|_{L^2}^2     -\dot\lambda\langle\xi,\partial_\lambda\gamma_\lambda\rangle =-\langle\eta,\phi_\lambda\rangle.\label{equality1}
\end{align}
Next we multiply both sides of equality \eqref{substitute1} by the first component of $\overrightarrow{\Psi_\lambda}$ and integrate to obtain
\begin{align}
\langle\dot\lambda\partial_\lambda\phi_\lambda,&\phi_\lambda\rangle    -(\dot y-\lambda)\langle\partial_x\phi_\lambda,\phi_\lambda\rangle\nonumber\\
&=\langle-\dot\xi,\phi_\lambda\rangle   +(\dot y-\lambda)\langle\partial_x\xi,\phi_\lambda\rangle   +\langle\lambda\partial_x\xi,\phi_\lambda\rangle  +\langle\partial_x\eta,\phi_\lambda\rangle.\label{simplify2}
\end{align}
Now we consider the term in \eqref{simplify2} one by one.
From Lemma \ref{lem:partialQ}, we have $\partial_\lambda \|\phi_\lambda\|_{L^2}^2=- \frac{\|\phi_\lambda\|_{L^2}^2}{\lambda}$, so $$\langle\dot\lambda\partial_\lambda\phi_\lambda,\phi_\lambda\rangle=\dot\lambda \int_\mathbb R \phi_\lambda\partial_\lambda\phi_\lambda=\frac 12 \dot \lambda \partial_\lambda \| \phi_\lambda\|_{L^2}^2=-\frac{\dot \lambda}{2\lambda}\| \phi_\lambda\|_{L^2}^2.$$
The term $-(\dot y-\lambda)\langle\partial_x\phi_\lambda,\phi_\lambda\rangle$ vanishes as $\phi_\lambda$ is an even function. By the orthogonality condition \eqref{orthlambda}, we have
$$\langle\dot\xi,\phi_\lambda\rangle=\partial_t \langle\xi,\phi_\lambda\rangle-\langle\xi,\partial_t\phi_\lambda\rangle=\partial_t \langle\vec\eta,\overrightarrow{\Psi_{\lambda}}\rangle-\langle\xi,\partial_t\phi_\lambda\rangle=-\langle\xi,\partial_t\phi_\lambda\rangle.$$ Thus we simplify equality \eqref{simplify2} to obtain
\begin{align}
\dot\lambda\Big[-\frac{1}{2\lambda}\|\phi_\lambda\|_{L^2}^2-\langle\xi,\partial_\lambda\phi_\lambda\rangle\Big]+(\dot y-\lambda)\langle\xi,\partial_x\phi_\lambda\rangle =-\langle\lambda\xi+\eta,\partial_x\phi_\lambda\rangle.\label{equality2}
\end{align}
Since $\overrightarrow{\Psi_\lambda},\overrightarrow{\Gamma_\lambda},\overrightarrow{\Phi_\lambda}$ are smooth functions with exponential decay, combining \eqref{equality1} and \eqref{equality2}, we get
\begin{align}\label{combine}
 \left\{\begin{aligned}
&(\dot y-\lambda)\|\phi_\lambda\|_{L^2}^2     -\dot\lambda\langle\xi,\partial_\lambda\gamma_\lambda\rangle =-\langle\eta,\phi_\lambda\rangle,\\
&\dot\lambda\Big[-\frac{1}{2\lambda}\|\phi_\lambda\|_{L^2}^2-\langle\xi,\partial_\lambda\phi_\lambda\rangle\Big]+(\dot y-\lambda)\langle\xi,\partial_x\phi_\lambda\rangle =O(\|\vec\eta\|_{H^1\times L^2}).
        \end{aligned}
 \right.
\end{align}
We denote
\begin{align*}
 A=\left(\begin{matrix}
-\langle\xi, \partial_\lambda\gamma_\lambda\rangle & \|\phi_\lambda\|_{L^2}^2\\
-\frac{1}{2\lambda}\|\phi_\lambda\|_{L^2}^2-\langle\xi,\partial_\lambda\phi_\lambda\rangle & \langle\xi,\partial_x\phi_\lambda\rangle
\end{matrix}\right).
\end{align*}
Then by a direct calculation, we get
\begin{align*}
\left(\begin{array}{c}
\dot\lambda\\
\dot y-\lambda
\end{array}\right)=
A^{-1}\left(\begin{array}{c}
-\langle \eta,\phi_\lambda\rangle\\
O\big(\|\vec\eta\|_{H^1\times L^2}\big)
\end{array}\right)=
\left(\begin{array}{c}
O\big(\|\vec\eta\|_{H^1\times L^2}\big)\\
-\|\phi_\lambda\|_{L^2}^{-2}\langle\eta,\phi_\lambda\rangle+O\big(\|\vec\eta\|_{H^1\times L^2}^2\big)
\end{array}\right).
\end{align*}
This proves the lemma.
\end{proof}

The second lemma we need is the following.
\begin{lem}\label{lem:doty-lambda}
Under the same assumption in Proposition \ref{prop:y-lambda}, we have
\begin{align*}
\int_\mathbb R\eta\phi_\lambda\dx&=\Big[Q(\vec u_0)-Q\big(\overrightarrow{\Phi_\omega}\big)\Big]+\Big[Q\big(\overrightarrow{\Phi_\omega}\big)-Q\big(\overrightarrow{\Phi_\lambda}\big)\Big]
+O\big(\|\vec\eta\|_{H^1\times L^2}^2\big).
\end{align*}
\end{lem}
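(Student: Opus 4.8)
The plan is to compute the conserved momentum $Q(\vec u)$ in two ways and compare. The starting observation is that $Q(\vec u)=\int uv\dx$ is invariant under spatial translation, since translating both $u$ and $v$ by $y(t)$ leaves the $L^2$-pairing unchanged. Hence, recalling from \eqref{change} that $\vec u=\big(\vec\eta+\overrightarrow{\Phi_\lambda}\big)(\cdot-y(t))$, I would first write
\begin{align*}
Q(\vec u)=Q\big(\vec\eta+\overrightarrow{\Phi_\lambda}\big).
\end{align*}
With $\overrightarrow{\Phi_\lambda}=(\phi_\lambda,-\lambda\phi_\lambda)^T$ and $\vec\eta=(\xi,\eta)^T$, expanding the bilinear expression $\int(\phi_\lambda+\xi)(-\lambda\phi_\lambda+\eta)\dx$ produces four terms: the pure-soliton term $-\lambda\|\phi_\lambda\|_{L^2}^2$, which equals $Q\big(\overrightarrow{\Phi_\lambda}\big)$ by \eqref{Qphi}; the term $\langle\eta,\phi_\lambda\rangle$ we are after; a cross term $-\lambda\langle\xi,\phi_\lambda\rangle$; and the genuinely quadratic term $\langle\xi,\eta\rangle$.

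The decisive step is to eliminate the cross term $-\lambda\langle\xi,\phi_\lambda\rangle$. Here I would invoke the second orthogonality condition in \eqref{orthlambda}: since $\overrightarrow{\Psi_\lambda}=(\phi_\lambda,0)^T$, we have
\begin{align*}
\langle\xi,\phi_\lambda\rangle=\Big\langle\vec\eta,\overrightarrow{\Psi_\lambda}\Big\rangle=0,
\end{align*}
so this term vanishes identically. The remaining term $\langle\xi,\eta\rangle$ is controlled by $\|\vec\eta\|_{H^1\times L^2}^2$ via Cauchy--Schwarz, and is therefore absorbed into the error $O\big(\|\vec\eta\|_{H^1\times L^2}^2\big)$. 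This yields the identity
\begin{align*}
Q(\vec u)=Q\big(\overrightarrow{\Phi_\lambda}\big)+\langle\eta,\phi_\lambda\rangle+O\big(\|\vec\eta\|_{H^1\times L^2}^2\big).
\end{align*}

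To close the argument I would use conservation of momentum along the flow of \eqref{eq:ut}, namely $Q(\vec u)=Q(\vec u_0)$, and solve for $\langle\eta,\phi_\lambda\rangle$ to get $\langle\eta,\phi_\lambda\rangle=Q(\vec u_0)-Q\big(\overrightarrow{\Phi_\lambda}\big)+O\big(\|\vec\eta\|_{H^1\times L^2}^2\big)$. Finally, I would telescope the difference $Q(\vec u_0)-Q\big(\overrightarrow{\Phi_\lambda}\big)$ by inserting and subtracting $Q\big(\overrightarrow{\Phi_\omega}\big)$, splitting it into $\big[Q(\vec u_0)-Q(\overrightarrow{\Phi_\omega})\big]+\big[Q(\overrightarrow{\Phi_\omega})-Q(\overrightarrow{\Phi_\lambda})\big]$, which is exactly the claimed form. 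This lemma is essentially a clean algebraic identity rather than a hard estimate; the only point requiring care is making sure the single surviving error term is truly quadratic in $\vec\eta$, which is precisely what the orthogonality condition guarantees by removing the linear-in-$\xi$ contribution.
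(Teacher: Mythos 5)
Your proposal is correct and follows essentially the same route as the paper's proof: expand $Q(\vec u)=Q\big(\vec\eta+\overrightarrow{\Phi_\lambda}\big)$ into the four bilinear terms, kill the cross term $-\lambda\langle\xi,\phi_\lambda\rangle$ via the orthogonality condition $\langle\vec\eta,\overrightarrow{\Psi_\lambda}\rangle=0$, absorb $\langle\xi,\eta\rangle$ into the quadratic error, and then apply conservation of momentum together with the telescoping insertion of $Q\big(\overrightarrow{\Phi_\omega}\big)$. The only cosmetic difference is that you make the translation invariance of $Q$ explicit, which the paper uses silently.
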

\begin{proof}
Using equality (\ref{uv}) and the expression $Q(\vec u)=\int_\mathbb R uv\dx$, we have
\begin{align*}
Q(\vec u)&=Q\left(\begin{array}{c}
\phi_\lambda+\xi\\
-\lambda\phi_\lambda+\eta
\end{array}\right)\\
&= \int_\mathbb R -\lambda\phi_\lambda^2\dx    -\lambda \int_\mathbb R\xi\phi_\lambda\dx   +\int_\mathbb R\eta\phi_\lambda\dx    +\int_\mathbb R \xi\eta\dx.
\end{align*}
Now we analyse the last equality one by one. By \eqref{Qphi}, we have $Q\big(\overrightarrow{\Phi_\lambda}\big)=\int_\mathbb R -\lambda\phi_\lambda^2\dx$. Recall that we have the orthogonality condition $\Big\langle\vec\eta,\overrightarrow{\Psi_{\lambda(t)}}\Big\rangle=0$ in \eqref{orthlambda}, then
$$-\lambda \int_\mathbb R\xi\phi_\lambda\dx=-\lambda \int_\mathbb R \vec\eta\cdot \overrightarrow{\Psi_{\lambda(t)}}\dx =0.$$
The final term gives $\int_\mathbb R\xi\eta\dx=O\big(\|\vec\eta\|_{H^1\times L^2}^2\big)$. Therefore,
\begin{align*}
Q(\vec u)=Q\big(\overrightarrow{\Phi_\lambda}\big)+\int_\mathbb R\eta\phi_\lambda\dx+O(\|\vec\eta\|_{H^1\times L^2}^2).
\end{align*}
From the conservation law of momentum, we know
\begin{align*}
\int_\mathbb R\eta\phi_\lambda\dx&=Q(\vec u)-Q\big(\overrightarrow{\Phi_\lambda}\big)+O\big(\|\vec\eta\|_{H^1\times L^2}^2\big)\\
&=\Big[Q(\vec u_0)-Q\big(\overrightarrow{\Phi_\omega}\big)\Big]+\Big[Q\big(\overrightarrow{\Phi_\omega}\big)-Q\big(\overrightarrow{\Phi_\lambda}\big)\Big]
+O\big(\|\vec\eta\|_{H^1\times L^2}^2\big).
\end{align*}
This proves the lemma.
\end{proof}

Now we are ready to prove Proposition \ref{prop:y-lambda}.
\begin{proof}[Proof of Proposition \ref{prop:y-lambda}]
Combining the estimates obtained in Lemmas \ref{para} and \ref{lem:doty-lambda}, we have
\begin{align*}
\dot y-\lambda&=-\|\phi_\lambda\|_{L^2}^{-2}\int_\mathbb R\eta\phi_\lambda\dx+O(\|\vec\eta\|_{H^1\times L^2}^2)\\
&=\|\phi_\lambda\|_{L^2}^{-2}\Big[Q\big(\overrightarrow{\Phi_\lambda}\big)-Q\big(\overrightarrow{\Phi_\omega}\big)\Big]
-\|\phi_\lambda\|_{L^2}^{-2}\Big[Q(\vec u_0)-Q\big(\overrightarrow{\Phi_\omega}\big)\Big]+O\big(\|\vec\eta\|_{H^1\times L^2}^2\big).
\end{align*}
This gives the proof of the proposition.
\end{proof}

\vskip 2cm
\section{Localized virial identities}\label{sec:virialid}
\vskip 0.2cm
The following lemmas are the localized virial identities. One can see \cite{LiuOhtaTod-AIPANL-07} for the details of the proof.

Let $\nu$ is a $H^2$-solution of $\partial_x\nu=u$, and
$$I_1(t)=\int_\mathbb R \nu \partial_t\nu\dx.$$
\begin{lem}\label{lem:I1}
Let $\vec u \in H^1(\R)\times L^2(\R)$ be the solution of the system \eqref{eq:ut}, then
\begin{align*}
I_1'(t)=\|v\|_{L^2}^2    -\|u\|_{L^2}^2      -\|u_x\|_{L^2}^2      +\|u\|_{L^{p+2}}^{p+2}.
\end{align*}
\end{lem}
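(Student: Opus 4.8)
The plan is to reduce everything to the relation $\partial_t\nu=v$ and then to integrate by parts twice. First I would identify the time derivative of $\nu$. Since $\partial_x\nu=u$, differentiating in $t$ and using the first equation of \eqref{eq:ut} gives $\partial_x(\partial_t\nu)=\partial_t u=v_x$, so that $\partial_t\nu-v$ depends on $t$ alone; the integrability (decay in $x$) of $\partial_t\nu$ and $v$ forces this $x$-independent term to vanish, hence $\partial_t\nu=v$. In particular $I_1(t)=\int_\R \nu v\,\dx$.

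Next I would differentiate $I_1$ in time. Justifying differentiation under the integral sign (for smooth, rapidly decaying data first, then passing to general solutions), I get
$$I_1'(t)=\int_\R (\partial_t\nu)\,v\,\dx+\int_\R \nu\,\partial_t v\,\dx=\|v\|_{L^2}^2+\int_\R \nu\,v_t\,\dx,$$
where the first term already produces $\|v\|_{L^2}^2$ because $\partial_t\nu=v$. It then remains to evaluate $\int_\R \nu\,v_t\,\dx$. Using the second equation $v_t=\partial_x(-u_{xx}+u-|u|^pu)$ and integrating by parts to move the $x$-derivative onto $\nu$, the boundary terms vanish by decay and, since $\partial_x\nu=u$, I obtain
$$\int_\R \nu\,v_t\,\dx=-\int_\R u\,(-u_{xx}+u-|u|^pu)\,\dx=\int_\R u\,u_{xx}\,\dx-\|u\|_{L^2}^2+\int_\R |u|^p u^2\,\dx.$$
A further integration by parts gives $\int_\R u\,u_{xx}\,\dx=-\|u_x\|_{L^2}^2$, while $\int_\R |u|^p u^2\,\dx=\|u\|_{L^{p+2}}^{p+2}$. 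Collecting these yields the claimed identity $I_1'(t)=\|v\|_{L^2}^2-\|u\|_{L^2}^2-\|u_x\|_{L^2}^2+\|u\|_{L^{p+2}}^{p+2}$.

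The main obstacle here is analytic rather than algebraic: every manipulation — the vanishing of the $x$-independent term in $\partial_t\nu-v$, differentiation under the integral, and the two integrations by parts with no boundary contributions — must be justified at only $H^1(\R)\times L^2(\R)$ regularity, where $\nu$ is merely an $H^2$ primitive of $u$. The clean route is to establish the identity first for smooth Schwartz-class solutions, where each step is immediate, and then to extend to general solutions using the local well-posedness of \eqref{eq:ut} in $H^1\times L^2$ together with a density and continuity argument; this is the content of the computation referenced in \cite{LiuOhtaTod-AIPANL-07}.
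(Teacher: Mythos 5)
Your proposal is correct: the identity $\partial_t\nu=v$, differentiation of $I_1(t)=\int_\R \nu\,v\,\dx$, and the two integrations by parts (with the smooth-then-density justification at $H^1\times L^2$ regularity) constitute exactly the standard computation, which is what the paper relies on by citing \cite{LiuOhtaTod-AIPANL-07} rather than reproducing it. Since the paper gives no proof of its own, your argument matches the intended one in both substance and structure.
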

Let
$$ I_2(t)=\int_\mathbb R \varphi\big(x-y(t)\big)uv \dx,$$
then we have the following lemma.

\begin{lem}\label{lem:I2}
Let $\varphi\in C^3(\R)$, $\vec u\in H^1(\mathbb R)\times L^2(\mathbb R)$ be the solution of \eqref{eq:ut}, then
\begin{align*}
I_2'(t)=&-\dot y \int_\R \varphi'\big(x-y(t)\big)uv\dx
-\frac 12\int_\R \varphi'\big(x-y(t)\big)\Big(3|u_x|^2+v^2+u^2-\frac{2 (p+1)}{p+2}|u|^{p+2}\Big)\dx\\
&+\frac 12\int_\R \varphi'''\big(x-y(t)\big)u^2\dx.
\end{align*}
\end{lem}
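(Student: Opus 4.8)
The plan is to differentiate $I_2(t)=\int_\R \varphi\big(x-y(t)\big)uv\dx$ directly, substitute the evolution equations \eqref{eq:ut} for $u_t$ and $v_t$, and then reduce the result to the claimed form by repeated integration by parts in $x$. Throughout I abbreviate $\psi=\varphi(x-y(t))$, so that $\partial_t\psi=-\dot y\,\varphi'(x-y(t))$ while its spatial derivatives $\partial_x\psi=\varphi'(x-y(t))$, $\partial_x^2\psi=\varphi''(x-y(t))$, $\partial_x^3\psi=\varphi'''(x-y(t))$ are all available since $\varphi\in C^3$. The product rule gives
\[
I_2'(t)=\int_\R (\partial_t\psi)uv\dx+\int_\R \psi\, u_t\, v\dx+\int_\R \psi\, u\, v_t\dx.
\]
The first integral is immediately $-\dot y\int_\R \varphi'(x-y(t))uv\dx$, the first term of the asserted identity. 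For the second I use $u_t=v_x$ to write $\psi u_t v=\tfrac12\psi\,\partial_x(v^2)$, and one integration by parts produces $-\tfrac12\int_\R \varphi'(x-y(t))v^2\dx$.

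The substantive computation is the third integral. Using $v_t=\big(-u_{xx}+u-|u|^pu\big)_x$ and integrating by parts once to move $\partial_x$ onto $\psi u$ yields
\[
\int_\R \psi\, u\, v_t\dx=-\int_\R \big((\partial_x\psi)\, u+\psi\, u_x\big)\big(-u_{xx}+u-|u|^pu\big)\dx,
\]
which I then expand and treat term by term. The two gradient contributions $\int_\R (\partial_x\psi)\,u\,u_{xx}\dx$ and $\int_\R \psi\, u_x u_{xx}\dx$ are each handled by a further integration by parts; the first, after also passing a derivative through $u\,u_x=\tfrac12\partial_x(u^2)$, generates the third-derivative term $\tfrac12\int_\R \varphi'''(x-y(t))u^2\dx$ together with a contribution $-\int_\R\varphi' u_x^2\dx$, while the second yields $-\tfrac12\int_\R\varphi' u_x^2\dx$. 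The nonlinear pieces are simplified using $|u|^p u^2=|u|^{p+2}$ and $|u|^p u\,u_x=\tfrac1{p+2}\partial_x(|u|^{p+2})$, the latter permitting an integration by parts that collects a single weight $\varphi'$. Gathering everything, the $|u_x|^2$ terms combine to $-\tfrac32\int_\R\varphi' u_x^2\dx$, the $u^2$ terms to $-\tfrac12\int_\R\varphi' u^2\dx$, and the nonlinear terms to $\big(1-\tfrac1{p+2}\big)\int_\R\varphi'|u|^{p+2}\dx=\tfrac{p+1}{p+2}\int_\R\varphi'|u|^{p+2}\dx$; combined with the first two integrals this is precisely the stated formula.

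I expect the main difficulty to be bookkeeping rather than conceptual: the factor $3$ in front of $|u_x|^2$ and the coefficient $\tfrac{p+1}{p+2}$ each emerge from adding two distinct integrations by parts, so the signs and weights must be tracked carefully. The only genuine analytic point is the vanishing of every boundary term at $x=\pm\infty$; since the identity is asserted for an $H^1\times L^2$ solution, one justifies these manipulations first for sufficiently regular solutions (where $u,u_x,v$ together with $\varphi',\varphi'',\varphi'''$ decay adequately) and then passes to the general case by a standard density and continuous-dependence argument, as in \cite{LiuOhtaTod-AIPANL-07}.
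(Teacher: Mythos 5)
Your computation is correct and reproduces the stated identity exactly, including the coefficient $3$ on $|u_x|^2$ (from adding the two separate integrations by parts of $\int\psi_x u u_{xx}$ and $\int\psi u_x u_{xx}$) and the factor $\tfrac{p+1}{p+2}$ on the nonlinear term. The paper itself gives no proof of this lemma---it defers entirely to \cite{LiuOhtaTod-AIPANL-07}---and your direct differentiation-plus-integration-by-parts argument, together with the regularization and density step needed to justify the manipulations for $H^1\times L^2$ solutions, is precisely the standard computation that reference carries out, so your proposal fills in the omitted details rather than diverging from the paper's approach.
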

\vskip 2cm

\section{Proof of the main theorem}\label{sec:mainthm}
\vskip 0.2cm

This section is devoted to prove our main theorem.

\subsection{Virial identities}\label{subsecVI}

Let $\varphi(x)$ be a smooth cutoff function, where
\begin{equation}\label{cutoff}
\varphi(x)=\left\{
\begin{aligned}
x&,\quad |x|\leq  R ,\\
0&,\quad |x|\ge 2R,
\end{aligned}
\right.
\end{equation}
$0\leq\varphi'\leq 1$, $|\varphi'''|\lesssim \frac 1 {R^2}$ for any $x\in \R$. Moreover, we denote
$$I(t)=\Big(\frac{4}{p}-2\Big)I_1(t)+2I_2(t).$$
Then we have the following lemma.
\begin{lem}\label{lem:I't}
Let $R>0$, $y$, $\lambda$, $ \vec\eta=(\xi,\eta)^T$ be the parameters and vector obtained in Proposition \ref{prop:modulation}. Then
\begin{align}\label{I'(t)}
I'(t)=&-2\Big(\frac{4}{p}+1\Big)E(\vec u_0)-\Big(4\lambda\frac{4-p}{p}+2\lambda\Big)Q(\vec u_0)+\Big(2-2\lambda^2\frac{4-p}{p}\Big)\|\phi _\lambda\|_{L^2}^2\nonumber\\
&\hspace{1cm}-2\Big(\dot y-\lambda\Big)Q(\vec u_0)+\Big(2-2\lambda^2\frac{4-p}{p}\Big)\|\xi\|_{L^2}^2+2\frac{4-p}{p}\|\lambda\xi+\eta\|_{L^2}^2+R(\vec u),
\end{align}
where
\begin{align}
R(\vec u)=2\int_\mathbb R\Big[1-\varphi'\big(x-&y(t)\big)\Big]\Big(\dot yuv+\frac{3}{2}u_x^2+\frac{1}{2}u^2+\frac{1}{2}v^2-\frac{p+1}{p+2}|u|^{p+2}\Big)\dx\nonumber\\
         &+\int_\mathbb R \varphi'''\big(x-y(t)\big)u^2 \dx.\label{Ru}
\end{align}
\end{lem}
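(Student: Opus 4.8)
The plan is to differentiate $I(t)=\big(\tfrac4p-2\big)I_1(t)+2I_2(t)$ directly, substitute the formulas of Lemmas \ref{lem:I1} and \ref{lem:I2}, and then split the result into the localized remainder $R(\vec u)$ plus a genuinely global main part. The natural first move is to write $\varphi'=1-(1-\varphi')$ inside the two integrals coming from $I_2'$ and to keep the $\varphi'''$ term aside. Since $\varphi'\equiv1$ on $\{|x|\le R\}$, where the soliton lives, all contributions carrying the factor $1-\varphi'$, together with the $\varphi'''$ term, assemble (with the correct numerical factors) into exactly the quantity $R(\vec u)$ of \eqref{Ru}. What remains is the global expression
\begin{align*}
M=\Big(\tfrac4p-2\Big)\big(\|v\|_{L^2}^2-\|u\|_{L^2}^2-\|u_x\|_{L^2}^2+\|u\|_{L^{p+2}}^{p+2}\big)
-2\dot y\!\int_\R uv\dx-\big(3\|u_x\|_{L^2}^2+\|v\|_{L^2}^2+\|u\|_{L^2}^2-\tfrac{2(p+1)}{p+2}\|u\|_{L^{p+2}}^{p+2}\big),
\end{align*}
so that $I'(t)=M+R(\vec u)$.

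Next I would bring in the conservation laws. By conservation of momentum $\int_\R uv\dx=Q(\vec u)=Q(\vec u_0)$, so the middle term becomes $-2\dot y\,Q(\vec u_0)$. The decisive algebraic step is to subtract the right multiple of the conserved energy: using $2E(\vec u_0)=\|u\|_{L^2}^2+\|u_x\|_{L^2}^2+\|v\|_{L^2}^2-\tfrac{2}{p+2}\|u\|_{L^{p+2}}^{p+2}$, one checks that the factor $-2\big(\tfrac4p+1\big)$ is tuned precisely so that, after extracting $-2\big(\tfrac4p+1\big)E(\vec u_0)$, the coefficients of both $\|u_x\|_{L^2}^2$ and $\|u\|_{L^{p+2}}^{p+2}$ cancel identically (this is exactly where the coefficient $\tfrac4p-2$ in the definition of $I$ is chosen, and it is the heart of the computation). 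What survives is the clean form
\begin{align*}
M=-2\Big(\tfrac4p+1\Big)E(\vec u_0)+2\|u\|_{L^2}^2+2\tfrac{4-p}{p}\|v\|_{L^2}^2-2\dot y\,Q(\vec u_0).
\end{align*}

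Finally I would insert the modulation decomposition \eqref{uv}: after the translation by $y$, which leaves every $L^2$ inner product unchanged, $u=\phi_\lambda+\xi$ and $v=-\lambda\phi_\lambda+\eta$. The orthogonality condition $\langle\vec\eta,\overrightarrow{\Psi_\lambda}\rangle=\langle\xi,\phi_\lambda\rangle=0$ from \eqref{orthlambda} yields $\|u\|_{L^2}^2=\|\phi_\lambda\|_{L^2}^2+\|\xi\|_{L^2}^2$, while expanding $\|v\|_{L^2}^2$ leaves a cross term $\langle\phi_\lambda,\eta\rangle$. To eliminate it I would invoke the momentum identity $Q(\vec u_0)=-\lambda\|\phi_\lambda\|_{L^2}^2+\langle\phi_\lambda,\eta\rangle+\langle\xi,\eta\rangle$ (again from \eqref{uv} and $\langle\xi,\phi_\lambda\rangle=0$), trading $\langle\phi_\lambda,\eta\rangle$ for $Q(\vec u_0)$; the leftover cross term $\langle\xi,\eta\rangle$ then recombines with $\|\xi\|_{L^2}^2$ and $\|\eta\|_{L^2}^2$ into $\big(2-2\lambda^2\tfrac{4-p}{p}\big)\|\xi\|_{L^2}^2+2\tfrac{4-p}{p}\|\lambda\xi+\eta\|_{L^2}^2$, while the $\|\phi_\lambda\|_{L^2}^2$ coefficient collapses to $2-2\lambda^2\tfrac{4-p}{p}$. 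Splitting $\dot y=\lambda+(\dot y-\lambda)$ in the momentum term then peels off $-2(\dot y-\lambda)Q(\vec u_0)$ and leaves the $Q(\vec u_0)$-coefficient $-\big(4\lambda\tfrac{4-p}{p}+2\lambda\big)$, which is precisely \eqref{I'(t)}.

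I expect the only real obstacle to be the bookkeeping: keeping the cancellation of the $\|u_x\|^2$ and $\|u\|_{L^{p+2}}^{p+2}$ terms transparent after extracting the energy, and organizing the three cross terms $\langle\phi_\lambda,\eta\rangle$, $\langle\xi,\eta\rangle$, $\langle\xi,\phi_\lambda\rangle$ so that the momentum identity and the orthogonality relation assemble the perfect square $\|\lambda\xi+\eta\|_{L^2}^2$ and the symmetric coefficient $2-2\lambda^2\tfrac{4-p}{p}$ on $\|\phi_\lambda\|_{L^2}^2$ and $\|\xi\|_{L^2}^2$ with exactly the stated constants.
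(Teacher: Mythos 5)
Your proposal is correct and follows essentially the same route as the paper: split off $R(\vec u)$ via $\varphi'=1-(1-\varphi')$, use conservation of momentum and of energy so that the coefficients of $\|u_x\|_{L^2}^2$ and $\|u\|_{L^{p+2}}^{p+2}$ cancel after extracting $-2\big(\tfrac4p+1\big)E(\vec u_0)$, then insert the modulation decomposition with the orthogonality $\langle\xi,\phi_\lambda\rangle=0$ and split $\dot y=\lambda+(\dot y-\lambda)$. The only cosmetic difference is that the paper forms the perfect square at the level of $(u,v)$, writing $\lambda^2\|u\|_{L^2}^2+\|v\|_{L^2}^2=\|v+\lambda u\|_{L^2}^2-2\lambda Q(\vec u_0)$ and then noting $\|v+\lambda u\|_{L^2}^2=\|\lambda\xi+\eta\|_{L^2}^2$, whereas you expand $\|v\|_{L^2}^2$ in modulated variables and eliminate the cross term $\langle\phi_\lambda,\eta\rangle$ through the momentum identity; these are the same computation in a different order, and your coefficients all check out.
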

\begin{proof}
From Lemma \ref{lem:I2} and the conservation law of momentum, we change the form of $I_2'(t)$ as
\begin{align*}
I_2'(t)=&-\dot y \int_\mathbb R \Big[\varphi'\big(x-y(t)\big)-1+1\Big]uv\dx+\frac{1}{2}\int_\mathbb R \varphi'''\big(x-y(t)\big)u^2\dx\\
&\hspace{1cm}-\frac{1}{2}\int_\mathbb R \Big[\varphi'\big(x-y(t)\big)-1+1\Big]\Big[3|u_x|^2+v^2+u^2-\frac{2(p+1)}{p+2}|u|^{p+2}\Big]\dx\\
=&-\dot yQ(\vec u_0)-\frac{1}{2}\Big[3\|u_x\|_{L^2}^2+\|u\|_{L^2}^2+\|v\|_{L^2}^2-\frac{2(p+1)}{p+2}\|u\|_{L^{p+2}}^{p+2}\Big]+\frac{1}{2}\int_\R \varphi'''\big(x-y(t)\big)u^2 \dx\\
&\hspace{1cm}+\int_\mathbb R \Big[1-\varphi'\big(x-y(t)\big)\Big]\Big(\dot yuv+\frac{3}{2}|u_x|^2+\frac{1}{2}v^2+\frac{1}{2}u^2-\frac{p+1}{p+2}|u|^{p+2}\Big)\dx.
\end{align*}
Then a direct computation gives
\begin{align*}
I'(t)=&\Big(\frac{4}{p}-2\Big)I_1'(t)+2I_2'(t)\\
     =&-\Big(\frac{4}{p}+1\Big)\|u_x\|_{L^2}^2+\Big(\frac{4}{p}-3\Big)\|v\|_{L^2}^2+\Big(-\frac{4}{p}+1\Big)\|u\|_{L^2}^2+\frac{2(p+4)}{p(p+2)}\|u\|_{L^{p+2}}^{p+2}\\
     &\hspace{1cm}+2\int_\mathbb R \Big[1-\varphi'\big(x-y(t)\big)\Big]\Big(\dot yuv+\frac{3}{2}|u_x|^2+\frac{1}{2}v^2+\frac{1}{2}u^2-\frac{p+1}{p+2}|u|^{p+2}\Big)\dx\\
     &\hspace{2cm}+\int_\mathbb R \varphi'''\big(x-y(t)\big)u^2 \dx-2\dot yQ(\vec u_0).
\end{align*}
From the conservation law of energy, we have
$$2E(\vec u_0)=\|u_x\|_{L^2}^2+\|v\|_{L^2}^2+\|u\|_{L^2}^2-\frac{2}{p+2}\|u_x\|_{L^{p+2}}^{p+2}.$$
Then
\begin{align*}
 &-\Big(\frac{4}{p}+1\Big)\|u_x\|_{L^2}^2+\Big(\frac{4}{p}-3\Big)\|v\|_{L^2}^2+\Big(-\frac{4}{p}+1\Big)\|u\|_{L^2}^2+\frac{2(p+4)}{p(p+2)}\|u\|_{L^{p+2}}^{p+2}\\
 =&-2\Big(\frac{4}{p}+1\Big)E(\vec u_0)+\frac{2(4-p)}{p}\Big[\frac{p}{4-p}\|u\|_{L^2}^2+\|v\|_{L^2}^2\Big]\\
 =&-2\Big(\frac{4}{p}+1\Big)E(\vec u_0)+\frac{2(4-p)}{p}\Big[\lambda^2\|u\|_{L^2}^2+\|v\|_{L^2}^2\Big]+2\Big(1-\lambda^2\frac{4-p}{p}\Big)\|u\|_{L^2}^2\\
 =&-2\Big(\frac{4}{p}+1\Big)E(\vec u_0) +\frac{2(4-p)}{p}\|v+\lambda u\|_{L^2}^2-4\lambda\frac{4-p}{p}
Q(\vec u_0)+2\Big(1-\lambda^2\frac{4-p}{p}\Big)\|u\|_{L^2}^2.
\end{align*}
By orthogonality condition (\ref{orthlambda}) and using formula (\ref{uv}), we have  the following two equalities:
\begin{align*}
\|u\|_{L^2}^2&=\| \phi_\lambda\|_{L^2}^2+2\langle \phi_\lambda,\xi \rangle+\|\xi\|_{L^2}^2\\
&=\| \phi_\lambda\|_{L^2}^2+2\langle \overrightarrow{\Psi_\lambda},\vec\eta \rangle+\|\xi\|_{L^2}^2=\|\phi_\lambda\|_{L^2}^2+\|\xi\|_{L^2}^2,\\
\|v+\lambda u\|_{L^2}^2&=\|-\lambda \phi_\lambda+\eta+\lambda \phi_\lambda+\lambda \xi\|_{L^2}^2=\|\lambda \xi+\eta\|_{L^2}^2.
\end{align*}
Hence, using the equalities above, we obtain
\begin{align*}
I'(t)=&-2\Big(\frac{4}{p}+1\Big)E(\vec u_0) +\frac{2(4-p)}{p}\|v+\lambda u\|_{L^2}^2-4\lambda\frac{4-p}{p}
Q(\vec u_0)+2\Big(1-\lambda^2\frac{4-p}{p}\Big)\|u\|_{L^2}^2\\
&\hspace{1cm}+2\int_\mathbb R \Big[1-\varphi'\big(x-y(t)\big)\Big]\Big(\dot yuv+\frac{3}{2}u_x^2+\frac{1}{2}v^2+\frac{1}{2}u^2-\frac{p+1}{p+2}|u|^{p+2}\Big)\dx\\
&\hspace{2cm}+\int_\mathbb R \varphi'''\big(x-y(t)\big)u^2 \dx-2\dot yQ(\vec u_0)\\
=&-2\Big(\frac{4}{p}+1\Big)E(\vec u_0) +\frac{2(4-p)}{p}\|\eta+\lambda \xi\|_{L^2}^2-4\lambda\frac{4-p}{p}
Q(\vec u_0)\\
&\hspace{.5cm}+2\Big(1-\lambda^2\frac{4-p}{p}\Big)\big(\|\phi_\lambda\|_{L^2}^2+\|\xi\|_{L^2}^2\big)\\
&\hspace{1cm}+2\int_\mathbb R \Big[1-\varphi'\big(x-y(t)\big)\Big]\Big(\dot yuv+\frac{3}{2}u_x^2+\frac{1}{2}v^2+\frac{1}{2}u^2-\frac{p+1}{p+2}|u|^{p+2}\Big)\dx\\
&\hspace{2cm}+\int_\mathbb R \varphi'''\big(x-y(t)\big)u^2 \dx-2(\dot y-\lambda+\lambda)Q(\vec u_0)\\
=&-2\Big(\frac{4}{p}+1\Big)E(\vec u_0)-2\lambda\Big(2\frac{4-p}{p}+1\Big)Q(\vec u_0)+2\Big(1-\lambda^2\frac{4-p}{p}\Big)\|\phi _\lambda\|_{L^2}^2\nonumber\\
&\hspace{1cm}-2(\dot y-\lambda)Q(\vec u_0)+2\Big(1-\lambda^2\frac{4-p}{p}\Big)\|\xi\|_{L^2}^2+2\frac{4-p}{p}\|\lambda\xi+\eta\|_{L^2}^2+R(\vec u).
\end{align*}
This proves the lemma.
\end{proof}
Now we consider $R(\vec u)$ in \eqref{Ru}.
\begin{lem}\label{lem:Ru}
Let $R(\vec u)$ be defined in \eqref{Ru}; then
$$R(\vec u)=O(\|\vec \eta\|_{H^1\times L^2}^2+\frac{1}{R}).$$
\end{lem}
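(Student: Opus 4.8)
The plan is to exploit the spatial separation between the support of the weight $1-\varphi'$ and the region where the soliton concentrates. From the choice of $\varphi$ in \eqref{cutoff} we have $\varphi'\equiv 1$ on $\{|x|<R\}$, so $1-\varphi'$ vanishes there and is supported in $\{|x|\ge R\}$; similarly $\varphi'''$ is supported in $\{R\le|x|\le 2R\}$ with $\|\varphi'''\|_{L^\infty}\lesssim R^{-2}$. After the change of variables $z=x-y(t)$ and the modulation decomposition \eqref{uv}, which reads $u(t,\cdot+y)=\phi_\lambda+\xi$ and $v(t,\cdot+y)=-\lambda\phi_\lambda+\eta$, the first integral in \eqref{Ru} becomes an integral over $\{|z|\ge R\}$ of a quadratic-plus-higher-order expression in $\phi_\lambda,\phi_\lambda',\xi,\xi_z$ and $\eta$.

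First I would treat the weighted integral term by term. Each quadratic contribution splits into three types: pure soliton terms such as $\phi_\lambda^2$ and $(\phi_\lambda')^2$; cross terms such as $\phi_\lambda\xi$, $\phi_\lambda'\xi_z$ and $\phi_\lambda\eta$; and pure remainder terms such as $\xi^2$, $\xi_z^2$, $\eta^2$ and $\xi\eta$. The pure soliton contributions are integrated only over $\{|z|\ge R\}$, where the exponential decay of $\phi_\lambda$ and $\phi_\lambda'$ (uniform for $\lambda$ in a neighborhood of $\omega$, since there $1-\lambda^2$ is bounded below) gives a bound of size $e^{-CR}\lesssim R^{-1}$. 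The cross terms are handled by Cauchy--Schwarz and Young's inequality, e.g. $\int_{|z|\ge R}|\phi_\lambda\xi|\lesssim e^{-CR/2}\|\xi\|_{L^2}\lesssim e^{-CR}+\|\xi\|_{L^2}^2$, which is absorbed into $O(R^{-1}+\|\vec\eta\|_{H^1\times L^2}^2)$; the pure remainder terms are bounded directly by $\|\vec\eta\|_{H^1\times L^2}^2$. The prefactor $\dot y$ in the $uv$ term causes no trouble, since Lemma \ref{para} yields $\dot y=\lambda+O(\|\vec\eta\|_{H^1\times L^2})$, so $|\dot y|\lesssim 1$.

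The nonlinear term needs a little more care. Using $|u|^{p+2}\lesssim|\phi_\lambda|^{p+2}+|\xi|^{p+2}$, the soliton part again contributes $O(e^{-CR})$, while $\int_{|z|\ge R}|\xi|^{p+2}\le\|\xi\|_{L^{p+2}}^{p+2}\lesssim\|\xi\|_{H^1}^{p+2}$ by the Sobolev embedding $H^1(\R)\hookrightarrow L^{p+2}(\R)$. Since $\|\xi\|_{H^1}\lesssim\varepsilon$ is small and $p>0$, one has $\|\xi\|_{H^1}^{p+2}\lesssim\|\xi\|_{H^1}^2\lesssim\|\vec\eta\|_{H^1\times L^2}^2$. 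The $\varphi'''$ integral is then immediate: $\big|\int_\R\varphi'''(x-y)u^2\dx\big|\le\|\varphi'''\|_{L^\infty}\|u\|_{L^2}^2\lesssim R^{-2}$, using $\|u\|_{L^2}\le\|\phi_\omega\|_{L^2}+\varepsilon\lesssim 1$ on $U_\varepsilon(\overrightarrow{\Phi_\omega})$. Summing all contributions gives $R(\vec u)=O(\|\vec\eta\|_{H^1\times L^2}^2+R^{-1})$.

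The main point, conceptual rather than computational, is that the entire gain comes from the localization: the weight $1-\varphi'$ pushes every soliton-dominated term into the far region $\{|z|\ge R\}$ where $\phi_\lambda$ is exponentially small, leaving only genuinely quadratic remainder terms at order $\|\vec\eta\|_{H^1\times L^2}^2$. The single place demanding attention is the higher-order term $|u|^{p+2}$, where the power $p+2>2$ must be absorbed using the smallness of $\|\xi\|_{H^1}$ rather than the spatial cutoff.
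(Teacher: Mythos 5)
Your proof is correct and follows essentially the same route as the paper's: exploit that $1-\varphi'$ and $\varphi'''$ are supported in $\{|x-y(t)|\ge R\}$, insert the modulation decomposition \eqref{uv}, use the exponential decay of $\phi_\lambda$ to bound the soliton contributions by $O(1/R)$, and absorb cross and remainder terms into $O(\|\vec\eta\|_{H^1\times L^2}^2)$ via Cauchy--Schwarz/Young, with $|\dot y|\lesssim 1$ from Lemma \ref{para}. The only difference is presentational: you expand the cross terms explicitly and spell out the Sobolev-embedding-plus-smallness argument for the $|u|^{p+2}$ term, which the paper subsumes under ``using similar method'' in \eqref{se}--\eqref{fo}.
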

\begin{proof}
Using the definition of the cutoff function $\varphi$ in \eqref{cutoff}, we have
\begin{align*}
|R(\vec u)|
=&\Bigg|\int_{\{|x-y(t)|>R\}}2\Big[1-\varphi'\big(x-y(t)\big)\Big]\\
&\Big(\dot yuv+\frac{3}{2}|u_x|^2+\frac{1}{2}u^2+\frac{1}{2}v^2-\frac{p+1}{p+2}|u|^{p+2}\Big)\dx
         +\int_\mathbb R \varphi'''\big(x-y(t)\big)u^2 \dx\Bigg|\\
         \lesssim & \int_{\{|x-y(t)|>R\}}\Big(1+|\varphi'\big(x-y(t)\big)|\Big)\Big(|\dot y||u||v|+|u_x|^2+u^2+v^2+|u|^{p+2}\Big)\dx+\frac 1 {R^2}.
\end{align*}
By H\"{o}lder's inequality, $|\varphi'|\leq 1$, and $|\dot y|\lesssim 1$ (from Lemma \ref{para}), we have
\begin{align*}
|R(\vec u)| &\lesssim \int_{\{|x-y(t)|>R\}}\big(|u_x|^2+u^2+v^2+|u|^{p+2}\big)\dx+\frac 1{R^2}\\
 &\lesssim \int_{\{|x|>R\}}\Big[(\partial_x \phi_\lambda+\partial_x\xi)^2+(\phi_\lambda+\xi)^2+(\lambda \phi_\lambda-\eta)^2+|\phi_\lambda+\xi|^{p+2}\Big]\dx+\frac 1{R^2},
\end{align*}
where we have used equality \eqref{change} in the last step. Further, using the property of exponential decay of $\partial_x\phi_\lambda,$ we have
\begin{align*}
\int_{\{|x|>R\}}(\partial_x \phi_\lambda)^2\dx
\leq C\int_{\{|x|>R\}}e^{-C|x|}\dx
\leq\frac{C}{R}.
\end{align*}
Then  Young's inequality gives
\begin{align}
\int_{\{|x|>R\}}(&\partial_x\phi_\lambda+\partial_x\xi)^2\dx\nonumber\\
&\lesssim \int_{\{|x|>R\}}\left[(\partial_x\phi_\lambda)^2+(\partial_x\xi)^2\right]\dx\nonumber\\
&\lesssim \frac 1R+\|\partial_x\xi\|_{L^2}^2.\label{fi}
\end{align}
Using a similar method, we can prove
\begin{align}
\int_{\{|x|>R\}}(\phi_\lambda+\xi)^2\dx    \leq C(\frac 1R+\|\xi\|_{L^2}^2),\label{se}\\
\int_{\{|x|>R\}}(\lambda \phi_\lambda-\eta)^2\dx\leq C(\frac 1R+\|\eta\|_{L^2}^2)\label{th},\\
\int_{\{|x|>R\}}|\phi_\lambda+\xi|^{p+2}\dx\leq C(\frac 1R+\|\xi\|_{H^1}^2)\label{fo}.
\end{align}
Thus, we combine \eqref{fi}-\eqref{fo} to obtain
\begin{align*}
|R(\vec u)| \leq C(\frac 1R+\|\vec\eta\|_{H^1\times L^2}^2).
\end{align*}
This implies that $$\displaystyle R(\vec u)=O(\|\vec \eta\|_{H^1\times L^2}^2+\frac{1}{R}).$$
This proves the lemma.
\end{proof}

\subsection{Structure of $I'(t)$}
Our purpose is to control the difference between $u$ and the modulated solitons and the modulated scaling parameter. Note that the quantities involved in $I'(t)$ are nonconserved; the main issue is to analyse the quantities in detail. In particular,  we structure $I'(t)$ as follows.

Denote
\begin{align}
\rho(\vec {u_0})=&-2\Big(\frac{4}{p}+1\Big)\Big[E(\vec u_0)-E\big(\overrightarrow{\Phi_\omega}\big)\Big]-2\lambda\Big(2\frac{4-p}{p}+1\Big)\Big[Q(\vec u_0)-Q\big(\overrightarrow{\Phi_\omega}\big)\Big]\nonumber\\
&\hspace{1cm}+2\|\phi_\lambda\|_{L^2}^{-2}Q(\vec u_0)\Big[Q(\vec u_0)-Q\big(\overrightarrow{\Phi_\omega}\big)\Big],\label{rho}\\
h(\lambda)=& -2\Big(\frac{4}{p}+1\Big)E\big(\overrightarrow{\Phi_\omega}\big)-2\lambda\Big(2\frac{4-p}{p}+1\Big)Q\big(\overrightarrow{\Phi_\omega}\big)
+2\Big(1-\lambda^2\frac{4-p}{p}\Big)\|\phi _\lambda\|_{L^2}^2 \nonumber\\
&\hspace{1cm}-2\|\phi_\lambda\|_{L^2}^{-2}Q(\vec u_0)\Big[Q\big(\overrightarrow{\Phi_\lambda}\big)-Q\big(\overrightarrow{\Phi_\omega}\big)\Big],\label{h}\\
\widetilde R(\vec u)=&R(\vec u)+2\Big(1-\lambda^2\frac{4-p}{p}\Big)\|\xi\|_{L^2}^2+2\frac{4-p}{p}\|\lambda\xi+\eta\|_{L^2}^2\nonumber\\
&\hspace{1cm}-2Q(\vec u_0)\Bigg\{(\dot y-\lambda)-\frac{1}{\|\phi_\lambda\|_{L^2}^2}\Big[Q\big(\overrightarrow{\Phi_\lambda}\big)-Q\big(\overrightarrow{\Phi_\omega}\big)\Big]\nonumber\\
&\hspace{1cm}+\|\phi_\lambda\|_{L^2}^{-2}\Big[Q(\vec u_0)-Q\big(\overrightarrow{\Phi_\omega}\big)\Big]\Bigg\}.\label{Rtilde}
\end{align}
Now we rewrite $I'(t)$ as follows. In particular, we remark that there are no one-order terms with respect to $\vec \eta$ and $\lambda$.
\begin{lem}\label{lem:I't-rho-h-R}
\begin{align*}
I'(t)=\rho(\vec u_0)+h(\lambda)+\widetilde R(\vec u).
\end{align*}
\end{lem}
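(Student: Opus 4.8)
The plan is to verify the identity by direct algebraic substitution, since the three quantities $\rho(\vec u_0)$, $h(\lambda)$ and $\widetilde R(\vec u)$ have been defined precisely so that their sum reproduces the formula for $I'(t)$ obtained in Lemma \ref{lem:I't}. First I would start from
\begin{align*}
I'(t)=&-2\Big(\tfrac{4}{p}+1\Big)E(\vec u_0)-2\lambda\Big(2\tfrac{4-p}{p}+1\Big)Q(\vec u_0)+2\Big(1-\lambda^2\tfrac{4-p}{p}\Big)\|\phi_\lambda\|_{L^2}^2\\
&-2(\dot y-\lambda)Q(\vec u_0)+2\Big(1-\lambda^2\tfrac{4-p}{p}\Big)\|\xi\|_{L^2}^2+2\tfrac{4-p}{p}\|\lambda\xi+\eta\|_{L^2}^2+R(\vec u),
\end{align*}
and add $\rho(\vec u_0)+h(\lambda)+\widetilde R(\vec u)$ term by term, comparing coefficients with the right-hand side above.

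Second, I would dispose of the two ``conserved-value'' groups. The terms $-2(\tfrac4p+1)E(\overrightarrow{\Phi_\omega})$ and $-2\lambda(2\tfrac{4-p}p+1)Q(\overrightarrow{\Phi_\omega})$ appearing in $h(\lambda)$ cancel exactly against the $+E(\overrightarrow{\Phi_\omega})$ and $+Q(\overrightarrow{\Phi_\omega})$ contributions inside the brackets of $\rho(\vec u_0)$; this telescoping recovers the pure terms $-2(\tfrac4p+1)E(\vec u_0)$ and $-2\lambda(2\tfrac{4-p}p+1)Q(\vec u_0)$ of $I'(t)$. The $2(1-\lambda^2\tfrac{4-p}p)\|\phi_\lambda\|_{L^2}^2$ term is supplied directly by $h(\lambda)$, while the two quadratic terms $2(1-\lambda^2\tfrac{4-p}p)\|\xi\|_{L^2}^2$ and $2\tfrac{4-p}p\|\lambda\xi+\eta\|_{L^2}^2$, together with the remainder $R(\vec u)$, are supplied directly by $\widetilde R(\vec u)$.

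Third---and this is the one step that is not completely immediate---I would track all the terms of the form $\|\phi_\lambda\|_{L^2}^{-2}Q(\vec u_0)[\,\cdots\,]$. There are exactly three sources: the $+[Q(\vec u_0)-Q(\overrightarrow{\Phi_\omega})]$ piece in $\rho$, the $-[Q(\overrightarrow{\Phi_\lambda})-Q(\overrightarrow{\Phi_\omega})]$ piece in $h$, and the three-part brace in $\widetilde R$, whose expansion against $-2Q(\vec u_0)$ contributes $+\|\phi_\lambda\|_{L^2}^{-2}Q(\vec u_0)[Q(\overrightarrow{\Phi_\lambda})-Q(\overrightarrow{\Phi_\omega})]$ and $-\|\phi_\lambda\|_{L^2}^{-2}Q(\vec u_0)[Q(\vec u_0)-Q(\overrightarrow{\Phi_\omega})]$ together with the isolated term $-2Q(\vec u_0)(\dot y-\lambda)$. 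The piece of $\rho$ cancels the $Q(\vec u_0)$-type piece from $\widetilde R$, and the piece of $h$ cancels the $Q(\overrightarrow{\Phi_\lambda})$-type piece from $\widetilde R$, so every $Q(\overrightarrow{\Phi_\lambda})$- and $Q(\vec u_0)^2$-type correction disappears and the only survivor is $-2(\dot y-\lambda)Q(\vec u_0)$, matching $I'(t)$ exactly.

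The proof therefore contains no analytic difficulty: it is a bookkeeping identity, and the only place where care is required is the pairwise cancellation in the third step, which is precisely the cancellation that was engineered when splitting $I'(t)$ into a part $\rho(\vec u_0)$ carrying the conserved initial data, a part $h(\lambda)$ depending only on $\lambda$, and a remainder $\widetilde R(\vec u)$ collecting the quadratic and localization errors. Summing the four matched groups yields $I'(t)=\rho(\vec u_0)+h(\lambda)+\widetilde R(\vec u)$, which completes the proof.
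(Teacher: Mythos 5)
Your proposal is correct and is essentially the paper's own proof: the paper likewise verifies the identity by pure bookkeeping, expanding $I'(t)$ from Lemma \ref{lem:I't} by adding and subtracting $E\big(\overrightarrow{\Phi_\omega}\big)$, $Q\big(\overrightarrow{\Phi_\omega}\big)$ and the $\|\phi_\lambda\|_{L^2}^{-2}Q(\vec u_0)[\,\cdots\,]$ corrections so that the three groups $\rho(\vec u_0)$, $h(\lambda)$, $\widetilde R(\vec u)$ appear, which is exactly your cancellation scheme read in the opposite direction. The only blemish is a dropped factor of $2$ when you expand the brace in $\widetilde R(\vec u)$ against $-2Q(\vec u_0)$ (the contributions are $\pm 2\|\phi_\lambda\|_{L^2}^{-2}Q(\vec u_0)[\,\cdots\,]$), but since the matching terms in $\rho$ and $h$ carry the same factor, the pairwise cancellations you describe go through unchanged.
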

\begin{proof}
We will make a direct calculation. From \eqref{I'(t)}, we know that
\begin{align*}
I'(t)&=-2\Big(\frac{4}{p}+1\Big)E(\vec u_0)-2\lambda\Big(2\frac{4-p}{p}+1\Big)Q(\vec u_0)+2\Big(1-\lambda^2\frac{4-p}{p}\Big)\|\phi _\lambda\|_{L^2}^2\nonumber\\
&\hspace{1cm}-2(\dot y-\lambda)Q(\vec u_0)+2\Big(1-\lambda^2\frac{4-p}{p}\Big)\|\xi\|_{L^2}^2+2\frac{4-p}{p}\|\lambda\xi+\eta\|_{L^2}^2+R(\vec u)\\
&=-2\Big(\frac{4}{p}+1\Big)\Big[E(\vec u_0)-E\big(\overrightarrow{\Phi_\omega}\big)\Big]-2\lambda\Big(2\frac{4-p}{p}+1\Big)\Big[Q(\vec u_0)-Q\big(\overrightarrow{\Phi_\omega}\big)\Big]\\
&\hspace{1cm}+2\|\phi_\lambda\|_{L^2}^{-2}Q(\vec u_0)\Big[Q(\vec u_0)-Q\big(\overrightarrow{\Phi_\omega}\big)\Big]\\
&\hspace{1cm} -2\Big(\frac{4}{p}+1\Big)E\big(\overrightarrow{\Phi_\omega}\big)-2\lambda\Big(2\frac{4-p}{p}+1\Big)Q\big(\overrightarrow{\Phi_\omega}\big)
+2\Big(1-\lambda^2\frac{4-p}{p}\Big)\|\phi _\lambda\|_{L^2}^2 \nonumber\\
&\hspace{1cm}-2\|\phi_\lambda\|_{L^2}^{-2}Q(\vec u_0)\Big[Q\big(\overrightarrow{\Phi_\lambda}\big)-Q\big(\overrightarrow{\Phi_\omega}\big)\Big]\\
&\hspace{1cm}+R(\vec u)+2\Big(1-\lambda^2\frac{4-p}{p}\Big)\|\xi\|_{L^2}^2+2\frac{4-p}{p}\|\lambda\xi+\eta\|_{L^2}^2\\
&\hspace{1cm}-2Q(\vec u_0)\Bigg\{(\dot y-\lambda)-\|\phi_\lambda\|_{L^2}^{-2}\Big[Q\big(\overrightarrow{\Phi_\lambda}\big)-Q\big(\overrightarrow{\Phi_\omega}\big)\Big]
+\|\phi_\lambda\|_{L^2}^{-2}\Big[Q(\vec u_0)-Q\big(\overrightarrow{\Phi_\omega}\big)\Big]\Bigg\}\\
&=\rho(\vec u_0)+h(\lambda)+\widetilde R(\vec u).
\end{align*}
This completes the proof.
\end{proof}
By Lemma \ref{lem:Ru} and Proposition \ref{prop:y-lambda}, we obtain
\begin{align}
\widetilde R(\vec u)=O\big(\|\vec \eta\|_{H^1\times L^2}^2+\frac{1}{R}\big).\label{widetilder R}
\end{align}

\subsection{Positivity of the main parts}
The main parts of $I'(t)$, $\rho(\vec u_0)$, and $h(\lambda)$ are considered in this subsection. We shall prove their positivity in the following.
\begin{lem}\label{lem:finalestimate}
Let $\vec u_0=(1+a)\overrightarrow{\Phi_\omega}$ for some small positive constant $a$. Then
\begin{align*}
&1)\qquad \rho(\vec u_0)\geq C_1a,\quad \text{for some}\quad C_1>0;\\
&2)\qquad h(\lambda)\geq C_2(\lambda-\omega)^2+O(a(\lambda-\omega)^2)+o((\lambda-\omega)^2)\quad \text{for some}\quad C_2>0.
\end{align*}
\end{lem}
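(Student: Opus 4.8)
The plan is to substitute the explicit initial datum $\vec u_0=(1+a)\overrightarrow{\Phi_\omega}$ directly into the definitions \eqref{rho} and \eqref{h}, so that every ingredient becomes an explicit function of $a$ and of $\lambda$, and then to isolate the leading behaviour. Writing $A=\|\phi_\omega\|_{L^2}^2$, from $\overrightarrow{\Phi_\omega}=(\phi_\omega,-\omega\phi_\omega)^T$ and \eqref{Qphi} one gets $Q(\vec u_0)=-(1+a)^2\omega A$, hence $Q(\vec u_0)-Q(\overrightarrow{\Phi_\omega})=-\omega A(2a+a^2)$, while a one-line computation expresses $E(\vec u_0)-E(\overrightarrow{\Phi_\omega})$ through $\|\phi_\omega\|_{L^2}^2$, $\|\partial_x\phi_\omega\|_{L^2}^2$ and $\|\phi_\omega\|_{L^{p+2}}^{p+2}$. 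The two Pohozaev-type identities obtained by testing \eqref{elliptic} against $\phi_\omega$ and against $x\partial_x\phi_\omega$ reduce all of these to multiples of $A$; in particular $\|\partial_x\phi_\omega\|_{L^2}^2-\|\phi_\omega\|_{L^{p+2}}^{p+2}=-(1-\omega^2)A$, which is the relation driving the cancellations below. Throughout I will use the critical condition $\omega^2=\tfrac p4$.

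For part 1) I would expand $\rho(\vec u_0)$ to first order in $a$, treating the $\lambda$-dependence of the coefficients as $O(|\lambda-\omega|)$ corrections. The identity above gives $E(\vec u_0)-E(\overrightarrow{\Phi_\omega})=2\omega^2A\,a+O(a^2)$, and the three terms of \eqref{rho} are each $O(a)$. Using $\omega^2=\tfrac p4$, the coefficients $-2(\tfrac4p+1)$, $-2\lambda\tfrac{8-p}{p}$ and $2\|\phi_\lambda\|_{L^2}^{-2}Q(\vec u_0)$ contribute respectively $-(4+p)A$, $(8-p)A$ and $pA$ times $a$, which sum to $(4-p)A\,a$ — strictly positive since $0<p<4$. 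As the $\lambda$-dependence only produces terms of size $O(|\lambda-\omega|\,a)$ and the higher-order-in-$a$ contributions are $O(a^2)$, for $a$ and $|\lambda-\omega|$ small (the latter being $\lesssim\varepsilon$ by \eqref{moreover}) I conclude $\rho(\vec u_0)=(4-p)A\,a+O\big((a+|\lambda-\omega|)a\big)\ge C_1 a$ with $C_1=\tfrac12(4-p)A>0$.

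For part 2) the plan is a second-order Taylor expansion of $h(\lambda)$ about $\lambda=\omega$, with $Q(\vec u_0)$ a fixed parameter and $\lambda\mapsto h(\lambda)$ smooth by the rescaling \eqref{rescaling}. I would verify the three coefficients in turn. First, $h(\omega)=0$: the last term of \eqref{h} vanishes, and inserting $E(\overrightarrow{\Phi_\omega})=\tfrac{2pA}{p+4}$ (from the Pohozaev identities) and $Q(\overrightarrow{\Phi_\omega})=-\omega A$ into the remaining three terms yields exact cancellation once $\omega^2=\tfrac p4$ is used. Second, $h'(\omega)=0$: upon differentiating, the last term contributes nothing because both $Q(\overrightarrow{\Phi_\lambda})-Q(\overrightarrow{\Phi_\omega})$ and $\partial_\lambda Q(\overrightarrow{\Phi_\lambda})$ vanish at $\lambda=\omega$, the latter being exactly Lemma \ref{lem:partialQ}, while the surviving terms cancel using $g'(\omega)=-A/\omega$, where $g(\lambda)=\|\phi_\lambda\|_{L^2}^2$ and the value $g'(\omega)$ itself follows from Lemma \ref{lem:partialQ} via $Q(\overrightarrow{\Phi_\lambda})=-\lambda g(\lambda)$.

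The crux is $h''(\omega)>0$, and this is where the main obstacle lies. Setting $q(\lambda)=Q(\overrightarrow{\Phi_\lambda})$, the needed inputs are the curvatures $q''(\omega)$ and $g''(\omega)$. From the explicit $q'(\lambda)=-(1-\lambda^2)^{2/p-3/2}\big(1-\tfrac4p\lambda^2\big)\|\phi_0\|_{L^2}^2$, one more differentiation combined with $1-\tfrac4p\omega^2=0$ kills the term carrying the derivative of the prefactor and leaves $q''(\omega)=\tfrac{8A}{\omega(4-p)}\neq0$; then $q=-\lambda g$ gives $g''(\omega)=-\tfrac{8A}{4-p}$. At $\lambda=\omega$ the last term of \eqref{h} contributes to $h''(\omega)$ only through $q''(\omega)$, while the term $2\big(1-\lambda^2\tfrac{4-p}{p}\big)g(\lambda)$ contributes through $g(\omega),g'(\omega),g''(\omega)$; carrying out the arithmetic with $\omega^2=\tfrac p4$ gives, at $a=0$, $h''(\omega)=\tfrac{16A}{p}>0$. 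Since the only $a$-dependence enters through the factor $Q(\vec u_0)=-(1+a)^2\omega A$ in the last term, $h''(\omega)=\tfrac{16A}{p}+O(a)$, so the expansion reads $h(\lambda)=\tfrac{8A}{p}(\lambda-\omega)^2+O\big(a(\lambda-\omega)^2\big)+o\big((\lambda-\omega)^2\big)$, which is the claim with $C_2=\tfrac{8A}{p}$. The difficulty is precisely that $\partial_\lambda Q(\overrightarrow{\Phi_\lambda})|_\omega=0$ (the degenerate case), so positivity of $h''(\omega)$ cannot come from the usual Vakhitov–Kolokolov quantity; it must be extracted entirely from the nonvanishing second-order curvatures $q''(\omega),g''(\omega)$ weighted by $Q(\vec u_0)$, and confirming that these do not cancel forces one to track exact constants rather than mere orders of magnitude.
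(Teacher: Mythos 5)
Your proposal is correct and takes essentially the same approach as the paper: part 1) is the same first-order-in-$a$ expansion of $\rho$ (your coefficients $-(4+p)A$, $(8-p)A$, $pA$ summing to $(4-p)A$ match the paper's $4a\omega^2\tfrac{4-p}{p}\|\phi_\omega\|_{L^2}^2$), and part 2) is the same second-order Taylor expansion at $\lambda=\omega$ with $h(\omega)=h'(\omega)=0$ and $h''(\omega)=\tfrac{16}{p}\|\phi_\omega\|_{L^2}^2+O(a)>0$, resting on Lemma \ref{lem:partialQ} and $\partial_\lambda\|\phi_\lambda\|_{L^2}^2\big|_{\lambda=\omega}=-\|\phi_\omega\|_{L^2}^2/\omega$. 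The only real divergence is in the curvature step: you compute $\partial_\lambda^2 Q\big(\overrightarrow{\Phi_\lambda}\big)\big|_{\lambda=\omega}$ and $\partial_\lambda^2\|\phi_\lambda\|_{L^2}^2\big|_{\lambda=\omega}$ explicitly from the rescaling \eqref{rescaling}, whereas the paper first freezes the $a$-dependence into $h_1$ and groups terms so that the coefficient of $\partial_\lambda^2\|\phi_\lambda\|_{L^2}^2$ vanishes at $\lambda=\omega$ (since $1-\tfrac4p\omega^2=0$), never needing those second derivatives — both computations land on the same constants $C_1$ and $C_2=\tfrac{8}{p}\|\phi_\omega\|_{L^2}^2$.
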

\begin{proof}
1) Recall the definition of $\rho(\vec u_0)$ in \eqref{rho}:
\begin{align}
\rho(\vec u_0)=&-2\Big(\frac{4}{p}+1\Big)\Big[E(\vec u_0)-E\big(\overrightarrow{\Phi_\omega}\big)\Big]-2\lambda\Big(2\frac{4-p}{p}+1\Big)\Big[Q(\vec u_0)-Q\big(\overrightarrow{\Phi_\omega}\big)\Big]\nonumber\\
&\hspace{1cm}+2\|\phi_\lambda\|_{L^2}^{-2}Q(\vec u_0)\Big[Q(\vec u_0)-Q\big(\overrightarrow{\Phi_\omega}\big)\Big].\label{rhouse}
\end{align}
First, by Taylor's type expansion, we have
\begin{align*}
E(\vec u_0)-E\big(\overrightarrow{\Phi_\omega}\big)&=\Big\langle E'\big(\overrightarrow{\Phi_\omega}\big), \vec u_0-\overrightarrow{\Phi_\omega}\Big\rangle+O\big(\|\vec u_0-\overrightarrow{\Phi_\omega}\|_{H^1\times L^2}^2\big)\\
&=a\Big\langle E'\big(\overrightarrow{\Phi_\omega}\big), \overrightarrow{\Phi_\omega}\Big\rangle+O(a^2).
\end{align*}
Using the expression of $E'\big(\overrightarrow{\Phi_\omega}\big)$ in (\ref{eq:E'u}), we have
\begin{align}
E(\vec u_0)-E\big(\overrightarrow{\Phi_\omega}\big)&=a\int_\mathbb R (-\partial_{xx}\phi_\omega+\phi_\omega-\phi_\omega^{p+1},-\omega \phi_\omega)\cdot
\left(\begin{array}{c}
\phi_\omega\\
-\omega \phi_\omega
\end{array}\right)\dx+O(a^2)\nonumber\\
&=a\int_\mathbb R
(-\partial_{xx}\phi_\omega+(1-\omega^2)\phi_\omega-\phi_\omega^{p+1}+\omega^2\phi_\omega,-\omega \phi_\omega)\cdot
\left(\begin{array}{c}
\phi_\omega\nonumber\\
-\omega \phi_\omega
\end{array}\right)\dx+O(a^2)\nonumber\\
&=2a\omega^2\|\phi_\omega\|_{L^2}^2+O(a^2),\label{E-E}
\end{align}
where we have used equation \eqref{elliptic} in the last step. Next, we compute the term $Q(\vec u_0)-Q\big(\overrightarrow{\Phi_\omega}\big)$ in \eqref{rhouse}:
\begin{align*}
Q(\vec u_0)-Q\big(\overrightarrow{\Phi_\omega}\big)&=\Big\langle Q'\big(\overrightarrow{\Phi_\omega}\big), \vec u_0-\overrightarrow{\Phi_\omega}\Big\rangle+O\big(\|\vec u_0-\overrightarrow{\Phi_\omega}\|_{H^1\times L^2}^2\big)\\
&=a\Big\langle Q'\big(\overrightarrow{\Phi_\omega}\big), \overrightarrow{\Phi_\omega}\Big\rangle+O(a^2).
\end{align*}
Using the expression of $Q'\big(\overrightarrow{\Phi_\omega}\big)$ in \eqref{eq:Q'u}, we have
\begin{align}\label{Q-Q}
Q(\vec u_0)-Q\big(\overrightarrow{\Phi_\omega}\big)&=a\int_\mathbb R
(-\omega \phi_\omega,
\phi_\omega)\cdot\left(\begin{array}{c}
\phi_\omega\\
-\omega \phi_\omega
\end{array}\right)\dx+O(a^2)\nonumber\\
&=-2a\omega\|\phi_\omega\|_{L^2}^2+O(a^2).
\end{align}
Then we put \eqref{E-E} and \eqref{Q-Q} into the expression of $\rho(\vec u_0$):
\begin{align}\label{rho-u0}
\rho(\vec u_0)=&-2\Big(\frac{4}{p}+1\Big)\Big[E(\vec u_0)-E\big(\overrightarrow{\Phi_\omega}\big)\Big]-2\lambda\Big(2\frac{4-p}{p}+1\Big)\Big[Q(\vec u_0)-Q\big(\overrightarrow{\Phi_\omega}\big)\Big]\nonumber\\
&\hspace{1cm}+2\|\phi_\lambda\|_{L^2}^{-2}Q(\vec u_0)\Big[Q(\vec u_0)-Q\big(\overrightarrow{\Phi_\omega}\big)\Big]\nonumber\\
=&-2\Big(\frac{4}{p}+1\Big)\Big[2a\omega^2\|\phi_\omega\|_{L^2}^2+O(a^2)\Big]-2\lambda\Big(2\frac{4-p}{p}+1\Big)\Big[-2a\omega\|\phi_\omega\|_{L^2}^2+O(a^2)\Big]\nonumber\\
&\hspace{1cm}+2\|\phi_\lambda\|_{L^2}^{-2}Q(\vec u_0)\Big[-2a\omega\|\phi_\omega\|_{L^2}^2+O(a^2)\Big]\nonumber\\
=&-4a\omega^2\Big(\frac{4}{p}+1\Big)\|\phi_\omega\|_{L^2}^2+4 a\omega\lambda  \Big(2\frac{4-p}{p}+1\Big)\|\phi_\omega\|_{L^2}^2-4a\omega Q(\vec u_0)\frac{\|\phi_\omega\|_{L^2}^2}{\|\phi_\lambda\|_{L^2}^2}\nonumber\\
&\hspace{1cm}+O(a^2).
\end{align}
For the term $4a\omega\lambda \Big(2\frac{4-p}{p}+1\Big)\|\phi_\omega\|_{L^2}^2$, we have
\begin{align}\label{use}
4a\omega\lambda\Big(2\frac{4-p}{p}+1\Big)\|\phi_\omega\|_{L^2}^2
=4a\omega^2\Big(2\frac{4-p}{p}+1\Big)\|\phi_\omega\|_{L^2}^2+O(a|\lambda-\omega|).
\end{align}
For the term $\displaystyle -4a\omega Q(\vec u_0)\frac{\|\phi_\omega\|_{L^2}^2}{\|\phi_\lambda\|_{L^2}^2},$ we use the expression $\phi_\omega(x)=(1-\omega^2)^{\frac{1}{p}}\phi_0\left(\sqrt{1-\omega^2}x\right)$ in \eqref{rescaling} and Taylor's type expansion again to calculate
\begin{align*}
-4a\omega Q(\vec u_0)\frac{\|\phi_\omega\|_{L^2}^2}{\|\phi_\lambda\|_{L^2}^2}&=-4a\omega Q(\vec u_0)\frac
{(1-\omega^2)^{\frac 2p-\frac12}\|\phi_0\|_{L^2}^2}{(1-\lambda^2)^{\frac 2p-\frac12}\|\phi_0\|_{L^2}^2}=-4a\omega Q(\vec u_0)\frac
{(1-\omega^2)^{\frac 2p-\frac12}}{(1-\lambda^2)^{\frac 2p-\frac12}}\\
&=-4a\omega Q(\vec u_0)(1-\omega^2)^{\frac 2p-\frac12}\Big[(1-\omega^2)^{\frac 12-\frac 2p}+O(|\lambda-\omega|)\Big]\\
&=-4a\omega Q(\vec u_0)+Q(\vec u_0)O(a|\lambda-\omega|).
\end{align*}
From the definition of $Q(\vec u)$ in \eqref{Momentum}, we have
\begin{align*}
Q(\vec u_0)=Q\Big((1+a)\overrightarrow{\Phi_\omega}\Big)=-\omega(1+a)^2 \|\phi_\omega\|_{L^2}^2.
\end{align*}
Combining the last two estimates, we obtain
\begin{align}
-4a\omega Q(\vec u_0)\frac{\|\phi_\omega\|_{L^2}^2}{\|\phi_\lambda\|_{L^2}^2}=4a\omega^2\|\phi_\omega\|_{L^2}^2+O(a^2)+O(a|\lambda-\omega|).\label{use2}
\end{align}
Finally we put \eqref{use} and \eqref{use2} into \eqref{rho-u0} to obtain
\begin{align*}
\rho(\vec u_0)=&-4a\omega^2\Big(\frac{4}{p}+1\Big)\|\phi_\omega\|_{L^2}^2+4a\omega^2\frac{8-p}{p}
\|\phi_\omega\|_{L^2}^2\\
&\hspace{1cm}+4a\omega^2\|\phi_\omega\|_{L^2}^2+O(a|\lambda-\omega|)+O(a^2)\\
=& 4a\omega^2 \frac{4-p}{p}\|\phi_\omega\|_{L^2}^2+O(a|\lambda-\omega|)+O(a^2).
\end{align*}
Choosing $a$ and $\varepsilon_0$ small enough, where $\varepsilon_0$ is the constant in Proposition \ref{prop:modulation}, and by \eqref{moreover}, we obtain  conclusion 1) of this lemma.

2)
Recall the definition of $h(\lambda)$ from \eqref{h}:
\begin{align}
h(\lambda)= -2\Big(\frac{4}{p}+&1\Big)E\big(\overrightarrow{\Phi_\omega}\big)-2\lambda\Big(2\frac{4-p}{p}+1\Big)Q\big(\overrightarrow{\Phi_\omega}\big)
+2\Big(1-\lambda^2\frac{4-p}{p}\Big)\|\phi _\lambda\|_{L^2}^2 \nonumber\\
&-2\|\phi_\lambda\|_{L^2}^{-2}Q(\vec u_0)\Big[Q\big(\overrightarrow{\Phi_\lambda}\big)-Q\big(\overrightarrow{\Phi_\omega}\big)\Big].\label{hlambdause}
\end{align}
First, we consider the last term and claim that
\begin{align}
-2\|\phi_\lambda&\|_{L^2}^{-2}Q(\vec u_0)\Big[Q\big(\overrightarrow{\Phi_\lambda}\big)-Q\big(\overrightarrow{\Phi_\omega}\big)\Big]\nonumber\\
&=2\omega
\Big[Q\big(\overrightarrow{\Phi_\lambda}\big)-Q\big(\overrightarrow{\Phi_\omega}\big)\Big]
+o\big((\lambda-\omega)^2\big)+O\big(a(\lambda-\omega)^2\big).\label{use3}
\end{align}
To prove \eqref{use3}, we need the following equalities, which can be obtained by Taylor's type expansion and Lemma \ref{lem:partialQ}:
\begin{align}
Q\big(\overrightarrow{\Phi_\lambda}\big)-Q\big(\overrightarrow{\Phi_\omega}\big)&=\partial_\lambda Q\big(\overrightarrow{\Phi_\lambda}\big)\Bigg|_{\lambda=\omega}(\lambda-\omega)+O((\lambda-\omega)^2)\nonumber\\
&=O((\lambda-\omega)^2),\label{Q1}\\
Q(\vec u_0)-Q\big(\overrightarrow{\Phi_\omega}\big)&=O(a),\label{Q2}\\
\|\phi_\lambda\|_{L^2}^{-2}-\|\phi_\omega\|_{L^2}^{-2}&=O(|\lambda-\omega|)\label{Q3}.
\end{align}
Using \eqref{Q1}--\eqref{Q3}, we obtain
\begin{align*}
-2\|\phi_\lambda\|_{L^2}^{-2}Q(\vec u_0)&\Big[Q\big(\overrightarrow{\Phi_\lambda}\big)-Q\big(\overrightarrow{\Phi_\omega}\big)\Big]\\
=&-2\|\phi_\omega\|_{L^2}^{-2}Q\big(\overrightarrow{\Phi_\omega}\big)
\Big[Q\big(\overrightarrow{\Phi_\lambda}\big)-Q\big(\overrightarrow{\Phi_\omega}\big)\Big]+o((\lambda-\omega)^2)+O\big(a(\lambda-\omega)^2\big).
\end{align*}
Further, from \eqref{Qphi}, we get
\begin{align*}
-2\|\phi_\omega&\|_{L^2}^{-2}Q\big(\overrightarrow{\Phi_\omega}\big)
\Big[Q\big(\overrightarrow{\Phi_\lambda}\big)-Q\big(\overrightarrow{\Phi_\omega}\big)\Big]\\
&=-2\|\phi_\omega\|_{L^2}^{-2}\cdot(-\omega\|\phi_\omega\|_{L^2}^2)\cdot \Big[Q\big(\overrightarrow{\Phi_\lambda}\big)-Q\big(\overrightarrow{\Phi_\omega}\big)\Big]\\
&=2\omega\Big[Q\big(\overrightarrow{\Phi_\lambda}\big)-Q\big(\overrightarrow{\Phi_\omega}\big)\Big].
\end{align*}
Thus, we obtain
\begin{align*}
-2\|\phi_\lambda&\|_{L^2}^{-2}Q(\vec u_0)\Big[Q\big(\overrightarrow{\Phi_\lambda}\big)-Q\big(\overrightarrow{\Phi_\omega}\big)\Big]\\
&=2\omega
\Big[Q\big(\overrightarrow{\Phi_\lambda}\big)-Q\big(\overrightarrow{\Phi_\omega}\big)\Big]
+o\big((\lambda-\omega)^2\big)+O\big(a(\lambda-\omega)^2\big).
\end{align*}
This proves \eqref{use3}.

Inserting \eqref{use3} into \eqref{hlambdause}, we get
\begin{align*}
h(\lambda)=-2&\Big(\frac{4}{p}+1\Big)E\big(\overrightarrow{\Phi_\omega}\big)-2\lambda\Big(2\frac{4-p}{p}+1\Big)Q\big(\overrightarrow{\Phi_\omega}\big)
+2\Big(1-\lambda^2\frac{4-p}{p}\Big)\|\phi _\lambda\|_{L^2}^2 \\
&+2\omega
\Big[Q\big(\overrightarrow{\Phi_\lambda}\big)-Q\big(\overrightarrow{\Phi_\omega}\big)\Big]
+o\big((\lambda-\omega)^2\big)+O\big(a(\lambda-\omega)^2\big).
\end{align*}
Let
\begin{align}\label{h1}
h_1(\lambda)=& -2\Big(\frac{4}{p}+1\Big)E\big(\overrightarrow{\Phi_\omega}\big)-2\lambda\Big(2\frac{4-p}{p}+1 \Big)Q\big(\overrightarrow{\Phi_\omega}\big)\nonumber\\
&\hspace{1cm}+2\Big(1-\lambda^2\frac{4-p}{p}\Big)\|\phi _\lambda\|_{L^2}^2
+2\omega
\Big[Q\big(\overrightarrow{\Phi_\lambda}\big)-Q\big(\overrightarrow{\Phi_\omega}\big)\Big].
\end{align}
Then
\begin{align}
h(\lambda)=h_1(\lambda)+o\big((\lambda-\omega)^2\big)+O\big(a(\lambda-\omega)^2\big).\label{hh1}
\end{align}
Now we claim that
\begin{align}
h_1(\omega)=0,\quad h_1'(\omega)=0,\quad h_1''(\omega)>0.
\label{13.22}
\end{align}
We prove the claim by the following three steps.

\emph{Step 1. $h_1(\omega)=0$.}

By the definition of $h_1(\lambda),$ we have
\begin{align*}
h_1(\omega)=& -2\Big(\frac{4}{p}+1\Big)E\big(\overrightarrow{\Phi_\omega}\big)-2\omega\Big(2\frac{4-p}{p}+1 \Big)Q\big(\overrightarrow{\Phi_\omega}\big)
+2\Big(1-\omega^2\frac{4-p}{p}\Big)\|\phi _\omega\|_{L^2}^2.
\end{align*}
By \eqref{Qphi} and $E\big(\overrightarrow{\Phi_\omega}\big)$ in \eqref{Energy}, we have
\begin{align*}
h_1(\omega)
&=-2\Big(\frac{4}{p}+1\Big)\Big(\frac 12\int_\mathbb R \big(|\partial_x\phi_\omega|^2+|\phi_\omega|^2+|-\omega\phi_\omega|^2\big)\dx-\frac{1}{p+2}\int_\mathbb R|\phi_\omega|^{p+2}\dx\Big)\\
&\hspace{1cm}-2\omega\Big(2\frac{4-p}{p}+1 \Big)\Big( \int_\mathbb R-\omega\phi_\omega^2 \dx \Big)
+2\Big(1-\omega^2\frac{4-p}{p}\Big)\|\phi _\omega\|_{L^2}^2\\
&=-\frac 8p \omega^2 \|\phi_\omega\|_{L^2}^2+2\|\phi_\omega\|_{L^2}^2=0,
\end{align*}
where we have used $\omega^2=\frac p4$ in the above computation. Therefore, we have $h_1(\omega)=0.$

\noindent\emph{Step 2. $h_1'(\omega)=0$.}

Using the expression of $h_1(\lambda)$ in \eqref{h1}, we have
\begin{align}
h_1'(\lambda)=& -2\Big(2\frac{4-p}{p}+1 \Big)Q\big(\overrightarrow{\Phi_\omega}\big)
-4\lambda\frac{4-p}{p}\|\phi _\lambda\|_{L^2}^2\nonumber\\
&\hspace{1cm}+2\Big(1-\lambda^2\frac{4-p}{p}\Big)\partial_\lambda(\|\phi _\lambda\|_{L^2}^2)
+2\omega \partial_\lambda Q\big(\overrightarrow{\Phi_\lambda}\big).\label{h1' lambda}
\end{align}
By \eqref{Qphi} and Lemma \ref{lem:partialQ}, we have
\begin{align}
h_1'(\omega)&= -2\Big(2\frac{4-p}{p}+1 \Big)Q\big(\overrightarrow{\Phi_\omega}\big)
+4\frac{4-p}{p}Q\big(\overrightarrow{\Phi_\omega}\big)+2\Big(1-\omega^2\frac{4-p}{p}\Big)\partial_\lambda(\|\phi _\lambda\|_{L^2}^2)\Big|_{\lambda=\omega}\nonumber\\
&=-2Q\big(\overrightarrow{\Phi_\omega}\big)+2\Big(1-\omega^2\frac{4-p}{p}\Big)\partial_\lambda(\|\phi _\lambda\|_{L^2}^2)\Big|_{\lambda=\omega}.\label{h1'}
\end{align}
Now we compute the term $\displaystyle\partial_\lambda(\|\phi _\lambda\|_{L^2}^2)\Big|_{\lambda=\omega}$. Note that
\begin{align*}
\partial_\lambda Q\big(\overrightarrow{\Phi_\lambda}\big)=\partial_\lambda(-\lambda \|\phi_\lambda\|_{L^2}^2)
=-\|\phi_\lambda\|_{L^2}^2-\lambda \partial_\lambda(\|\phi _\lambda\|_{L^2}^2);
\end{align*}
then Lemma \ref{lem:partialQ} gives
\begin{align}
\partial_\lambda(\|\phi _\lambda\|_{L^2}^2)\Big|_{\lambda=\omega}=-\frac 1\omega\|\phi_\omega\|_{L^2}^2.\label{use4}
\end{align}
Taking \eqref{use4} into \eqref{h1'}, we get
\begin{align*}
h_1'(\omega)&=2\omega\|\phi_\omega\|_{L^2}^2+2\Big(1-\omega^2\frac{4-p}{p}\Big)\Big(-\frac 1\omega\|\phi_\omega\|_{L^2}^2\Big)\\
&=\frac 2 {\omega}\Big(\omega^2-1+\omega^2\frac{4-p}{p}\Big)\|\phi_\omega\|_{L^2}^2\\
&=\frac 2 \omega\Big(  \frac 4p \omega^2-1   \Big)\|\phi_\omega\|_{L^2}^2=0.
\end{align*}
Thus, we prove the result $h_1'(\omega)=0.$

\noindent\emph{Step 3. $h_1''(\omega)>0$.}

Taking the derivative of \eqref{h1' lambda} with respect to $\lambda$, we have
\begin{align*}
h_1''(\lambda)=&-4\frac{4-p}{p}\|\phi _\omega\|_{L^2}^2-8\lambda\frac{4-p}{p}\partial_\lambda\big(\|\phi _\lambda\|_{L^2}^2\big)\\
&\hspace{1cm}+2\Big(1-\frac{4-p}{p}\lambda^2\Big)\partial_\lambda^2\big(\|\phi _\lambda\|_{L^2}^2\big)
+2\omega\partial_\lambda^2Q\big(\overrightarrow{\Phi_\lambda}\big).
\end{align*}
Since
\begin{align*}
\partial_\lambda^2 Q\big(\overrightarrow{\Phi_\lambda}\big)&=-\partial_\lambda^2\big(\lambda \|\phi _\lambda\|_{L^2}^2\big)\\
&=-2\partial_\lambda\big(\|\phi _\lambda\|_{L^2}^2\big)-\lambda\partial_\lambda^2\big(\|\phi _\lambda\|_{L^2}^2\big),
\end{align*}
we have
\begin{align*}
h_1''(\lambda)=&-4\frac{4-p}{p}\|\phi _\omega\|_{L^2}^2-4\Big(2\frac{4-p}{p}\lambda+\omega\Big)\partial_\lambda\big(\|\phi _\lambda\|_{L^2}^2\big)\\
&\hspace{1cm}+2\Big(1-\frac{4-p}{p}\lambda^2-\lambda\omega\Big)\partial_\lambda^2\big(\|\phi _\lambda\|_{L^2}^2\big).
\end{align*}
Hence,
\begin{align*}
h_1''(\omega)=&-4\frac{4-p}{p}\|\phi _\omega\|_{L^2}^2  -4\omega\frac{8-p}{p}\partial_\lambda\big(\|\phi _\lambda\|_{L^2}^2\big)\Big|_{\lambda=\omega}
+2\Big(1-\frac{4}{p}\omega^2\Big)\partial_\lambda^2\|\phi _\lambda\|_{L^2}^2\Big|_{\lambda=\omega}.
\end{align*}
Using \eqref{use4} and $\omega^2=\frac p4$, we have
\begin{align*}
h_1''(\omega)=&-4\frac{4-p}{p}\|\phi _\omega\|_{L^2}^2+4\frac{8-p}{p}\|\phi _\omega\|_{L^2}^2=\frac {16} p\|\phi _\omega\|_{L^2}^2>0.
\end{align*}
Thus, we prove the result $h_1''(\omega)>0$. This proves the claim \eqref{13.22}.

Using \eqref{13.22} and Taylor's type extension, we get
\begin{align*}
h_1(\lambda)&=h_1(\omega)+h_1'(\omega)(\lambda-\omega)+\frac 12 h_1''(\omega)(\lambda-\omega)^2+o\big((\lambda-\omega)^2\big)\\
&\ge C_2(\lambda-\omega)^2+o(\lambda-\omega)^2,
\end{align*}
where $C_2= \frac 12 h_1''(\omega)>0$. Putting this into \eqref{hh1}, we obtain the conclusion 2) of this lemma.
\end{proof}
Hence, combining  Lemmas \ref{lem:I't-rho-h-R} and \ref{lem:finalestimate}, and \eqref{widetilder R},  we have
\begin{align}
I'(t)\geq C_1a+C_2(\lambda-\omega)^2+O\Big(\|\vec\eta\|_{{H^1}\times L^2}^2+a(\lambda-\omega)^2+\frac{1}{R}\Big).\label{estimateI't}
\end{align}
\subsection{Upper control of $\|\vec \eta\|_{H^1\times L^2}$}
From \eqref{estimateI't}, to prove the monotonicity of $I'(t)$, we only need to estimate $\|\vec \eta\|_{H^1\times L^2}$. In this subsection, we give the following estimate on $\|\vec \eta\|_{H^1\times L^2}$.
\begin{lem}\label{lem:uppercontrol}
 Let $\vec \eta$ be defined in (\ref{change}); then
\begin{align*}
\|\vec \eta\|_{H^1\times L^2}^2&
\lesssim O(a|\lambda-\omega|+a^2)+o\big((\lambda-\omega)^2\big).
\end{align*}
\end{lem}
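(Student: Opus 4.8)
The plan is to control $\|\vec\eta\|_{H^1\times L^2}^2$ from above by the quadratic form $\big\langle S_\lambda''(\overrightarrow{\Phi_\lambda})\vec\eta,\vec\eta\big\rangle$ via coercivity, and then to evaluate that quadratic form by recognizing it, up to a cubic remainder, as a difference of values of the \emph{conserved} functional $S_\lambda=E+\lambda Q$. First I would observe that the orthogonality conditions \eqref{orthlambda} produced by the modulation in Proposition \ref{prop:modulation} are exactly those needed to apply the analogue of Corollary \ref{cor:orth2} at the modulated frequency $\lambda$, so that
\[
\big\langle S_\lambda''(\overrightarrow{\Phi_\lambda})\vec\eta,\vec\eta\big\rangle\gtrsim\|\vec\eta\|_{H^1\times L^2}^2,
\]
with a coercivity constant uniform for $\lambda$ close to $\omega_c$ (recall $|\lambda-\omega|\lesssim\varepsilon$ by \eqref{moreover}). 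It therefore suffices to bound $\big\langle S_\lambda''(\overrightarrow{\Phi_\lambda})\vec\eta,\vec\eta\big\rangle$ from above by $O(a|\lambda-\omega|+a^2)+o((\lambda-\omega)^2)$.

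Next, since $\overrightarrow{\Phi_\lambda}$ is a critical point of $S_\lambda$, i.e.\ $S_\lambda'(\overrightarrow{\Phi_\lambda})=\vec 0$, a second-order Taylor expansion gives
\[
S_\lambda\big(\overrightarrow{\Phi_\lambda}+\vec\eta\big)-S_\lambda\big(\overrightarrow{\Phi_\lambda}\big)=\tfrac12\big\langle S_\lambda''(\overrightarrow{\Phi_\lambda})\vec\eta,\vec\eta\big\rangle+o\big(\|\vec\eta\|_{H^1\times L^2}^2\big).
\]
By the definition \eqref{change}, $\overrightarrow{\Phi_\lambda}+\vec\eta=\vec u(t,\cdot+y(t))$ is a spatial translate of $\vec u(t)$; because $E$ and $Q$ are translation invariant and conserved, $S_\lambda\big(\overrightarrow{\Phi_\lambda}+\vec\eta\big)=E(\vec u_0)+\lambda Q(\vec u_0)=S_\lambda(\vec u_0)$ (with $\lambda=\lambda(t)$). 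Hence
\[
\tfrac12\big\langle S_\lambda''(\overrightarrow{\Phi_\lambda})\vec\eta,\vec\eta\big\rangle=\big[S_\lambda(\vec u_0)-S_\lambda\big(\overrightarrow{\Phi_\lambda}\big)\big]+o\big(\|\vec\eta\|_{H^1\times L^2}^2\big).
\]

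I then split the difference as $S_\lambda(\vec u_0)-S_\lambda(\overrightarrow{\Phi_\lambda})=\big[S_\lambda(\vec u_0)-S_\lambda(\overrightarrow{\Phi_\omega})\big]-\big[S_\lambda(\overrightarrow{\Phi_\lambda})-S_\lambda(\overrightarrow{\Phi_\omega})\big]$. The second bracket is $o\big((\lambda-\omega)^2\big)$ directly by Corollary \ref{SS}. For the first bracket, writing $S_\lambda=E+\lambda Q$ and inserting the expansions \eqref{E-E} and \eqref{Q-Q} yields
\[
S_\lambda(\vec u_0)-S_\lambda\big(\overrightarrow{\Phi_\omega}\big)=2a\omega^2\|\phi_\omega\|_{L^2}^2-2a\lambda\omega\|\phi_\omega\|_{L^2}^2+O(a^2)=2a\omega\|\phi_\omega\|_{L^2}^2(\omega-\lambda)+O(a^2),
\]
which is $O\big(a|\lambda-\omega|+a^2\big)$. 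Combining the two brackets and absorbing the $o\big(\|\vec\eta\|_{H^1\times L^2}^2\big)$ remainder into the coercive lower bound gives the desired estimate.

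The two delicate points are the following. First, the coercivity must be used at $\lambda\neq\omega_c$: the proof of Proposition \ref{prop:orth1} relies on the negativity $\big\langle S_\omega''(\overrightarrow{\Phi_\omega})\vec\psi_\omega,\vec\psi_\omega\big\rangle<0$ from Lemma \ref{lem:S''=Psi}, and the analogous quantity at $\lambda$ equals $-\tfrac{1}{4\lambda^2}\partial_\lambda Q(\overrightarrow{\Phi_\lambda})-\tfrac{1}{4\lambda^2}\|\phi_\lambda\|_{L^2}^2$; since $\partial_\lambda Q(\overrightarrow{\Phi_\lambda})=O(\lambda-\omega)$ is small near $\omega_c$ by Lemma \ref{lem:partialQ}, this remains negative and the coercivity constant stays bounded below uniformly, so Corollary \ref{cor:orth2} carries over verbatim with $\omega$ replaced by $\lambda$. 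Second, one must verify that the Taylor remainder of the potential term $\tfrac{1}{p+2}\int|u|^{p+2}\dx$ is genuinely $o\big(\|\vec\eta\|_{H^1\times L^2}^2\big)$ even for small $p$; here the one-dimensional embedding $H^1\hookrightarrow L^\infty$ gives $\int|\xi|^{p+2}\dx\lesssim\|\xi\|_{L^\infty}^p\|\xi\|_{L^2}^2\lesssim\|\xi\|_{H^1}^{p+2}=o\big(\|\xi\|_{H^1}^2\big)$, and the cross terms against the exponentially decaying $\phi_\lambda$ are handled the same way, so no regularity restriction on $p$ is needed. This second estimate is the main obstacle, being the place where the low regularity of the nonlinearity could otherwise enter.
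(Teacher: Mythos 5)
Your proposal is correct and follows essentially the same route as the paper: coercivity of $S_\lambda''\big(\overrightarrow{\Phi_\lambda}\big)$ under the modulation orthogonality conditions, the second-order Taylor expansion using $S_\lambda'\big(\overrightarrow{\Phi_\lambda}\big)=\vec 0$ together with conservation and translation invariance to identify $S_\lambda(\vec u)$ with $S_\lambda(\vec u_0)$, the same splitting of $S_\lambda(\vec u_0)-S_\lambda\big(\overrightarrow{\Phi_\lambda}\big)$ through $\overrightarrow{\Phi_\omega}$, and Corollary \ref{SS} for the soliton-difference term. The only differences are cosmetic (you evaluate the first bracket via \eqref{E-E}--\eqref{Q-Q} rather than directly from $S_\omega'\big(\overrightarrow{\Phi_\omega}\big)=\vec 0$), and you additionally justify two points the paper leaves implicit, namely the uniformity of the coercivity constant for $\lambda$ near $\omega_c$ and the $o\big(\|\vec\eta\|_{H^1\times L^2}^2\big)$ Taylor remainder of the nonlinear term when $0<p<1$.
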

\begin{proof}
First, since $\vec u=\Big(\overrightarrow{\Phi_\lambda}+\vec \eta\Big)(x-y)$ in \eqref{uv}, we have
\begin{align*}
S_\lambda(\vec u)-S_\lambda\big(\overrightarrow{\Phi_\lambda}\big)=&\Big\langle S_\lambda'\big(\overrightarrow{\Phi_\lambda}\big),\vec\eta\Big\rangle+\frac{1}{2}\Big\langle S_\lambda''\big(\overrightarrow{\Phi_\lambda}\big)\vec\eta,\vec\eta\Big\rangle+o(\|\vec\eta\|_{H^1\times L^2}^2).
\end{align*}
Using $S_\omega'\big(\overrightarrow{\Phi_\omega}\big)=\vec0$ and Taylor's type extension, we have
\begin{align*}
S_\lambda(\vec u)-S_\lambda\big(\overrightarrow{\Phi_\lambda}\big)=&\frac{1}{2}\Big\langle S_\lambda''\big(\overrightarrow{\Phi_\lambda}\big)\vec\eta,\vec\eta\Big\rangle+o(\|\vec\eta\|_{H^1\times L^2}^2).
\end{align*}
Then by the estimate \eqref{S''xi} in Corollary \ref{cor:orth2}, we get
\begin{align*}
S_\lambda(\vec u)-S_\lambda\big(\overrightarrow{\Phi_\lambda}\big)\gtrsim\|\vec\eta\|_{H^1\times L^2}^2.
\end{align*}
Second, note that
\begin{align*}
S_\lambda(\vec u)-S_\lambda\big(\overrightarrow{\Phi_\lambda}\big)
=S_\lambda(\vec u_0)-S_\lambda\big(\overrightarrow{\Phi_\omega}\big)
+S_\lambda\big(\overrightarrow{\Phi_\omega}\big)-S_\lambda\big(\overrightarrow{\Phi_\lambda}\big),
\end{align*}
and Taylor's type extension gives
\begin{align*}
S_\lambda(\vec u_0)-S_\lambda\big(\overrightarrow{\Phi_\omega}\big)
&=E(\vec u_0)-E\big(\overrightarrow{\Phi_\omega}\big)+\lambda\Big(Q(\vec u_0)-Q\big(\overrightarrow{\Phi_\omega}\big)\Big)\\
&=S_\omega(\vec u_0)-S_\omega\big(\overrightarrow{\Phi_\omega}\big)+(\lambda-\omega)\Big(Q(\vec u_0)-Q\big(\overrightarrow{\Phi_\omega}\big)\Big)\\
&=\Big\langle S_\omega'\big(\overrightarrow{\Phi_\omega}\big),\vec u_0-\overrightarrow{\Phi_\omega}\Big\rangle+O\Big(\|\vec u_0-\overrightarrow{\Phi_\omega}\|_{H^1\times L^2}^2\Big)+(\lambda-\omega)O\Big(\|\vec u_0-\overrightarrow{\Phi_\omega}\|_{H^1\times L^2}\Big)\\
&=O(a^2+a|\lambda-\omega|).
\end{align*}
By Corollary \ref{SS}, we have
\begin{align*}
S_\lambda\big(\overrightarrow{\Phi_\omega}\big)-S_\lambda\big(\overrightarrow{\Phi_\lambda}\big)=o((\lambda-\omega)^2).
\end{align*}
Finally, we get the desired result:
\begin{align*}
\|\vec\eta\|_{H^1\times L^2}^2&\lesssim S_\lambda(\vec u)-S_\lambda\big(\overrightarrow{\Phi_\lambda}\big)
=S_\lambda(\vec u_0)-S_\lambda\big(\overrightarrow{\Phi_\omega}\big)
+S_\lambda\big(\overrightarrow{\Phi_\omega}\big)-S_\lambda\big(\overrightarrow{\Phi_\lambda}\big)\\
&=O(a|\lambda-\omega|+a^2)+o\big((\lambda-\omega)^2\big).
\end{align*}
This completes the proof.
\end{proof}

\subsection{Proof of Theorem \ref{thm:main}}
As in the discussion above, we assume that $\vec u\in U_\varepsilon\big(\overrightarrow{\Phi_\omega}\big)$, and thus $|\lambda-\omega|\lesssim \varepsilon$. First, we note that from the definition of $I(t)$ and Young's inequality, we have the time uniform boundedness of $I(t)$:
\begin{align}
\sup\limits_{t\in\R} I(t)
\lesssim R\Big(   \|\overrightarrow{\Phi_\omega}\|_{H^1\times L^2}^2+1      \Big).\label{supI}
\end{align}
Now we estimate on $I'(t)$. From \eqref{estimateI't} and Lemma \ref{lem:uppercontrol},
\begin{align*}
I'(t)&\ge C_1a+C_2(\lambda-\omega)^2+O(\|\vec \eta\|_{H^1\times L^2}^2)+O\Big(a(\lambda-\omega)^2+\frac{1}{R}\Big)\\
&\ge \frac{1}{2}C_1a+C_2(\lambda-\omega)^2+O(a|\lambda-\omega|+a^2)+o\big((\lambda-\omega)^2\big)+O\Big(\frac 1R\Big).
\end{align*}
By \eqref{moreover}, choosing R satisfying $\frac 1R\leq a^2,$ and choosing $\varepsilon$, $a_0$ small enough, we obtain that for any $a\in (0,a_0)$,
\begin{align*}
  I'(t)&\ge \frac{1}{2}C_1a+C_2(\lambda-\omega)^2+O(a^2+a|\lambda-\omega|)+o(\lambda-\omega)^2\\
  &\ge\frac14C_1a+\frac 12C_2(\lambda-\omega)^2.
\end{align*}
This implies $I(t)\to +\infty$ when $t\to +\infty$, which is contradicted with (\ref{supI}).
Hence we prove the instability of the solitary wave $\phi_\omega(x-\omega t)$ and thus give the proof of Theorem \ref{thm:main}.

\section*{Acknowledgements}
The authors are grateful to the associate editor who gave useful notes on grammatical/typographical errors  and to the anonymous referees who carefully read the paper and gave many helpful comments and suggestions.

\end{document}